\documentclass[11pt]{article}
\usepackage{amssymb,amsmath,epsfig,amsthm}
\usepackage{fullpage,multirow}
\usepackage{tabu}
\usepackage{slashbox}

\usepackage{graphicx}
\usepackage{tikz}
\tikzset{partition/.style={fill,circle,inner sep=1pt}}
\usetikzlibrary{positioning}
\usetikzlibrary{decorations.pathreplacing}
\usetikzlibrary{matrix,arrows}
\usetikzlibrary{calc}
\tikzset{partition/.style={fill,circle,inner sep=1pt},
         part/.style={baseline=0,scale=0.5,bend left=45},
         partlabel/.style={below}}
\usetikzlibrary{shapes,arrows}
\tikzstyle{pnt}=[draw,ellipse,fill,inner sep=1pt]
\tikzstyle{opnt}=[ ]
\tikzstyle{pntt}=[draw,ellipse,fill,inner sep=0.5pt]
\tikzstyle{point}=[draw,ellipse,fill,inner sep=2pt]
\usepackage{color}

\newtheorem{theorem}{Theorem}[section]
\newtheorem{prop}[theorem]{Proposition}
\newtheorem{lemma}[theorem]{Lemma}

\newtheorem{question}[theorem]{Question}
\theoremstyle{definition}

\newtheorem{definition}{Definition}

\newcommand\beq{\begin{equation}}
\newcommand\eeq{\end{equation}}
\newcommand\bce{\begin{center}}
\newcommand\ece{\end{center}}
\newcommand\bea{\begin{eqnarray}}
\newcommand\eea{\end{eqnarray}}
\newcommand\bean{\begin{eqnarray*}}
\newcommand\eean{\end{eqnarray*}}
\newcommand\bmt{\begin{multline*}}
\newcommand\emt{\end{multline*}}
\newcommand\ben{\begin{enumerate}}
\newcommand\een{\end{enumerate}}
\newcommand\bit{\begin{itemize}}
\newcommand\eit{\end{itemize}}
\newcommand\brr{\begin{array}}
\newcommand\err{\end{array}}
\newcommand\bt{\begin{tabular}}
\newcommand\et{\end{tabular}}
\newcommand\ba{\begin{array}}
\newcommand\ea{\end{array}}

\newcommand\ms{\medskip}

\renewcommand\S{\mathcal S}
\renewcommand\L{\mathcal L}

\newcommand\Lu{{\mathcal L}^\times}

\newcommand\K{\mathcal K}
\newcommand\M{\mathcal M}
\renewcommand\P{\mathcal P}
\newcommand\F{\mathcal F}
\newcommand\E{\mathcal E}
\newcommand\A{\mathcal A}
\newcommand\B{\mathcal B}
\def \R{\mathcal{R}}
\def \D{\mathcal{D}}
\def \set #1{\{#1\}}

\DeclareMathOperator{\val}{val}
\DeclareMathOperator{\peak}{peak}
\newcommand{\hgt}{h}

\date{}

\begin{document}
\title{Pattern avoidance in matchings and partitions}
\author{Jonathan Bloom and Sergi Elizalde~\thanks{Department of Mathematics,
Dartmouth College, Hanover, NH 03755.}}
\maketitle

\begin{abstract}
Extending the notion of pattern avoidance in permutations, we study matchings and set partitions whose arc diagram representation avoids a given configuration of three arcs.
These configurations, which generalize $3$-crossings and $3$-nestings, have an interpretation, in the case of matchings, in terms of patterns in full rook placements on Ferrers boards.

We enumerate $312$-avoiding matchings and partitions, obtaining algebraic generating functions, in contrast with the known D-finite generating functions for the $321$-avoiding (i.e., $3$-noncrossing) case.
Our approach also provides a more direct proof of a formula of B\'ona for the number of $1342$-avoiding permutations.
Additionally, we give a bijection proving the shape-Wilf-equivalence of the patterns $321$ and $213$ which greatly simplifies existing proofs by Backelin--West--Xin and Jel\'{\i}nek,
and provides an extension of work of Gouyou-Beauchamps for matchings with fixed points.
Finally, we classify pairs of patterns of length 3 according to shape-Wilf-equivalence, and enumerate matchings and partitions avoiding a pair in most of the resulting equivalence classes.

\end{abstract}

\section{Introduction}

Pattern avoidance in matchings is a natural extension of pattern avoidance in permutations. Indeed, a permutation of $[n]=\{1,2,\dots,n\}$ can be thought of as matching
of $[2n]$ where each element of $[n]$ is paired up with an element of $[2n]\setminus[n]$.
The natural translation of the definition of patterns in permutations to this type of matchings
extends to all perfect matchings, and more generally, to set partitions ---which, when all the blocks have size 2, are just perfect matchings.
We will use the term matching to refer to a perfect matching, when it creates no confusion.
On the other hand, the well-studied notions of $k$-crossings and $k$-nestings in matchings and set partitions,
in our language, are simply occurrences of the patterns $k\dots 21$ and $12\dots k$, respectively.
Additionally, by viewing matchings as certain fillings of Ferrers boards, patterns in matchings relate to patterns in Ferrers boards, and thus to the concept of shape-Wilf-equivalence of permutations.

Motivated by these connections and by the recent work on crossings, nestings, permutation patterns, and shape-Wilf-equivalence, we study matchings and partitions that avoid patterns of length 3.
We consolidate and simplify recent work on the classification of these patterns, and we obtain new results on the enumeration of matchings and partitions that avoid some of these patterns.

\subsection{Previous work}

Given $2n$ points on a horizontal line, labeled increasingly from left to right, we represent a matching of $[2n]$
by drawing $n$ arcs between pairs of points. If $i<j<k<\ell$, two arcs $(i,k)$, $(j,\ell)$ form a crossing, and two arcs $(i,\ell)$, $(j,k)$ form a nesting.
Similarly, a partition of $[n]$ is represented by drawing, for each block $\{i_1,i_2,\dots,i_a\}$ of size $a$ with $i_1<i_2<\dots<i_a$, $a-1$ arcs $(i_1,i_2),(i_2,i_3),\dots,(i_{a-1},i_a)$.
A crossing in the partition is then a pair of arcs $(i,k)$, $(j,\ell)$, and a nesting is a pair of arcs $(i,\ell)$, $(j,k)$, where $i<j<k<\ell$.

Crossings and nestings in matchings and partitions have been studied for decades.
It is well known that the number of perfect matchings on $[2n]$ with no crossings (or with no nestings)
is the $n$-th Catalan number $C_n$,
which also equals the number of partitions of $[n]$ of with no crossings, and the number of those with no nestings.

More generally, attention has focused on the study of $k$-crossings ($k$-nestings), which are sets of $k$ pairwise crossing (respectively, nesting) arcs.
For set partitions, the above definition, which we use throughout the paper, is the same given by Chen, Deng, Du, Stanley and Yan~\cite{CDDSY} and Krattenthaler~\cite{Kra06}. However,
we point out that different definitions of pattern avoidance for partitions have been introduced by Klazar~\cite{Kla} and Sagan~\cite{Sag}, studied also in~\cite{ManSev,JelMan,Goyt}.

Touchard~\cite{Tou} and Riordan~\cite{Rio} considered the distribution of $2$-crossings on matchings, which was shown~\cite{SC} to be equal to the distribution of $2$-nestings.
The number of $3$-nonnesting matchings of $[2n]$ (viewed as fixed-point-free involutions with no decreasing sequence of length 6) was found by Gouyou-Beauchamps~\cite{Gou},
who recursively constructed a bijection onto pairs of noncrossing Dyck paths with $2n$ steps, counted by $C_nC_{n+2}-C_{n+1}^2$.
More recently, Chen et al.~\cite{CDDSY} showed that, if we define the crossing (nesting) number of a matching as the maximum $k$ such that it contains a $k$-crossing (resp. $k$-nesting), then the
crossing number and the nesting number have a symmetric joint distribution on the set of all matchings of $[2n]$. In particular, the number of $k$-noncrossing matchings
(i.e., containing no $k$-crossing) of $[2n]$ equals the number of $k$-nonnesting (i.e., containing no $k$-nesting) ones, for all $k$.
They show that the analogous results hold for partitions as well. Their proof, which uses vacillating tableaux and a variation of Robinson-Schensted insertion and deletion, also provides a bijection
between $k$-noncrossing matchings and certain $(k-1)$-dimensional closed lattice walks, from where a determinant formula for the generating function in terms of hyperbolic Bessel functions follows.

Less is known about the enumeration of $k$-noncrossing set partitions. Bousquet-M\'elou and
Xin~\cite{BMXin} settled the case $k=3$ using a bijection into lattice paths to derive a functional equation for the generating function, which then is solved by the kernel method. They showed that the generating
function for $3$-noncrossing set partitions is D-finite, that is, it satisfies a linear differential equation with polynomial coefficients.
This is conjectured not to be the case for $k>3$.
For $k$-nonnesting set partitions, additional functional equations for the generating functions have been obtained by Burrill et al.~\cite{BEMY} using generating trees for open arc diagrams.

By interpreting matchings and partitions as rook placements on Ferrers boards and using the growth diagram construction of Fomin~\cite{Fom86},
Krattenthaler~\cite{Kra06} gave a simpler description of the bijections in~\cite{CDDSY} proving the symmetry of crossing and nesting number on matchings
and partitions. He extended the results to fillings of Ferrers boards with nonnegative integers. Other extensions have been given by de Mier~\cite{Mier} to fillings with prescribed row and column sums.

As mentioned before, $k$-crossings (respectively, $k$-nestings) in matchings have a simple interpretation as occurrences of the monotone decreasing (respectively, increasing) pattern of length $k$.
In this paper we study and enumerate matchings that avoid other patterns of length~3, and in some cases, we extend our results to the enumeration of pattern-avoiding partitions.
The translation of crossings and nestings to the language of permutation patterns becomes natural via a bijection between matchings and certain fillings of Ferrers boards, called full rook placements, described in Section~\ref{sec:matchings}.
For such fillings, the definitions of pattern containment and avoidance in permutations generalize routinely, and they have been widely studied in the literature.
In this setting, Stankova and West~\cite{SW} introduced the concept of shape-Wilf-equivalence, and they showed that the patterns $231$ and $312$ are shape-Wilf-equivalent. A simpler proof of this fact was later given by Bloom and Saracino~\cite{BloSar}.
As we will see, if two patterns are shape-Wilf-equivalent, then the number of matchings avoiding one is the same as the number of those avoiding the other, and the same is true for partitions.
Backelin, West and Xin~\cite{BWX} showed that $12\dots k$ and $k\dots21$ are shape-Wilf-equivalent. A more direct proof of their result, which implies again that
$k$-nonnesting and $k$-noncrossing matchings are equinumerous, was given by Krattenthaler~\cite{Kra06}.
It also follows from~\cite{BWX} that $123$ and $213$ are shape-Wilf-equivalent. Thus, there are three shape-Wilf-equivalence classes of patterns of length 3, namely $123\sim321\sim213$, $231\sim312$, and $132$.

Jel\'{\i}nek~\cite{Jel} reproved some of these results independently in the context of matchings, by giving
bijections between $231$-avoiding matchings and $312$-avoiding ones, and
between $213$-avoiding matchings and $123$-avoiding (i.e. $3$-nonnesting) ones.

Finally, let us mention that Stankova~\cite{Stankova} compared, for each one of the three shape-Wilf-equivalence classes of patterns of length 3,
the number of full rook placements on any given Ferrers board avoiding each a pattern in the class. She showed that the number of $231$-avoiding placements is no larger than the number of $321$-avoiding placements
(this is also proved in~\cite{Jel}), which is in turn no larger than the number of $132$-avoiding ones.

\subsection{Structure of the paper}

In Section~\ref{sec:defs} we define patterns in matchings, in set partitions, and in rook placements on Ferrers boards,
and we set the notation for the rest of the paper.
In Sections \ref{sec:123}, \ref{sec:231} and~\ref{sec:132} we study each one of
the three shape-Wilf-equivalence classes of patterns of length~3.
In Section~\ref{sec:123} we give a new simple bijection between $123$-avoiding matchings and $213$-avoiding ones, 
as well as a generalization to matchings with fixed points (i.e., not necessarily perfect) which relates to the  main result from~\cite{Gou}.
In Section~\ref{sec:231} we enumerate $231$-avoiding (equivalently, $312$-avoiding) matchings and partitions, and we show that their generating functions are algebraic, in contrast to the case of $123$-avoiding matchings~\cite{Gou} and partitions~\cite{BMXin}. We then use our techniques for matchings to obtain a new proof of B\'ona's formula for the generating function for $1342$-avoiding permutations~\cite{Bona}.
In Section~\ref{sec:132} we discuss $132$-avoiding matchings, for which no enumeration formula is known, and we argue that counting them
is closely related to counting $1324$-avoiding permutations, which is an outstanding open problem.
Finally, in Section~\ref{sec:double} we enumerate matchings and partitions that avoid pairs of patterns of length~3.

\section{Matchings, partitions, and rook placements}\label{sec:defs}

\subsection{Ferrers boards}\label{sec:Ferrers}

A \emph{Ferrers board} is a left-justified array of unit squares so that the number of squares in each row is less than or equal to the number of squares in the row below.
To be precise, consider an $n\times n$ array of unit squares in the $xy$-plane, whose bottom left corner is at the origin $(0,0)$. The vertices of the unit squares are lattice points in $\mathbb{Z}^2$.
For any vertex $V=(a,b)$, let $\Gamma(V)$ be the set of unit squares inside the rectangle $[0,a]\times[0,b]$.
Then, a subset $F$ of the $n\times n$ array with the property that $\Gamma(V)\subseteq F$ for each vertex in $F$ is a Ferrers board. Equivalently, $F$ is bounded by the coordinate lines and by a lattice path from $(0,n)$ to $(n,0)$ with east steps $(1,0)$ and south steps $(0,-1)$. We call this path the \emph{border} of $F$, and we denote its vertices by
$V_0, \ldots, V_{2n}$, where $V_0=(0,n)$, $V_n=(n,0)$ and $V_{i+1}$ is immediately below or to the right of $V_i$.

\begin{definition}
A \emph{full rook placement} is a pair $(R,F)$ where $F$ is a Ferrers board and $R$ is a subset of squares of $F$ (marked by placing a rook in each one of them) such that each row and each column of $F$ contains exactly one rook.
Let $\R_F$ denote the set of full rook placements on $F$.
\end{definition}

Figure~\ref{fig:RookPlacement} gives an example of a full rook placement, where rooks are indicated by the symbol~$\times$.
In this paper, the term {\em placement} will always refer to a full rook placement.
For a Ferrers board $F$ to admit a full rook placement, the number or non-empty rows must equal the number
of non-empty columns, and the coordinates $(x,y)$ of the vertices in the border of $F$ must satisfy $x\ge y$.
We denote by $\F_n$ the set of Ferrers boards satisfying this condition and having $n$ non-empty rows and columns.
The border of $F\in\F_n$, which we denote by $D_F$, is a lattice path from $(0,n)$ to $(n,0)$ with steps east and south that remains above the line $y = n-x$.
We denote by $\D_n$ the set of such paths, which we call {\em Dyck paths} of semilength $n$ (despite being rotated from other standard ways of drawing them).
The map $F\mapsto D_F$ is a trivial bijection between $\F_n$ and $\D_n$.  A \emph{peak} on a Dyck path $D$ is an occurrence of an east step immediately followed by a south step.  Likewise, a \emph{valley} is an occurrence of a south step immediately followed by an east step.  We write $\peak(D)$ and $\val(D)$ for the number of peaks and valleys on $D$ respectively, and note that $\peak(D)=\val(D)+1$ for $D\in\D_n$ with $n\ge1$.
Analogously, we define a \emph{peak} (respectively, a \emph{valley}) on $F\in \F_n$ to be a vertex $V_i$  that is above (respectively, to the right) $V_{i+1}$ and to the left of (respectively, below) $V_{i-1}$. We write $\peak(F)$ and $\val(F)$ to denote the number of peaks and valley on $F$ respectively. Clearly, $\peak(F)=\peak(D_F)$ and $\val(F)=\val(D_F)$.

We let
$$\R_n = \bigcup_{F\in\F_n} \R_F.$$ 
be the set of all placements on boards in $\F_n$.
Denote by $\S_n$ the set of permutations of $\{1,2,\dots,n\}$. To each full rook placement $(R,F)$ where $F\in\F_n$, one can associate a permutation $\pi_R\in\S_n$
by letting $\pi_R(i)=j$ if $R$ has a rook in column $i$ and row $j$ (our convention here is to number the columns of $F$ from left to right and its rows from bottom to top, as in the usual cartesian coordinates).
In the case that $F\in\F_n$ is the  square Ferrers board, this map is a bijection between full rook placements on $F$ and $\S_n$.
More generally, given a vertex $V$ of the border of $F$, the restriction of the placement $R$ to the rectangle $\Gamma(V)$, which consits of the squares $R\,\cap\,\Gamma(V)$,
determines a unique permutation in $\S_k$, where $k=|R\,\cap\,\Gamma(V)|$. This permutation is obtained
by disregarding empty rows and columns, and then applying the above map. Under this correspondence it makes sense to consider concepts such as the longest increasing sequence in $R\,\cap\,\Gamma(V)$.

\begin{figure}[h!]
\begin{center}
\begin{tikzpicture}[scale=0.4]

\begin{scope}[shift={(0,0)}]
\draw[fill=gray!20] (0,0) rectangle (5,6);

\draw (0,0) -- (0,8);
\draw (1,0) -- (1,8);
\draw (2,0) -- (2,8);
\draw (3,0) -- (3,8);
\draw (4,0) -- (4,7);
\draw (5,0) -- (5,6);
\draw (6,0) -- (6,5);
\draw (7,0) -- (7,5);
\draw (8,0) -- (8,4);

\draw (0,0) -- (8,0);
\draw (0,1) -- (8,1);
\draw (0,2) -- (8,2);
\draw (0,3) -- (8,3);
\draw (0,4) -- (8,4);
\draw (0,5) -- (7,5);
\draw (0,6) -- (5,6);
\draw (0,7) -- (4,7);
\draw (0,8) -- (3,8);

\node at (0.5,0.5) {$\times$};
\node at (1.5,6.5) {$\times$};
\node at (2.5,7.5) {$\times$};
\node at (3.5,5.5) {$\times$};
\node at (4.5,2.5) {$\times$};
\node at (5.5,1.5) {$\times$};
\node at (6.5,4.5) {$\times$};
\node at (7.5,3.5) {$\times$};
\node at (6.5,7.5) {$V$};
\draw[->] (6.1,7.1) -- (5.1,6.1);
\draw[line width = .5mm] (0,8) -- (3,8) --(3,7) -- (4,7) -- (4,6) -- (5,6) -- (5,5) -- (7,5) -- (7,4) -- (8,4) -- (8,0);
\end{scope}
\end{tikzpicture}
\caption{A rook placement $(R,F)$ with $F\in\F_8$ and $\pi_R = 17863254$. For the selected vertex $V$ on the border (the thicker path), $\Gamma(V)$ is the shaded rectangle and $R\,\cap\, \Gamma(V)$ determines the permutation $132$.}
\label{fig:RookPlacement}
\end{center}
\end{figure}
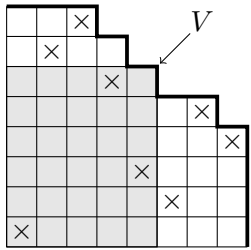

Recall that a permutation $\pi\in\S_n$ \emph{avoids} another permutation $\tau\in\S_k$ (usually called a \emph{pattern}) if there is no subsequence $\pi(i_1)\dots\pi(i_k)$ with $i_1<\dots<i_k$ that is order-isomorphic to $\tau(1)\dots\tau(k)$.
The number of $\tau$-avoiding permutations in $\S_n$ is denoted by $\S_n(\tau)$.
Viewing permutations as full rook placements on the square Ferrers board, $\pi$ avoids $\tau$ if the placement corresponding to $\tau$ cannot be obtained from the
placement corresponding to $\pi$ by removing rows and columns. This definition has been generalized \cite{BWX} to rook placements as follows.

\begin{definition}\label{def:avoidanceF}
A full rook placement $(R,F)$ \emph{avoids} $\tau\in\S_k$ if and only if for every vertex $V$ on the border of $F$, the permutation given by $R\,\cap\,\Gamma(V)$ avoids $\tau$. Let $\R_F(\tau)$ be the set of full rook placements on $F$ that avoid $\tau$. Similarly, let
$$\R_n(\tau)= \bigcup_{F\in\F_n} \R_F(\tau).$$
\end{definition}

\begin{definition}
Two patterns $\sigma$ and $\tau$ are said to be \emph{shape-Wilf-equivalent}, denoted $\sigma\sim\tau$, if for any Ferrers board $F$ we have $|\R_F(\sigma)|=|\R_F(\tau)|$.
\end{definition}

Clearly, if two patterns are shape-Wilf-equivalent, then they are also Wilf-equivalent, meaning that they are avoided by the same number of {\em permutations}.
The converse is not true, as shown by the fact that there is one Wilf-equivalence class for patterns of length 3, but three shape-Wilf-equivalence classes:
$123\sim321\sim213$ (see~\cite{BWX,Kra06}), $231\sim312$ (see~\cite{SW,Jel,BloSar}), and $132$.

We point out that there are two definitions of shape-Wilf-equivalence in the literature, one of which is obtained by complementation of the other.
This arises from the fact that Ferrers boards can be drawn in French notation or English notation, depending on whether column widths weakly decrease from bottom to top or from top to bottom, respectively,
and also the fact that entry $(i,\pi(i))$ can describe cartesian coordinates ($i$-th column from the left, $\pi(i)$-th row from the bottom)
or matrix coordinates ($i$-th row from the top, $\pi(i)$-th column from the left). Our convention of using French notation and cartesian coordinates gives the same definition
of shape-Wilf-equivalence from~\cite{BWX}, which uses English notation and matrix coordinates, and~\cite{BMSte,BloSar}, which use the same conventions as in this paper.
However, the definition from~\cite{SW,Stankova,Jel}, which uses English notation and cartesian coordinates, would give the complemented shape-Wilf-equivalence classes $321\sim123\sim231$, $213\sim132$, and $312$.

Regarding shape-Wilf-Equivalence of patterns of arbitrary length, two important results are due to Backelin, West and Xin~\cite{BWX}. One states that $12\dots k\sim k\dots 21$ for all $k$, and the other one is the following.

\begin{prop}[\cite{BWX}]
Let $\sigma,\tau\in\S_k$ and $\rho\in \S_\ell$. If $\sigma\sim\tau$, then $\sigma\rho'\sim \tau\rho'$, where $\rho'$ is obtained from $\rho$ by adding $k$ to each of its entries.
\end{prop}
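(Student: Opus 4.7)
The plan is to build, for each Ferrers board $F$, a bijection
\[
\Phi_F\colon \R_F(\sigma\rho')\to \R_F(\tau\rho')
\]
by applying the given $\sigma\sim\tau$ bijection inside a canonically chosen sub-Ferrers-board. The starting observation, which unpacks the structure of $\sigma\rho'$, is that since every entry of $\rho'$ exceeds every entry of $\sigma$, a placement $R$ on $F$ contains $\sigma\rho'$ if and only if there exists a $\rho'$-occurrence $W$ in $R$ (with leftmost column $c_{\min}(W)$ and lowest row $r_{\min}(W)$) together with a $\sigma$-occurrence among the rooks of $R$ that lie in the south-west rectangle
\[
L(W)\;=\;F\cap\bigl([1,c_{\min}(W)-1]\times[1,r_{\min}(W)-1]\bigr),
\]
which is itself a Ferrers board.

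The heart of the argument is to select, for each $R\in\R_F$ that contains $\rho'$, a canonical $\rho'$-occurrence $W(R)$ enjoying two properties: (a) $R$ contains $\sigma\rho'$ if and only if the sub-placement $R\cap L(W(R))$, viewed as a full rook placement on the Ferrers board $L(W(R))$, contains $\sigma$; and (b) $W(R)$ is determined by the rooks of $R$ outside $L(W(R))$, so that modifying only the sub-placement inside $L(W(R))$ leaves $W(R)$ unchanged. Property (b) is automatic as soon as the defining rule for $W(R)$ consults only rooks with column $\ge c_{\min}(W(R))$ or row $\ge r_{\min}(W(R))$, since the rooks of $W(R)$ themselves lie outside $L(W(R))$. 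Granted (a) and (b), the bijection is immediate: replace $R\cap L(W(R))$ by its image under the $\sigma\sim\tau$ bijection on $L(W(R))$ and keep the rooks outside fixed; by (b) the image $R'$ still satisfies $W(R')=W(R)$, and by (a) applied with $\tau$ in place of $\sigma$ we obtain $R'\in\R_F(\tau\rho')$. The inverse uses the inverse of the $\sigma\sim\tau$ bijection on the same sub-board.

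The main obstacle is defining $W(R)$ so that property (a) holds. Distinct $\rho'$-occurrences of $R$ generally yield incomparable south-west rectangles, so no naive rule---``rightmost'', ``topmost'', or lexicographically greatest $(c_{\min},r_{\min})$---is by itself correct, as small examples (already for $\sigma\rho'=123$) readily show: one can arrange that the $\sigma$-witness sits inside $L(W)$ for a $\rho'$-occurrence that maximizes neither the leftmost column nor the lowest row. The correct rule must be coupled to the shape of $\rho$, and should be built iteratively from outside in so that $L(W(R))$ absorbs every potential south-west $\sigma$-witness. If a clean direct construction proves delicate, the fallback is induction on the number of columns of $F$: peel off the rightmost column and its rook, and split according to whether that rook participates in a $\rho'$-occurrence extending to a $\sigma\rho'$-occurrence, reducing to a smaller Ferrers board where the inductive hypothesis together with the shape-Wilf-equivalence $\sigma\sim\tau$ closes the argument.
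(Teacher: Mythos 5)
First, a point of reference: the paper does not prove this proposition at all --- it is quoted from Backelin--West--Xin \cite{BWX} --- so what follows compares your proposal with the argument in that reference rather than with anything in the present paper. Your overall strategy is the right one: isolate the region of $F$ where the $\sigma$-part of a $\sigma\rho'$-occurrence can sit, apply the given bijection $\R_{F'}(\sigma)\to\R_{F'}(\tau)$ inside that region, and fix everything outside; and your property (b) (the region must be determined by the rooks outside it) is exactly the invariance one needs. The gap is property (a), and it is not a delicacy that a cleverer canonical rule can repair: the locus of cells that can carry a $\sigma$-witness is
$$\bigcup_{W}\bigl([1,c_{\min}(W)-1]\times[1,r_{\min}(W)-1]\bigr),$$
the union over \emph{all} occurrences $W$ of the pattern $\rho$ in $R$ (in the sense of Definition~\ref{def:avoidanceF}, i.e.\ fitting inside some $\Gamma(V)$). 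This union is down-closed, hence a Ferrers board, but it is genuinely not a rectangle; two placements can agree on all rooks outside this union --- so that any rule obeying (b) assigns them the same $W(R)$ --- while one has its only $\sigma$-witness under one $\rho$-occurrence and the other has its only witness under an incomparable one. Already for $\sigma\rho'=123$ (so $\sigma=12$, $\rho=1$, and $W$ is a single rook) one can write down two full placements on the $5\times5$ square with the same rooks outside the union for which no single rook's south-west rectangle works for both. So properties (a) and (b) are jointly unsatisfiable for a single rectangle, and the approach as stated cannot be completed.

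The repair, which is the content of the proof in \cite{BWX}, is to take the region to be the union itself. Let $G$ be the Ferrers board of all cells $(i,j)$ such that some $\rho$-occurrence of $R$ lies strictly north-east of $(i,j)$; call a rook \emph{low} if its cell is in $G$ and \emph{high} otherwise, and let $G'$ be $G$ with its empty rows and columns deleted (your phrase ``full rook placement on the Ferrers board $L(W)$'' quietly skips this step: $R\cap L(W)$ is never full on the rectangle $L(W)$). Two lemmas then do the work: (i) every cell of $G$ admits a witnessing $\rho$-occurrence consisting only of high rooks --- take a witness maximizing the minimum of $i+j$ over its rooks; if it contained a low rook at $(i,j)\in G$, a witness of $(i,j)$ would be a strictly more north-easterly witness of the original cell --- so $G$, $G'$ and the high rooks are invariant under rearranging the low rooks within $G'$; and (ii) $R$ contains $\sigma\rho'$ if and only if the low-rook placement, as an element of $\R_{G'}$, contains $\sigma$. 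Applying the $\sigma\sim\tau$ bijection on $G'$ to the low rooks then yields the bijection $\R_F(\sigma\rho')\to\R_F(\tau\rho')$. Your fallback induction on columns does not substitute for this: whether the peeled-off rook participates in a $\sigma\rho'$-occurrence is a global condition on $R$, and the case split you describe yields no controlled recurrence or bijection.
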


Denote by $\D_n^2$ the set of pairs $(D_0,D_1)$ of Dyck paths $D_0,D_1\in\D_n$ such that $D_0$ never goes above $D_1$. We say that $D_0$ and $D_1$ are \emph{noncrossing}, and we call $D_0$ the \emph{bottom path}
and $D_1$ the \emph{top path}.  For any $F\in \F_n$, we denote by $\D_F^2$ the set of pairs $(D_0,D_F)\in \D_n^2$, that is, those where the top path is the border of $F$.

\subsection{Matchings}\label{sec:matchings}

Denote by $\M_n$ the set of perfect matchings on $[2n]$. Recall that a perfect matching is a set of $n$ pairs $(i,j)$, with $i<j$ such that each element in $[2n]$ appears in exactly one of the pairs. If $(i,j)$ is such a pair,
we say that vertices $i$ and $j$ are matched, and we call $i$ an \emph{opener} and $j$ a \emph{closer}.
As mentioned before, we will use the term matching to mean perfect matching.
We represent matchings as arc diagrams as follows: place $2n$ equally spaced points on a horizontal line, numbered from left to right,
and draw an arc between the two vertices of each pair. The picture on the left of Figure~\ref{fig:kappa} corresponds to the matching $(1,6),(2,12),(3,4),(5,7),(8,10),(9,11)$.

The following natural bijection between $\M_n$ and $\R_n$, which we denote $\kappa$, has been used
in~\cite{Mier,Jel}. Given a matching $M\in\M_n$, construct a path from $(0,n)$ to $(n,0)$ by reading the
vertices of $M$ in increasing order, and adding an east step $(1,0)$ for each opener, and a south step $(0,-1)$ for each closer. This path is clearly a Dyck path, so it is the border of a Ferrers board $F\in\F_n$, which we call the
\emph{shape} of $M$. Each column of $F$ is naturally associated to an opener of $M$ (the vertex that produced the east step at the top of the column), and each row is naturally associated to a closer. Now define a full rook $R$ placement on $F$ by placing a rook in the column associated to $i$ and the row associated to $j$ for each matched pair $(i,j)$. An example of the bijection $\kappa:M\mapsto(R,F)$ is given in Figure~\ref{fig:kappa}.

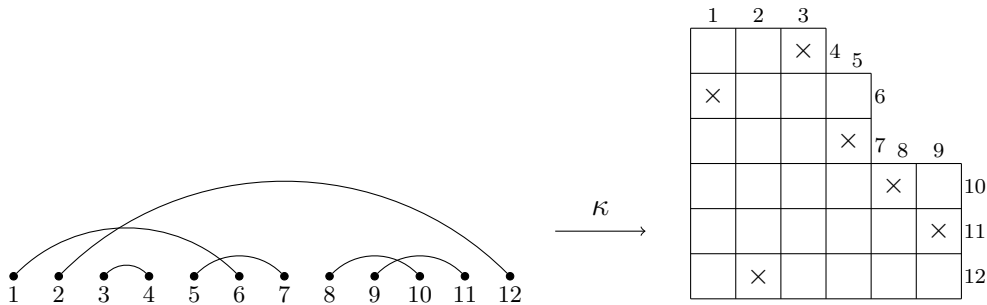
\begin{figure}[h!]
\begin{center}
\begin{tikzpicture}[scale=0.6]
\foreach \i in {0,...,11}
	{
		\node[pnt] at (\i,0)(\i){};
	}
	\foreach \i / \j in {0/1,1/2,2/3, 3/4, 4/5, 5/6, 6/7, 7/8,8/9,9/10,10/11,11/12}
	{
		\draw (\i) node[below] {\footnotesize \j};
	}
\draw(0)  to [bend left=45] (5);
\draw(1)  to [bend left=45] (11);
\draw(2)  to [bend left=45] (3);
\draw(4)  to [bend left=45] (6);
\draw(7)  to [bend left=45] (9);
\draw(8)  to [bend left=45] (10);
\draw[->] (12,1) -- (14,1);
\draw (13,1.5) node {\large $\kappa$};

\begin{scope}[shift={(15,-.5)}]

\foreach \i / \h in {0/6,1/6, 2/6, 3/6, 4/5, 5/3, 6/3}
	{
		\draw (\i, 0) -- (\i, \h);
	}
\foreach \i / \w in {0/6,1/6, 2/6, 3/6, 4/4, 5/4, 6/3}
	{
		\draw (0, \i) -- (\w, \i);
	}
\foreach \i / \w in {0.5/4.5,1.5/0.5, 2.5/5.5, 3.5/3.5, 4.5/2.5, 5.5/1.5}
	{
		\draw (\i, \w) node {{$\times$}};
	}
	
\draw(0.5,6.3) node {\scriptsize 1};
\draw(1.5,6.3) node {\scriptsize 2};
\draw(2.5,6.3) node {\scriptsize 3};
\draw(3.2,5.5) node {\scriptsize 4};
\draw(3.7,5.3) node {\scriptsize 5};
\draw(4.2,4.5) node {\scriptsize 6};
\draw(4.2,3.4) node {\scriptsize 7};
\draw(4.7,3.3) node {\scriptsize 8};
\draw(5.5,3.3) node {\scriptsize 9};
\draw(6.3,2.5) node {\scriptsize 10};
\draw(6.3,1.5) node {\scriptsize 11};
\draw(6.3,0.5) node {\scriptsize 12};

\end{scope}
\end{tikzpicture}
\caption{An example of the bijection $\kappa:\M_{n}\rightarrow\R_n$.}
\label{fig:kappa}
\end{center}
\end{figure}

In light of this bijection, the definition of pattern avoidance in Ferrers boards translates naturally to matchings.

\begin{definition}
We say that a matching $M\in\M_{n}$ {\em avoids} the pattern $\tau\in\S_k$ if the corresponding full rook placement $\kappa(M)$ does.
Equivalently, directly in terms of matchings, $M$ avoids $\tau$ if
there are no $2k$ vertices $1\le i_1 < \ldots < i_{2k} \le n$ such that $M$ contains all the pairs $(i_{a},i_{2k+1-\tau(a)})$ for $1\le a\le k$.

For fixed $F\in\F_n$, denote by $\M_F  = \kappa^{-1}(\R_F)$ the set of matchings of shape $F$, and by
$\M_F(\tau) = \kappa^{-1}(\R_F(\tau))$ those that avoid $\tau$.
Analogously, denote by $\M_n(\tau) = \kappa^{-1}(\R_n(\tau))$ the set of $\tau$-avoiding matchings in $\M_{n}$. Note that
$$\M_n = \bigcup_{F\in\F_n}\; \M_F \qquad\text{and}\qquad \M_n(\tau) = \bigcup_{F\in\F_n}\; \M_F(\tau).$$
\end{definition}

This definition extends the notions of $k$-noncrossing and $k$-nonnesting matchings studied in~\cite{CDDSY,Kra06}.
Recall that a matching is $k$-nonncrossing if it contains no $k$ mutually crossing arcs. In our terminology, this is equivalent to avoiding
the pattern $k\dots21$. Similarly, a matchings is $k$-nonnesting if it contains no $k$ mutually crossing arcs, which is equivalent to
avoiding $12\dots k$.

For patterns $\tau\in\S_3$, which are the focus of this paper, we can describe $\M_n(\tau)$
as the set of matchings $M\in\M_n$ containing no three arcs whose endpoints occur in the same order as in the corresponding configuration in Figure~\ref{fig:arcpatterns}.

\begin{figure}[h!]
\begin{center}
\begin{tikzpicture}[scale=0.5]
\begin{scope}[shift={(0,4)}]
	\foreach \i in {0,...,5}
	{
		\node[pnt] at (\i,0)(\i){};
	}
	\draw(0)  to [bend left=45] (3);
	\draw(1)  to [bend left=45] (4);
	\draw(2)  to [bend left=45] (5);
	\draw (2.5,-1) node {$321$};
	
	\begin{scope}[shift={(8,0)}]
		\foreach \i in {0,...,5}
		{
			\node[pnt] at (\i,0)(\i){};
		}
		\draw(0)  to [bend left=45] (5);
		\draw(1)  to [bend left=45] (4);
		\draw(2)  to [bend left=45] (3);
		\draw (2.5,-1) node {$123$};
	\end{scope}
	
	\begin{scope}[shift={(16,0)}]
		\foreach \i in {0,...,5}
		{
			\node[pnt] at (\i,0)(\i){};
		}		
		\draw(1)  to [bend left=45] (3);
		\draw(2)  to [bend left=45] (4);
		\draw(0)  to [bend left=45] (5);
		\draw (2.5,-1) node {$132$};
	\end{scope}
\end{scope}

\begin{scope}[shift={(0,0)}]
	\foreach \i in {0,...,5}
		{
			\node[pnt] at (\i,0)(\i){};
		}		
		\draw(1)  to [bend left=45] (3);
		\draw(0)  to [bend left=45] (4);
		\draw(2)  to [bend left=45] (5);
		\draw (2.5,-1) node {$231$};
\end{scope}
\begin{scope}[shift={(8,0)}]
	\foreach \i in {0,...,5}
		{
			\node[pnt] at (\i,0)(\i){};
		}		
		\draw(0)  to [bend left=45] (3);
		\draw(2)  to [bend left=45] (4);
		\draw(1)  to [bend left=45] (5);
		\draw (2.5,-1) node {$312$};
\end{scope}
\begin{scope}[shift={(16,0)}]
	\foreach \i in {0,...,5}
		{
			\node[pnt] at (\i,0)(\i){};
		}		
		\draw(2)  to [bend left=45] (3);
		\draw(0)  to [bend left=45] (4);
		\draw(1)  to [bend left=45] (5);
		\draw (2.5,-1) node {$213$};
\end{scope}
\end{tikzpicture}
\caption{Forbidden configurations corresponding to $\tau\in \S_3$.}
\label{fig:arcpatterns}
\end{center}
\end{figure}
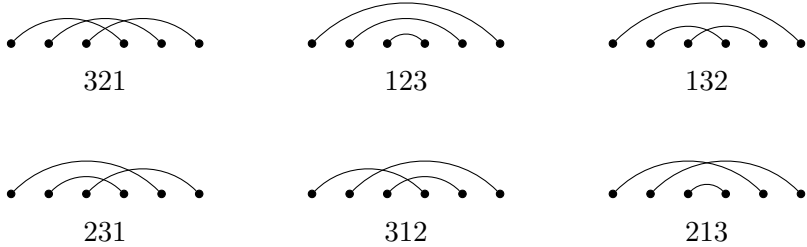

Since $\kappa$ is a bijection, it is clear that $|\M_F(\tau)|=|\R_F(\tau)|$ for any $\tau$. Thus, shape-Wilf-equivalence can be interpreted in terms of pattern-avoiding matchings:
$\sigma\sim\tau$ if and only if $|\M_F(\sigma)|=|\M_F(\tau)|$ for every Ferrers board $F$ (or equivalently, for every $F\in\bigcup_n\F_n$, since the condition
is void otherwise). In particular, if $\sigma\sim\tau$, then $|\M_n(\sigma)|=|\M_n(\tau)|$ for all $n$. No counterexample for the converse statement is known.

\begin{question}\label{quest:converse}
Are there patterns $\sigma$, $\tau$ that satisfy $|\M_n(\sigma)|=|\M_n(\tau)|$ for all $n$, but are not shape-Wilf-equivalent?
\end{question}

In the case of simultaneous avoidance of a pair of patterns, we answer the above question in the affirmative in Section~\ref{sec:IIandIII}.

\subsection{Set partitions}\label{sec:partitions}

Denote by $\P_n$ the set of partitions of $[n]$. Similarly to what we did for matchings, we represent
partitions of $[n]$ as arc diagrams on
$n$ points on a horizontal line, numbered from left to right. For each block $\{i_1,i_2,\dots,i_j\}$ with $i_1<i_2<\dots<i_j$, we draw $j-1$ arcs
$(i_1,i_2), (i_2,i_3), \dots, (i_{j-1},i_j)$ (see Figure~\ref{fig:setpartition}).
If $j\ge2$, we call $i_1$ an \emph{opener}, $i_j$ a \emph{closer}, and we say that $i_2,\dots,i_{j-1}$ are \emph{transitory} vertices. If $j=1$,
the vertex $i_1$ is called a singleton.

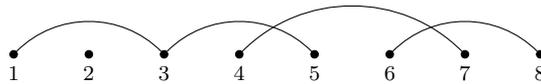
\begin{figure}[h!]
\begin{center}
\begin{tikzpicture}[scale=0.5]

	\node[pnt] at (0,0)(1){};
	\node[pnt] at (2,0)(2){};
	\node[pnt] at (4,0)(3){};
	\node[pnt] at (6,0)(4){};
	\node[pnt] at (8,0)(5){};
	\node[pnt] at (10,0)(6){};
	\node[pnt] at (12,0)(7){};
	\node[pnt] at (14,0)(8){};
	
	\node at (0,-.5){\scriptsize 1};
	\node at (2,-.5){\scriptsize 2};
	\node at (4,-.5){\scriptsize 3};
	\node at (6,-.5){\scriptsize 4};
	\node at (8,-.5){\scriptsize 5};
	\node at (10,-.5){\scriptsize 6};
	\node at (12,-.5){\scriptsize 7};
	\node at (14,-.5){\scriptsize 8};
	
	\draw(1)  to [bend left=45] (3);
	\draw(3)  to [bend left=45] (5);
	\draw(4)  to [bend left=45] (7);
	\draw(6)  to [bend left=45] (8);

	\end{tikzpicture}
\caption{The arc diagram of the partition $\{\{1,3,5\},\{2\},\{4,7\},\{6,8\}\}$. Vertices $1,4,6$ are openers, $5,7,8$ are closers, $3$ is transitory, and $2$ is a singleton.}
\label{fig:setpartition}
\end{center}
\end{figure}

We will use the term partition to refer to a set partition when it creates no confusion.
Note that matchings are partitions where all blocks have size $2$. The definition of pattern avoidance for matchings extends to partitions as follows.

\begin{definition}
We say that a partition $P\in\P_{n}$ {\em avoids} the pattern $\tau\in\S_k$ if
there are no $2k$ vertices $1\le i_1 < \ldots < i_{2k} \le n$ such that $P$ contains all the arcs $(i_{a},i_{2k+1-\tau(a)})$ for $1\le a\le k$.
Denote by $\P_n(\tau)$ the set of $\tau$-avoiding partitions in $\P_{n}$.
\end{definition}

Note that in the above definition, singleton blocks of $P$ do not contribute to occurrences of any pattern~$\tau$.

To enumerate partitions avoiding a pattern, we will use the following construction that associates a matching to each partition.
Given a partition $P$ represented as an arc diagram, remove all singleton vertices, and split each transitory vertex into two vertices: a closer followed by an opener (see Figure~\ref{fig:partition_to_matching}).

\begin{figure}[h!]
\begin{center}
\begin{tikzpicture}[scale=0.5]
	\node[pnt] at (0,0)(1){};

	\node[pnt] at (3.7,0)(3){};
	\node[pnt] at (4.3,0)(3'){};
	\node[pnt] at (6,0)(4){};
	\node[pnt] at (8,0)(5){};
	\node[pnt] at (10,0)(6){};
	\node[pnt] at (12,0)(7){};
	\node[pnt] at (14,0)(8){};
	
	\node at (0,-.5){\scriptsize 1};

	\node at (3.7,-.5){\scriptsize $3'$};
	\node at (4.3,-.5){\scriptsize $3''$};
	\node at (6,-.5){\scriptsize 4};
	\node at (8,-.5){\scriptsize 5};
	\node at (10,-.5){\scriptsize 6};
	\node at (12,-.5){\scriptsize 7};
	\node at (14,-.5){\scriptsize 8};
	
	\draw(1)  to [bend left=45] (3);
	\draw(3')  to [bend left=45] (5);
	\draw(4)  to [bend left=45] (7);
	\draw(6)  to [bend left=45] (8);\end{tikzpicture}
\caption{The matching associated to the partition in Figure~\ref{fig:setpartition}. Vertex $2$ has been removed, and vertex $3$ has been split into two vertices $3'$ and $3''$.}
\label{fig:partition_to_matching}
\end{center}
\end{figure}
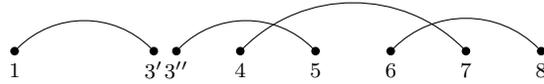

This operation produces a matching $M_P$, and the transformation $P\mapsto M_P$ preserves occurrences of every pattern $\tau$. In particular,
$P$ avoids $\tau$ if and only if $M_P$ does. If $P\in\P_n$ has $b$ blocks, then $M_P\in\M_{n-b}$.

The above construction can be reversed to generate all $\tau$-avoiding set partitions $\bigcup_n\P_n(\tau)$ from the set of all $\tau$-avoiding matchings $\bigcup_n\M_n(\tau)$.
Given such a matching $M$, one can first choose, for each closer immediately followed by an opener, either to merge them into one transitory vertex or to leave them as they are; then one can insert singleton vertices in any position.

Given a matching $M$, let $\val(M)$ denote the number of closers immediately followed by openers in $M$. We call these {\em valleys} of $M$ because, if $\kappa(M)=(R,F)$, they correspond directly to the valleys of $F$, that is,
$\val(M) = \val(F)$.
From the above construction, if $$A(v,z)=\sum_{n\ge 0}\sum_{M\in\M_n(\tau)}u^{\val(M)}z^n$$ is the generating function for $\tau$-avoiding matchings with
respect to the number of valleys, then
$$\tilde{B}(z)=A\left(1+\frac{1}{z},z^2\right)$$ is the generating function for $\tau$-avoiding set partitions without singleton blocks, and
\beq\label{eq:B}B(z)=\frac{1}{1-z}\,\tilde{B}\left(\frac{z}{1-z}\right)=\frac{1}{1-z}\,A\left(\frac{1}{z},\frac{z^2}{(1-z)^2}\right)\eeq is the generating function for all $\tau$-avoiding set partitions.

If two patterns satisfy $\sigma\sim\tau$, then $|\M_F(\sigma)|=|\M_F(\tau)|$ for every Ferrers board $F$, and so the above generating function $A(v,z)$ is the same for $\sigma$-avoiding as for $\tau$-avoiding matchings. It follows that $|\P_n(\sigma)|=|\P_n(\tau)|$ for all $n$ in this case.

\subsection{Numerical data}

In the next three sections we discuss in detail matchings and partitions avoiding a pattern in each one of the three shape-Wilf-equivalence classes of patterns of length~3: $123\sim321\sim213$, $231\sim312$, and $132$.
Some numerical data for
matchings and partitions avoiding each pattern is given in Tables~\ref{tab:matchings} and~\ref{tab:partitions}. The sequences for $|\M_n(231)|$, $|\M_n(132)|$, $|\P_n(231)|$ and $|\P_n(132)|$ did not appear
in the OEIS~\cite{OEIS}. No generating functions for the two sequences involving the pattern $132$ are known.

\begin{table}[ht]
$$\begin{array}{|c|cccccccccc|}
\hline
n &  1 & 2 &3&4&5&6&7&8&9&10\\ 
\hline
|\M_n(231)|  & 1 & 3&14&83&570&4318&35068&299907&2668994&24513578\\ 
\hline
|\M_n(123)| &1& 3& 14& 84& 594& 4719& 40898& 379236& 3711916& 37975756\\ 
\hline
|\M_n(132)| & 1& 3& 14& 84& 595& 4750& 41541& 390566 & 3895957 & 40835749 \\
\hline
\end{array}$$
\caption{The first values of the sequences counting matchings that avoid a pattern of length~3.}
\label{tab:matchings}
\end{table}

\begin{table}[ht]
$$\begin{array}{|c|cccccccccccc|}
\hline
n & 0& 1 & 2 &3&4&5&6&7&8&9&10&11\\
\hline
|\P_n(231)| & 1 & 1 & 2&5&15&52&202&858&3909&18822&94712&493834\\ 
\hline
|\P_n(123)|& 1& 1& 2& 5& 15& 52& 202& 859& 3930& 19095& 97566 & 520257\\ 
\hline
|\P_n(132)| & 1 & 1 & 2 & 5 & 15 & 52 & 202 & 859 & 3930 & 19096 & 97593 & 520694\\
\hline
\end{array}$$
\caption{The first values of the sequences counting set partitions that avoid a pattern of length~3.}
\label{tab:partitions}
\end{table}

\section{The patterns $123\sim321\sim 213$}\label{sec:123}

\subsection{Background}

As mentioned in the introduction, the shape-Wilf-equivalences $123\sim321$ and $123\sim213$ were first proved by Backelin, West and Xin~\cite{BWX} using complicated arguments.
In the context of matchings, the statement $123\sim321\sim 213$ is equivalent to the fact that $|\M_F(123)|=|\M_F(321)|=|\M_F(213)|$ for every Ferrers board $F$.
Recall that $12\dots k$-avoiding (resp. $k\dots 21$-avoiding) matchings are also known as $k$-nonnesting (resp. $k$-noncrossing) matchings.
A shape-preserving bijection between $k$-noncrossing and $k$-nonnesting matchings (which also extends to partitions)
was given by Chen et al~\cite{CDDSY} via vacillating tableaux, providing a simpler proof of the equivalence $12\dots k\sim k\dots 21$.
This bijection was later reformulated by Krattenthaler~\cite{Kra06} in terms of full rook placements and growth diagrams.

Chen et al~\cite{CDDSY} also give a bijection between $k$-noncrossing matchings and certain lattice walks which, in the case $k=3$, can be interpreted as pairs of noncrossing Dyck paths.
It follows that the number of $321$-avoiding matchings on $[2n]$ is the determinant of Catalan numbers $C_nC_{n+2}-C_{n+1}^2$. This formula was first found by Gouyou-Beauchamps~\cite{Gou}
via a recursive bijection from $123$-avoiding matchings and pairs of noncrossing Dyck paths. Thus, the enumeration sequence for $123$-avoiding matchings is P-recursive, which
is equivalent to the corresponding generating function being $D$-finite, but it is not algebraic. A recurrence for the number of $123$-avoiding partitions was found by Bousquel-M\'elou and Xin~\cite{BMXin},
who prove that this sequence is also $D$-finite but not algebraic.

Another proof of the equivalence $123\sim213$ was given by Jel\'{\i}nek~\cite{Jel} by means of a bijection between $213$-avoiding matchings and pairs of noncrossing Dyck paths.
In Section~\ref{sec:bij213} we provide a much simpler bijection between these two sets.

\subsection{A simple bijection between $321$-avoiding and $213$-avoiding matchings}\label{sec:bij213}

The main goal of this section is to provide a simple bijective proof of the fact that $321\sim213$, which is stated in Theorem~\ref{thm:1} below.
For the rest of this section, we fix a Ferrers board $F\in\F_n$, and we let $V_i$ denote the $i$th vertex on the border of $F$.
Recall that $\D_F^2$ denotes the set of pairs of noncrossing Dyck paths where $D_F$ is the top path.

\begin{theorem}\label{thm:1}
There are explicit bijections
$$ \Delta_{321}: \M_F(321) \rightarrow \D_F^2$$
and
$$ \Delta_{213} : \M_F(213) \rightarrow \D_F^2.$$
Therefore, $321\sim 213$.
\end{theorem}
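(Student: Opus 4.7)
The plan is to construct each of the two bijections by a single left-to-right sweep along the border of $F$, defining at each border vertex $V_i$ the next step of the bottom path $D_0$ from a local feature of the matching $M$.

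For $\Delta_{321}$, I would use the RSK-type growth-diagram correspondence from~\cite{CDDSY,Kra06} applied to the rook placement $\kappa(M)=(R,F)$. Since $M$ avoids $321$, the Young diagram recorded at each border vertex $V_i$ has at most two rows, so is encoded by a pair $(\lambda_1^{(i)},\lambda_2^{(i)})$ with $\lambda_1^{(i)}\ge\lambda_2^{(i)}\ge 0$. As $i$ advances by one along $D_F$, exactly one of $\lambda_1,\lambda_2$ changes by $\pm 1$, and recording at each step which of the two row lengths moved produces a second lattice path; after the obvious re-encoding of $\pm 1$ increments as east/south steps, this gives a Dyck path $D_0$ satisfying $D_0\le D_F$. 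Invertibility of $\Delta_{321}$ is then inherited from the invertibility of the underlying growth-diagram procedure.

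For $\Delta_{213}$, the novel ingredient, my plan is to give a direct construction that bypasses RSK. The key structural fact is that in a 213-avoiding matching no arc can be strictly nested between the inner endpoints of two crossing arcs. Sweeping the vertices of $M$ from left to right and maintaining the list of currently unmatched openers, this constraint sharply restricts the opener to which each new closer can be paired. Recording at each vertex which of a small number of canonical rules applies yields the next step of $D_0$. The inverse reads a pair $(D_0,D_F)\in\D_F^2$ from left to right, using $D_F$ to tell openers from closers and the steps of $D_0$ to dictate how to pair them greedily.

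The main obstacle will be the correctness of $\Delta_{213}$: showing that the forward sweep always yields a pair in $\D_F^2$ (i.e.\ $D_0\le D_F$), and that the greedy inverse always outputs a 213-avoiding matching. I expect both to reduce to a single structural lemma that translates the presence of a 213-pattern in $M$ into the bottom path $D_0$ overtaking $D_F$ at some intermediate vertex. A careful case analysis at each step of the sweep, together with induction on the number of vertices, should complete the argument, and the claimed shape-Wilf-equivalence $321\sim 213$ then follows from $|\M_F(321)|=|\D_F^2|=|\M_F(213)|$.
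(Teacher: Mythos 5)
Your construction of $\Delta_{321}$ is essentially the paper's: the paper encodes the two-row oscillating tableau $\lambda^i=(\lambda_1^{(i)},\lambda_2^{(i)})$ coming from Fomin's growth diagram by the two height sequences $h_i=\lambda_1^{(i)}+\lambda_2^{(i)}$ (which is forced to equal the height sequence of $D_F$) and $j_i=\lambda_1^{(i)}-\lambda_2^{(i)}$, and the latter is exactly your ``record which row moved'' path; the inequalities $0\le j_i\le h_i$ give $(D_0,D_F)\in\D_F^2$, and recovering $\lambda^i=((h_i+j_i)/2,(h_i-j_i)/2)$ inverts the map. That half is correct and is the same argument.

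The gap is in $\Delta_{213}$, which is the half the theorem actually needs you to supply. You never define the map: ``recording at each vertex which of a small number of canonical rules applies'' is a promise, not a construction, and the single structural lemma to which you defer correctness is neither stated nor proved. It is also not clear that a left-to-right sweep seeing only the currently open arcs can determine the next step of $D_0$: in the bijection the paper uses, the bottom path is the border of the \emph{minimal} Ferrers board $F_R$ containing the rook placement $R$ given by $\kappa(M)$, and the height of that border above column $i$ is $\max_{i'\ge i}\pi_R(i')$, which depends on arcs whose closers have not yet been scanned at time $i$; so a genuinely local rule would itself require justification you have not sketched. The paper's route is much shorter and worth comparing against: since $213$ ends with its largest entry, $(R,F)$ avoids $213$ as a placement if and only if the permutation $\pi_R$ avoids $213$; Krattenthaler's bijection $\S_n(213)\to\D_n$ sending $\pi_R$ to the border of $F_R$ then does all the work, the noncrossing condition $D_{F_R}\le D_F$ is automatic from $F_R\subseteq F$, and the inverse is the explicit greedy rule placing, row by row from the top, a rook in the rightmost column not yet used. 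As written, your $213$ half is a plan rather than an argument, so the equivalence $321\sim213$ is not yet established.
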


This theorem will follow from Theorems~\ref{thm:delta321} and~\ref{thm:delta213} below.
The bijection $\Delta_{321}$ was first constructed by Chen et al. in~\cite{CDDSY} using vacillating tableaux. Here we provide a short description of this bijection in our language.
Recall that matchings can be viewed as full rook placements via the bijection $\kappa:\M_F\rightarrow\R_F$ described in Section~\ref{sec:matchings}.

It will be convenient to identify a Dyck path $D\in\D_n$ with the sequence $d_0d_1\ldots d_{2n}$ that records the distances from its vertices to the diagonal $y=n-x$
(recall that our Dyck paths start at $(0,n)$ and end at $(n,0)$). More precisely, define $d_0 = 0 = d_{2n}$, and for each $0\le i< 2n$, let $d_{i+1} = d_i+1$ if $V_{i+1}$ is
to the right of $V_i$, and  $d_{i+1}= d_i -1$ if $V_{i+1}$ is below $V_i$. We call $d_0d_1\ldots d_{2n}$ the \emph{height sequence} of $D$.
Fix $h_0h_1\ldots h_{2n}$ to be the height sequence of $D_F$.
Recall that $R\,\cap\,\Gamma(V_i)$ denotes the restriction of $R$ to the rectangle with vertices at the origin and at $V_i$.

\begin{lemma}\label{lem:height_sequence}
For any $(R,F)\in\R_F$ we have
$$
h_i = |R \,\cap\, \Gamma(V_i)|
$$
for all $0\le i\le 2n$.
\end{lemma}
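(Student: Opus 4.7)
The plan is to proceed by induction on $i$, exploiting the fact that consecutive vertices on the border differ by a single east or south step of $D_F$, while the height sequence changes by $+1$ or $-1$ accordingly. The base case $i=0$ is immediate: $V_0=(0,n)$ gives $\Gamma(V_0)=[0,0]\times[0,n]$, which contains no cells, so $|R\cap\Gamma(V_0)|=0=h_0$.

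For the inductive step, assume $h_i=|R\cap\Gamma(V_i)|$ and write $V_i=(a,b)$. There are two cases. If the step is east, so $V_{i+1}=(a+1,b)$, then this boundary edge sits along the top of column $a+1$, which forces column $a+1$ of $F$ to have height exactly $b$. Since $R$ is a full rook placement, column $a+1$ contains a unique rook, which must lie at some $(a+1,r)$ with $1\le r\le b$; this rook belongs to $\Gamma(V_{i+1})\setminus\Gamma(V_i)$, so $|R\cap\Gamma(V_{i+1})|=|R\cap\Gamma(V_i)|+1=h_i+1=h_{i+1}$. If instead the step is south, so $V_{i+1}=(a,b-1)$, then the edge sits along the right side of row $b$, forcing row $b$ to have width exactly $a$. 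The unique rook in row $b$ lies at some $(c,b)$ with $1\le c\le a$ and so belongs to $\Gamma(V_i)\setminus\Gamma(V_{i+1})$, giving $|R\cap\Gamma(V_{i+1})|=|R\cap\Gamma(V_i)|-1=h_i-1=h_{i+1}$. This completes the induction.

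I do not anticipate any real obstacles; the argument is essentially geometric bookkeeping. The only point requiring care is the distinction between \emph{column $a+1$ has height at least $b$} (which follows merely from $V_{i+1}$ lying in $F$) and \emph{column $a+1$ has height exactly $b$} (which uses crucially that the edge $V_iV_{i+1}$ lies on the boundary of $F$). This exactness is precisely what forces each step along $D_F$ to correspond to a single rook entering or leaving the rectangle $\Gamma$, and what makes the height function $h_i$ track $|R\cap\Gamma(V_i)|$ in lockstep.
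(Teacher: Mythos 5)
Your proof is correct and follows essentially the same route as the paper: both arguments observe that each east (resp.\ south) step of the border adds (resp.\ removes) exactly the one rook in the corresponding full column (resp.\ row), so $|R\cap\Gamma(V_i)|$ changes by $\pm1$ in lockstep with $h_i$. Your version merely makes the induction and the ``exact height/width'' point explicit where the paper states it more tersely.
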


\begin{proof}
For $i=0$ the result is clear since $h_0= 0$ and $\Gamma(V_0)$ is empty. Since $(R,F)$ is a full rook placement, it has a rook in each row and column. It follows that for $0\le i<2n$,
$$h_{i+1} - h_i =   |R \,\cap\, \Gamma(V_{i+1})| - |R \,\cap\, \Gamma(V_i)|,$$
its value being $1$ or $-1$ depending on whether $V_{i+1}$ is to the right of or below $V_i$, respectively.
\end{proof}

For $(R,F)\in \R_F$, define the sequence $j_0\ldots j_{2n}$ by letting
$$j_i = 2\ell_i - h_i,$$
where $\ell_i$ is the length of the longest increasing sequence in $R\,\cap\,\Gamma(V_i)$. The following property of
this sequence will be used to define $\Delta_{321}$.

\begin{lemma}\label{lem:2}
If $(R,F)\in \R_F(321)$, then the sequence $ j_0\ldots j_{2n}$ is the height sequence of a Dyck path $D_{R,F}$, and it satisfies $j_i\leq h_i$ for all $i$. Thus, $(D_{R,F},D_F)\in \D^2_F$.
\end{lemma}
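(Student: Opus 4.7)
The plan is to verify four properties: (a) $j_0=j_{2n}=0$; (b) $|j_{i+1}-j_i|=1$ for every $0\le i<2n$; (c) $j_i\ge 0$; and (d) $j_i\le h_i$. Together these are exactly what it means for $j_0\dots j_{2n}$ to be the height sequence of some Dyck path $D_{R,F}\in\D_n$ lying weakly below $D_F$, which is the conclusion of the lemma. Only property (c) will use the $321$-avoidance hypothesis.

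Property (a) is immediate since $\Gamma(V_0)$ and $\Gamma(V_{2n})$ are both empty, giving $h_0=h_{2n}=\ell_0=\ell_{2n}=0$. For (b) I would analyze the east and south cases separately. If $V_{i+1}$ lies east of $V_i$, then by Lemma~\ref{lem:height_sequence} together with the fullness of $R$, exactly one rook $r$ is gained in passing from $\Gamma(V_i)$ to $\Gamma(V_{i+1})$, and $r$ has the strictly largest $x$-coordinate among rooks in $\Gamma(V_{i+1})$. Any increasing sequence in $R\cap\Gamma(V_{i+1})$ either omits $r$ (hence has length at most $\ell_i$) or ends at $r$ with prefix lying in $\Gamma(V_i)$ (hence has length at most $\ell_i+1$); combined with $\ell_{i+1}\ge\ell_i$, this yields $\ell_{i+1}-\ell_i\in\{0,1\}$. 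The south case is dual: the unique rook in the topmost row is lost, and a single deletion can change the longest increasing subsequence by at most one, so $\ell_{i+1}-\ell_i\in\{-1,0\}$. In all four subcases one computes $j_{i+1}-j_i=2(\ell_{i+1}-\ell_i)-(h_{i+1}-h_i)=\pm 1$.

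For (c), the set $R\cap\Gamma(V_i)$ corresponds to a $321$-avoiding permutation of size $h_i$, so by RSK its shape $\lambda$ has at most two rows; equivalently, the permutation decomposes into two increasing subsequences. Hence $\lambda_1+\lambda_2=h_i$ with $\lambda_1\ge\lambda_2$, forcing $\ell_i=\lambda_1\ge h_i/2$ and so $j_i\ge 0$. Property (d) is immediate from $\ell_i\le h_i$. Combining (a)--(d) completes the argument.

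The one genuinely substantive step is the case analysis in (b), where the extremal position of the added or removed rook is what forces the change in $\ell_i$ to be at most one unit in the expected direction; everything else is bookkeeping driven by Lemma~\ref{lem:height_sequence}, together with a single appeal to the RSK characterization of $321$-avoidance in (c).
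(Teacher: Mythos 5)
Your proof is correct and follows essentially the same route as the paper: the paper likewise reduces the lemma to the bounds $0\le j_i\le h_i$ via the decomposition of a $321$-avoiding permutation into two increasing subsequences (so that $2\ell_i\ge h_i$), and simply asserts the step property $j_{i+1}=j_i\pm1$ as clear. Your case analysis of the added or removed extremal rook just fills in that last assertion explicitly, and your RSK phrasing of the positivity bound is an equivalent formulation of the paper's two-increasing-sequences argument.
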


\begin{proof}
Clearly $j_0 = 0 = j_{2n}$, and $j_{i+1}=j_i \pm 1$ for $0\le i<2n$.  It remains to show that $0\le j_i\le h_i$ for all $i$. The permutation determined by $R\,\cap\,\Gamma(V_i)$ is $321$-avoiding, so it is the union of
two disjoint increasing sequences. Letting $r$ and $s$ be the lengths of these sequences, with $r\ge s$, we have
$$2h_i\geq 2\ell_i\geq r + s = h_i,$$
by Lemma~\ref{lem:height_sequence}. It follows that $0\le 2\ell_i-h_i\le h_i$.
\end{proof}

Define a map $\delta_{321}:\R_F(321)\rightarrow \D_F^2$ by letting $\delta_{321}(R,F)=(D_{R,F},D_F)$, with $D_{R,F}$ as defined in Lemma~\ref{lem:2}. Finally, let $\Delta_{321}=\delta_{321}\circ \kappa$.
An example of the map $\delta_{321}$ is given in Figure~\ref{fig:del_321}. It remains to show that $\delta_{321}$ is bijective.

\begin{figure}[h!]
    \def\R{-- ++(1,0) [fill] circle(1.3pt)}
    \def\D{-- ++(0,-1) [fill] circle(1.3pt)}
\centering

\begin{tikzpicture}[scale=0.4]
\begin{scope}[shift={(0,0)}, scale = 1.3]
\foreach \i / \h in {0/6,1/6, 2/6, 3/6, 4/5, 5/3, 6/3}
	{
		\draw (\i, 0) -- (\i, \h);
	}
\foreach \i / \w in {0/6,1/6, 2/6, 3/6, 4/4, 5/4, 6/3}
	{
		\draw (0, \i) -- (\w, \i);
	}
\foreach \i / \w in {0.5/4.5,1.5/0.5, 2.5/5.5, 3.5/3.5, 4.5/2.5, 5.5/1.5}
	{
		\draw (\i, \w) node {{$\times$}};
	}

\draw (0,6.3) node {\footnotesize 0};
\draw (1,6.3) node {\footnotesize 1};
\draw (2,6.3) node {\footnotesize 0};
\draw (3,6.3) node {\footnotesize 1};

\draw (3.3,5.3) node {\footnotesize 0};
\draw (4,   5.3) node {\footnotesize 1};

\draw (4.3,4) node {\footnotesize 2};

\draw (4.3,3.3) node {\footnotesize 1};
\draw (5,3.3) node {\footnotesize 2};
\draw (6,3.3) node {\footnotesize 1};

\draw (6.3,2) node {\footnotesize 2};
\draw (6.3,1) node {\footnotesize 1};
\draw (6.3,0) node {\footnotesize 0};

\draw[->] (8,4) -- (12,4);
\draw (10,4.7) node {$\delta_{321}$};
\end{scope}

\begin{scope}[shift = {(16,8)}, scale=1.5]
\draw[line width = .4mm] (0.1,0.1) circle(1.3pt)\R\R\R\D\R\D\D\R\R\D\D\D;
\draw[line width = .4mm] (0,0) circle(1.3pt)\R\D\R\D\R\R\D\R\D\R\D\D;
\end{scope}
\end{tikzpicture}

\caption{An example of the bijection $\delta_{321}$. The sequence $j_0\ldots j_{2n}$ is written along the border of~$F$.}
\label{fig:del_321}
\end{figure}
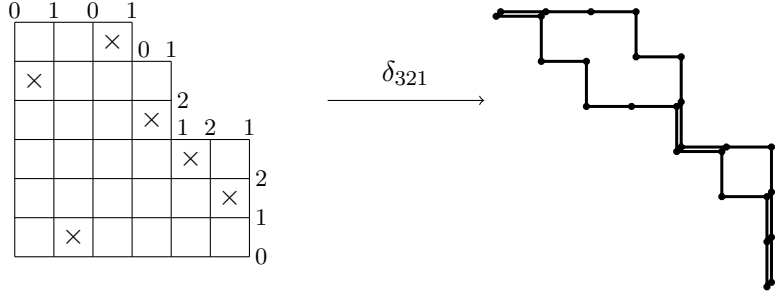

\begin{theorem}\label{thm:delta321}
The map $\delta_{321}:\R_F(321)\rightarrow \D_F^2$ is a bijection, and thus so is $\Delta_{321}:\M_F(321) \to \D_F^2$.
\end{theorem}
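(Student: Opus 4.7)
The plan is to construct an explicit inverse $\epsilon: \D_F^2 \to \R_F(321)$ using the growth diagram machinery of Fomin, reformulated in the rook placement language by Krattenthaler~\cite{Kra06}. The labeling of Lemma~\ref{lem:2} on border vertices is the restriction of a richer structure that lives at every lattice vertex of~$F$: given $(R,F)\in\R_F(321)$, define for each vertex $V=(a,b)$ the partition $\lambda_V=(\ell_V,\,h_V-\ell_V)$, where $h_V=|R\,\cap\,\Gamma(V)|$ and $\ell_V$ is the length of the longest increasing subsequence in $R\,\cap\,\Gamma(V)$. As in Lemma~\ref{lem:2}, $321$-avoidance forces $\lambda_V$ to have at most two parts, and the border labels $\lambda_{V_i}=((h_i+j_i)/2,\,(h_i-j_i)/2)$ are determined by the pair $(D_{R,F},D_F)$.

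The first key step is to verify Fomin's local growth rules: for every unit cell of $F$ with SW, NW, SE, NE corners labeled by partitions $\mu,\nu,\rho,\lambda$, this quadruple, together with the indicator of whether the cell carries a rook, obeys a fixed set of local rules that are invertible in the sense that $(\mu,\text{rook?})$ is determined by $(\nu,\rho,\lambda)$. In the $321$-avoiding setting all partitions stay within $2$ rows, so the rules reduce to combinatorics of Dyck-path-like data and can be verified by direct case analysis.

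With the local rules in hand, define $\epsilon$ as follows: given $(D_0,D_F)\in\D_F^2$, label each border vertex $V_i$ with $\lambda_{V_i}=((h_i+j_i)/2,\,(h_i-j_i)/2)$, then sweep across the cells of $F$ from the northeast corner inward (in reverse row-major order), applying the inverse local rule at each step to recover simultaneously the SW-corner partition and the rook-content of the cell. The resulting rook configuration is $R=\epsilon(D_0,D_F)$, and the interior partitions produced this way certify that $R\in\R_F(321)$, since no partition exceeds two rows.

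The identities $\epsilon\circ\delta_{321}=\mathrm{id}$ and $\delta_{321}\circ\epsilon=\mathrm{id}$ follow by construction: $\epsilon$ reconstructs precisely the same interior partitions and rook positions that arise from $R$ in the forward direction. The main obstacle is the verification of the local growth rules, both their validity (which boils down to a Schensted-type argument about how the RSK shape changes when a single row or column is added or removed) and their invertibility; for this one may either invoke the classical results of Fomin~\cite{Fom86} and Krattenthaler~\cite{Kra06}, or unwind the rules directly in the restricted $2$-row setting.
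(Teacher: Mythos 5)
Your proposal is correct and follows essentially the same route as the paper: both rest on Fomin's growth-diagram correspondence between full rook placements and $F$-oscillating tableaux (restricted, via Schensted, to partitions with at most two rows in the $321$-avoiding case), together with the identification of the border labels $\lambda^i=\left(\tfrac{h_i+j_i}{2},\tfrac{h_i-j_i}{2}\right)$ with pairs of noncrossing Dyck paths in $\D_F^2$. The only difference is one of packaging: you unwind the local growth rules and the inward reconstruction sweep explicitly, whereas the paper invokes the bijection $\R_F\leftrightarrow\Lambda_F$ as a black box and then simply inverts the coordinate change on the border data.
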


\begin{proof}
The statement is easier to prove from an alternative description of $\delta_{321}$ in terms of Fomin's growth diagrams~\cite{Fom86}.
Define an \emph{$F$-oscillating tableau} to be a sequence of integer partitions $\set{\lambda^i}_{0\leq i\leq 2n}$ such that $\lambda^0 = \emptyset = \lambda^{2n}$ and, for $0\le i<2n$,
$\lambda^{i+1}$ is obtained from $\lambda^{i}$ by adding (removing) a square if $V_{i+1}$ is to the right of (respectively, below) $V_i$.
Denote by $\Lambda_F$ the set of $F$-oscillating tableaux. Fomin's growth diagram algorithm~\cite[Chapter 7]{EC2} gives a bijection between $\R_F$ and $\Lambda_F$ with the following property:
if $(R,F)\in\R_F$ is mapped to $\{\lambda^i\}_i\in\Lambda_F$, then $\lambda^i$ is the shape of the recording (and insertion) tableaux under the RSK correspondence of the permutation determined by $R\,\cap\,\Gamma(V_i)$.
It is a well-known result of Schensted~\cite{Sch} that the length of the longest increasing sequence in this permutation is then the size of the largest part of $\lambda^i$, which we denote $\rho(\lambda^i)$.
Similarly, the length of its longest decreasing sequence is the number of parts of $\lambda^i$.
Thus, Fomin's growth diagram algorithm restricts to a bijection between
$\R_F(321)$ and the subset $\Lambda_F(321)\subset \Lambda_F$ consisting of $F$-oscillating tableaux where each partition has at most two parts.

Given $(R,F)\in\R_F(321)$, we can now define its image $\delta_{321}(R,F)=(D_{R,F},D_F)$ in terms of the corresponding $F$-oscillating tableau $\{\lambda^i\}_i\in\Lambda_F(321)$. Indeed,
$$|\lambda^i| =|R\,\cap\,\Gamma(V_i)| =  h_i \qquad\textrm{and}\qquad  2\rho(\lambda^i)-|\lambda^i| = 2\ell_i - h_i = j_i .$$

Proving that $\delta_{321}:\R_F(321)\rightarrow \D_F^2$ is a bijection is therefore equivalent to showing that the map $\{\lambda^i\}_i\mapsto (D_{R,F},D_F)$
is a bijection from $\Lambda_F(321)$ to $\D^2_F$.
Injectivity is clear since we may recover each $\lambda^i$ from $h_i=|\lambda^i|$ and $j_i = 2\rho(\lambda^i)-|\lambda^i|$ by observing that $\rho(\lambda^i)=(h_i+j_i)/2$ is the largest part of $\lambda^i$,
and $h_i - \rho(\lambda^i)$ is the other part.
To show surjectivity, note that any $(D_0,D_F)\in D^2_F$, where $k_0\dots k_{2n}$ is the height sequence for $D_0$, is the image of the $F$-oscillating tableaux $\{\lambda^i\}_i\in\Lambda_F(321)$, where
$$\lambda^i = \left(\frac{h_i+k_i}{2},\frac{h_i - k_i}{2}\right).$$
\end{proof}

Let us mention an alternative simple way to describe the path $D_{R,F}$ in the definition of $\delta_{321}$:
first label the vertices in $D_F$ by letting vertex $V_i$ have label $\ell_i$ (the length of the longest increasing sequence in $R\,\cap\,\Gamma(V_i)$);
then take $D_F$ and switch (i.e., $N$ becomes $E$ and viceversa) the steps that have the same label at both endpoints. The resulting path is $D_{R,F}$.

\ms

Now we turn to the second part of the proof of Theorem~\ref{thm:1}.
Even though a different bijection between $\M_F(213)$ and $\D_F^2$ has already been given by Jel\'inek in~\cite{Jel}, here we present a much simpler bijection $\Delta_{213}$ through a short pictorial argument.

As in the case of $321$-avoiding matchings, it is convenient to let $\Delta_{213}=\delta_{213}\circ \kappa$, where the map $\delta_{213}:\R_F(213)\rightarrow \D_F^2$ is defined as $\delta_{213}(R,F) = (D, D_F)$, where $D$ is given by the following construction. We use a bijection due to Krattenthaler~\cite{Kra01} between $213$-avoiding permutations and Dyck paths.
As the pattern $213$ ends with its largest entry, the fact that $(R,F)$ is $213$-avoiding implies that the permutation $\pi_R$ is in fact an element of $\S_n(213)$.
Let $F_R$ be the minimal Ferrers board that contains $R$. Krattenthaler's bijection is the map sending $\pi_R\in \S_n(213)$ to the border of $F_R$. The placement $R$ can be recovered from $F_R$
by the following iterative procedure: begin by placing a rook in the rightmost column of the top row of $F_R$; having placed rooks in the top $k$ rows,
the rook in the $(k+1)$st row from the top is placed in the rightmost column which does not already contain a rook. We define the second path in $\delta_{213}(R,F)$ to be $D=D_{F_R}$.
Note that $F_R\subseteq F$ by definition, so $D_{F_R}$ and $D_F$ are noncrossing Dyck paths.
The following theorem is now clear.

\begin{theorem}\label{thm:delta213}
The map $\delta_{213} : \R_F(213) \to \D_F^2$ is a bijection, and thus so is $\Delta_{213}:\M_F(213) \to \D^2_F$.
\end{theorem}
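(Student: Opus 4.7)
The plan is to exhibit an explicit two-sided inverse. Given $(D_0, D_F)\in\D_F^2$, let $F_0\in\F_n$ be the Ferrers board with border $D_0$; since $D_0$ lies weakly below $D_F$, we have $F_0\subseteq F$. Applying Krattenthaler's iterative rook-placement procedure (the one invoked in the definition of $\delta_{213}$) to $F_0$ produces a placement $R_0$ on $F_0$. I define $\delta_{213}^{-1}(D_0,D_F)=(R_0,F)$, viewing $R_0$ as a placement on the larger board $F$.

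The first step is to verify that $(R_0,F)\in\R_F(213)$. Since both $F_0$ and $F$ belong to $\F_n$ and thus have the same $n$ non-empty rows and $n$ non-empty columns, $R_0$ is automatically a full rook placement on $F$. Krattenthaler's bijection guarantees that $\pi_{R_0}\in\S_n(213)$, and consequently every sub-placement $R_0\cap\Gamma(V)$ for a border vertex $V$ of $F$ is 213-avoiding, because the permutation it determines is a pattern sub-permutation of $\pi_{R_0}$.

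Verifying the inverse property then reduces directly to Krattenthaler's bijection. For the forward composition, given $(R,F)\in\R_F(213)$, the fact (noted in the definition of $\delta_{213}$) that $\pi_R\in\S_n(213)$ means Krattenthaler's iterative reconstruction applied to $F_R$ returns $R$, yielding $\delta_{213}^{-1}(\delta_{213}(R,F))=(R,F)$. For the backward composition, the defining property of Krattenthaler's bijection is precisely that the minimal Ferrers board containing $R_0$ is $F_0$, so $\delta_{213}(R_0,F)=(D_{F_0},D_F)=(D_0,D_F)$.

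The main subtlety I anticipate is confirming the compatibility between the two notions of 213-avoidance: $(R,F)$ avoiding 213 in the sense of Definition~\ref{def:avoidanceF} versus $\pi_R$ avoiding 213 as a permutation. The nontrivial direction exploits the fact that 213 ends with its largest entry: any occurrence of 213 in $\pi_R$ at positions $i<j<k$ is entirely contained in the rectangle $\Gamma(V)$, where $V$ is the border vertex sitting at the top of the column that holds the $k$th rook (this column is non-empty, so such a vertex exists). Once this equivalence is in hand, the whole argument becomes Krattenthaler's bijection repackaged for an arbitrary shape, with the inclusion $F_0\subseteq F$ providing the only new compatibility to check.
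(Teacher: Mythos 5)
Your proposal is correct and follows exactly the route the paper intends: the paper simply declares the theorem ``now clear'' from the construction of $\delta_{213}$ via Krattenthaler's bijection, and your writeup supplies the verifications it leaves implicit (well-definedness of the inverse on an arbitrary $(D_0,D_F)\in\D_F^2$, the two compositions, and the equivalence between $(R,F)$ avoiding $213$ in the sense of Definition~\ref{def:avoidanceF} and $\pi_R\in\S_n(213)$, using that $213$ ends in its largest entry). In particular, your identification of that last equivalence as the only genuine subtlety matches the paper's own remark in the paragraph defining $\delta_{213}$.
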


An example of the map $\delta_{213}$, together with the complete bijection from between $\M_{F}(321)$ and $\M_F(213)$, is given in Figure~\ref{fig:4}.

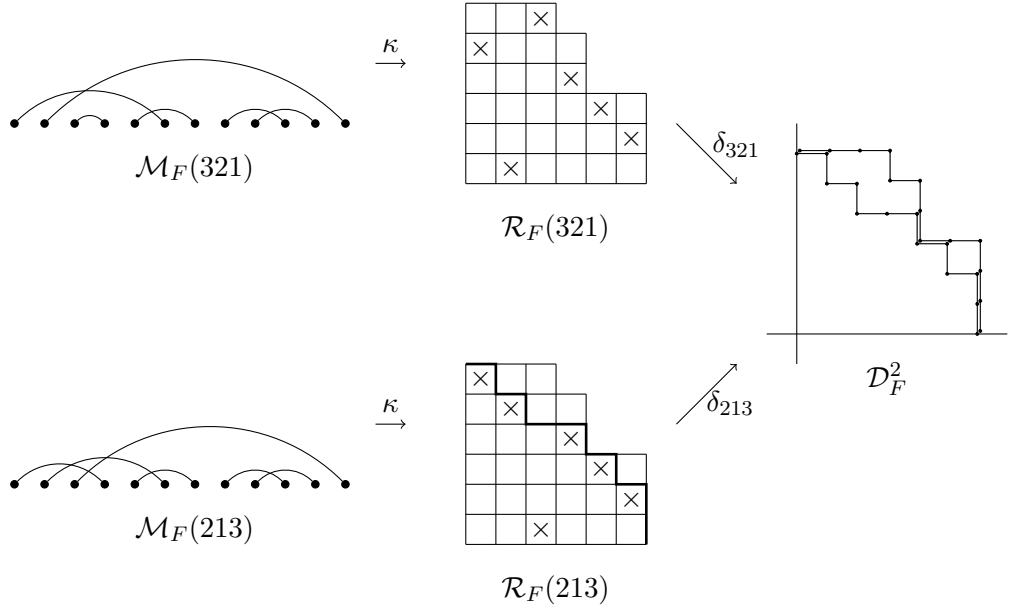
\begin{figure}[h!]
\begin{center}
    \def\U{-- ++(1,0) [fill] circle(1.3pt)}
    \def\N{-- ++(0,-1) [fill] circle(1.3pt)}
\begin{tikzpicture}[scale=0.4]

\begin{scope}[shift={(0,11)}]
	\foreach \i in {0,...,11}
	{
		\node[pnt] at (\i,0)(\i){};
	}
	\draw(0)  to [bend left=45] (5);
	\draw(1)  to [bend left=45] (11);
	\draw(2)  to [bend left=45] (3);
	\draw(4)  to [bend left=45] (6);
	\draw(7)  to [bend left=45] (9);
	\draw(8)  to [bend left=45] (10);
	\draw[->] (12,2) -- (13,2);
	\draw (12.5,2.6) node {$\kappa$};
	
	\draw (6,-1.5) node {$\M_F(321)$};
\end{scope}

\begin{scope}[shift={(15,9)}]
\foreach \i / \h in {0/6,1/6, 2/6, 3/6, 4/5, 5/3, 6/3}
	{
		\draw (\i, 0) -- (\i, \h);
	}
\foreach \i / \w in {0/6,1/6, 2/6, 3/6, 4/4, 5/4, 6/3}
	{
		\draw (0, \i) -- (\w, \i);
	}
\foreach \i / \w in {0.5/4.5,1.5/0.5, 2.5/5.5, 3.5/3.5, 4.5/2.5, 5.5/1.5}
	{
		\draw (\i, \w) node {{$\times$}};
	}
\path[->] (7,2) edge  (9,0);
\draw (9,1.3) node {$\delta_{321}$};

\draw (3,-1.5) node {$\R_F(321)$};
\end{scope}

\begin{scope}[shift = {(26,10)}]
\draw (0.1,0.1) circle(1.3pt) \U\U\U\N\U\N\N\U\U\N\N\N;
\draw (0,0) circle(1.3pt) \U\N\U\N\U\U\N\U\N\U\N\N;
\draw (0,1) -- (0,-7);
\draw (-1,-6) -- (7,-6);

\draw (3,-7.5) node {$\D^2_F$};
\end{scope}

\begin{scope}[shift={(0,-1)}]
	\foreach \i in {0,...,11}
	{
		\node[pnt] at (\i,0)(\i){};
	}
	\draw(0)  to [bend left=45] (3);
	\draw(1)  to [bend left=45] (5);
	\draw(2)  to [bend left=45] (11);
	\draw(4)  to [bend left=45] (6);
	\draw(7)  to [bend left=45] (9);
	\draw(8)  to [bend left=45] (10);
	\draw[->] (12,2) -- (13,2);
	\draw (12.5,2.6) node {$\kappa$};
	\draw (6,-1.5) node {$\M_F(213)$};
\end{scope}

\begin{scope}[shift = {(15,-3)}]
\foreach \i / \h in {0/6,1/6, 2/6, 3/6, 4/5, 5/3, 6/3}
	{
		\draw (\i, 0) -- (\i, \h);
	}
\foreach \i / \w in {0/6,1/6, 2/6, 3/6, 4/4, 5/4, 6/3}
	{
		\draw (0, \i) -- (\w, \i);
	}
\foreach \i / \w in {0.5/5.5,1.5/4.5, 2.5/0.5, 3.5/3.5, 4.5/2.5, 5.5/1.5}
	{
		\draw (\i, \w) node {{$\times$}};
	}
\draw[line width = .35mm] (0,6) -- (1,6) -- (1,5) -- (2,5) -- (2,4) -- (4,4) -- (4,3) -- (4,3) -- (5,3) -- (5,2) -- (6,2) -- (6,0);
\path[->] (7,4) edge (9,6);
\draw (8.8,4.7) node {$\delta_{213}$};
\draw (3,-1.5) node {$\R_F(213)$};
\end{scope}

\end{tikzpicture}
\caption{An example of the bijection between $\M_{F}(321)$ and $\M_F(213)$. The bold path on the bottom Ferrers board represents the border of $F_R$.} 
\label{fig:4}
\end{center}
\end{figure}

It is worth mentioning that in the particular case that $F\in\F_n$ is the square board, the composition $\Delta_{213}^{-1}\circ\Delta_{132}$ gives a bijection between $\S_n(321)$ and $\S_n(213)$ which
coincides, up to symmetry, with a bijection of Elizalde and Pak~\cite{EliPak}, that was used to prove that the number of fixed points
and the number of excedances have the same distribution on both sets. More precisely, the image of $\pi\in\S_n(321)$ by their bijection $\Theta$ is the permutation obtained by reflecting
$\Delta_{213}^{-1}(\Delta_{132}(\pi))$ (viewed as a rook placement) over the diagonal $y=n-x$. Our bijection can thus be interpreted as a generalization of $\Theta$ to arbitrary Ferrers boards.

\subsection{$321$- and $213$-avoiding matchings with fixed points}\label{sec:Gou}

In \cite{Gou} Gouyou-Beauchamps found exact formulas for the number of standard Young tableaux having $n$ squares and at most $p$ rows, for $p\in\{4,5\}$.  Additionally, he gave a formula for the number of Young tableaux having $n$ squares and exactly $k$ columns of odd height. The key theorem in~\cite{Gou} is a bijection between involutions of length $2n+k$ with $k$ fixed points that avoid a decreasing sequence of length 5 (i.e., the pattern $54321$) and pairs of noncrossing Dyck paths of semilength $n+k$ that end with $k$ down steps.  To interpret this bijection in our language we will consider, in this section, matchings that are not necessarily perfect, that is, they may have unmatched elements. Recall the canonical correspondence between involutions of $[2n]$ with no fixed points and perfect matchings in $\M_n$, obtained by matching pairs of elements that belong to the same 2-cycle in the involution. One can extend this correspondence by allowing fixed points in the involution, which become vertices of degree 0 (which we also call \emph{fixed points}) in the matching. Under this correspondence, $54321$-avoiding involutions of $[2n+k]$ with $k$ fixed points are mapped to matchings on $[2n+k]$ with $k$ fixed points satisfying:
\bit \item if we remove the fixed points (and relabel the remaining elements increasingly with $1,2,\dots,2n$), we get an element of $\M_n(123)$, and
\item for any 5 vertices $x_1<\dots<x_5$ where the pairs $(x_1,x_5)$ and $(x_2,x_4)$ are matched, $x_3$ is not a fixed point.
\eit
Let $\M_n^k(123)$ denote the set of such matchings, and let $\D_{n,k}^2$ denote  the set of all pairs $(D_0,D_1)\in\D^2_{n+k}$ where $D_0$ and $D_1$ end with $k$ south steps.
Gouyou-Beauchamps' bijection \cite{Gou} can thus be interpreted as a bijection between $\M_n^k(123)$ and $\D_{n,k}^2$.

In this section we construct two related but simpler bijections between pattern-avoiding matchings with fixed points and $\D_{n,k}^2$.
Let $\M_n^k(213)$ (respectively,  $\M_n^k(321)$) be the set of matchings on $[2n+k]$ with $k$ fixed points with the property that removing the fixed points produces an element of $\M_n(213)$ (respectively, $\M_n(321)$), and for any 5 vertices $x_1<\dots<x_5$ where the pairs $(x_1,x_5)$ and $(x_3,x_4)$ are matched, $x_2$ is not a fixed point (respectively, $(x_1,x_4)$ and $(x_2,x_5)$ are matched, $x_3$ is not a fixed point).
The bijections in the following theorem generalize those in Theorems~\ref{thm:delta321} and~\ref{thm:delta213}.

\begin{theorem}
There are explicit bijections between $\M_n^k(321)$ and $\D_{n,k}^2$, and between $\M_n^k(213)$ and $\D_{n,k}^2$.
\end{theorem}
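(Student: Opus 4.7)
The plan is to enlarge any matching $M\in\M_n^k(\tau)$ with $\tau\in\{321,213\}$ into a $\tau$-avoiding perfect matching $\tilde M\in\M_{n+k}(\tau)$, apply the bijection $\Delta_\tau$ from Theorem~\ref{thm:1}, and verify that the resulting pair of Dyck paths lies in $\D_{n,k}^2$. Specifically, I would append $k$ new vertices to the right end of the arc diagram of $M$ and match the $i$th fixed point of $M$ (reading left to right) with the $(k-i+1)$st appended vertex. This \emph{nested} pairing has two key properties: the $k$ resulting arcs (call them \emph{fp-arcs}) mutually nest, and every right endpoint of an fp-arc lies to the right of every original vertex of $M$. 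By construction, the shape $D_F$ of $\tilde M$ ends with $k$ consecutive south steps.

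Next I would check that $\tilde M$ avoids $\tau$. Any three arcs of $\tilde M$ forming an occurrence of $\tau$ involve zero, one, two, or three fp-arcs. Zero fp-arcs contradicts the $\tau$-avoidance of $M$ on its non-fixed vertices. Two or three fp-arcs is impossible, because fp-arcs mutually nest while both patterns $321$ and $213$ require at least two mutually crossing arcs. If exactly one fp-arc is involved, a short case analysis on which role it plays, together with the fact that every right endpoint of an fp-arc lies in the appended section, shows that only one configuration survives: for $\tau=321$ the fp-arc must play the role of $(i_3,i_6)$, and for $\tau=213$ the role of $(i_2,i_6)$. In both cases the four original endpoints together with the fixed point exhibit exactly the five-vertex forbidden configuration in the definition of $\M_n^k(\tau)$, contradicting $M\in\M_n^k(\tau)$.

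I would then apply $\Delta_\tau$ to $\tilde M$, producing a pair $(D_0,D_F)$, and show that $D_0$ also ends with $k$ south steps. Under $\kappa$, the $k$ fp-arcs become the $k$ rooks of the bottom $k$ rows of the Ferrers board $F$ (these rows being created by the $k$ appended south steps). The nested pairing places the $i$th such rook at column $o_i$ and row $i$ from the bottom, where $o_1<\cdots<o_k$ are the columns of the fixed points, so these rooks form a strictly increasing sequence of length $k$. Hence the longest increasing subsequence in $R\cap\Gamma(V_{2n+k})$ has length $k$. For $\tau=321$, Lemma~\ref{lem:2} gives $j_{2n+k}=2k-h_{2n+k}=k$, which equals the height of $D_F$ at $V_{2n+k}$, so $D_0$ must take only south steps from $V_{2n+k}$ onward. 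For $\tau=213$, the presence of a rook in column $n+k$ within the bottom $k$ rows forces the minimal Ferrers board $F_R$ to have its bottom $k$ rows of maximum width $n+k$, so $D_{F_R}$ also ends with $k$ south steps.

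For the inverse, I would start from $(D_0,D_F)\in\D_{n,k}^2$ and apply $\Delta_\tau^{-1}$ to recover a placement $(R,F)$ on a shape ending in $k$ south steps. Reversing the height (respectively width) argument shows that the $k$ rooks in the bottom $k$ rows of $F$ form an increasing sequence. Declaring these to be fp-arcs, deleting the $k$ appended closers, and replacing their matched openers by fixed points produces a unique matching $M$, and the case analysis from the second paragraph verifies $M\in\M_n^k(\tau)$. The main technical point is exactly this case analysis: the conditions in the definitions of $\M_n^k(321)$ and $\M_n^k(213)$ are designed precisely so that the enlargement step preserves $\tau$-avoidance, and once this is in hand the two desired bijections follow directly from Theorem~\ref{thm:1}.
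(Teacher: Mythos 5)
Your construction is the same as the paper's: append $k$ closers, match the fixed points to them in the nested fashion, check that the augmented matching is $\tau$-avoiding (with the five-vertex conditions in the definition of $\M_n^k(\tau)$ being exactly what this requires), and then verify that $\delta_\tau$ sends the resulting placements to pairs in $\D_{n,k}^2$ via the increasing-sequence property of the bottom $k$ rows. The overall architecture and the conclusion are right, but two of your justifications do not hold as written.

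First, in the avoidance check, your dismissal of the two-fp-arc case for $\tau=213$ is not valid. The $213$ configuration consists of arcs $(i_1,i_5)$, $(i_2,i_6)$, $(i_3,i_4)$, and of its three pairs of arcs only one pair crosses while the other two pairs \emph{nest}; so two mutually nested fp-arcs could a priori occupy, say, the roles $(i_1,i_5)$ and $(i_3,i_4)$. What actually rules this out is the positional fact you invoke only in the one-fp-arc case: the closer of every fp-arc lies to the right of every original vertex, so if two of the three closers $i_4<i_5<i_6$ are appended vertices, the remaining (original) closer cannot be the largest of the three, a contradiction. (For $\tau=321$ your nesting argument is fine, since all three pairs of arcs in that configuration cross.)

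Second, your argument that $D_{F_R}$ ends with $k$ south steps in the $213$ case rests on a false premise and a false implication. The rook in column $n+k$ corresponds to the last opener of the augmented matching, which may be an ordinary opener of $M$ rather than a fixed point; in that case this rook lies \emph{above} the bottom $k$ rows, so it is not ``within the bottom $k$ rows.'' Moreover, even granting a rook in column $n+k$ at some row $j\le k$, minimality of $F_R$ would only force rows $1,\dots,j$ to have width $n+k$, not rows $j+1,\dots,k$. The correct statement is that the rook in column $n+k$ always sits in a row of index at least $k$: the largest fixed point $x_k$ is an opener whose rook lies in row $k$, so the last opener is at least $x_k$ and its closer is either the appended vertex giving row $k$ or an original closer giving a row above $k$. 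From a rook in column $n+k$ at row $\ge k$, minimality of $F_R$ does give that rows $1,\dots,k$ of $F_R$ have width $n+k$, which is what you need. With these two repairs your proof matches the paper's.
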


\begin{proof}
In this proof we use $\tau$ to denote either of the patterns $321$ and $213$.
 Let $\R_n^k$ be the set of pairs $(R,F)\in\R_{n+k}$ such that the bottom $k$ rows of $F$ have length $n+k$, and the placement $R$ restricted to these rows is an increasing sequence.
Let $\R_n^k(\tau) = \R_n^k \,\cap\, \R_{n+k}(\tau)$.

We begin by establishing a bijection $\kappa'$ between $\M_n^k(\tau)$ and $\R_n^k(\tau)$. Given $M\in\M_n^k(\tau)$, add $k$ new vertices $2n+k+1, \ldots, 2n+2k$ and $k$ new edges $(x_i, 2n+2k+1-i)$, where
$x_1< \dots< x_k$ are the fixed points of $M$. Denote the resulting perfect matching by $M^+$.
The map $M \to M^+$ is a bijection from  $\M_n^k(\tau)$ to the set of matchings in $\M_{n+k}(\tau)$ with the property that the vertices $2n+k,\ldots,2n+2k$ are closers whose edges do not cross.
Define $\kappa'(M)$ to be $\kappa(M^+)$, where $\kappa$ is defined in Section~\ref{sec:matchings}. It is clear from the construction that $\kappa'(M)\in\R_n^k(\tau)$ and that $\kappa'$ is a bijection.

Recall from Section~\ref{sec:bij213} that for a fixed Ferrers board $F$, the map $\delta_\tau$ is a bijection between $\R_F(\tau)$ and $\D^2_F$.
Considering the disjoint unions $$\R_{n+k}(\tau)=\bigcup_{F\in\F_{n+k}} \R_F(\tau)  \quad\mathrm{and}\quad \D^2_{n+k}=\bigcup_{F\in\F_{n+k}} \D^2_F ,$$ it follows
that $\delta_\tau$ gives a bijection between $\R_{n+k}(\tau)$ and $\D^2_{n+k}$, which we also call $\delta_\tau$.
We now prove that $\delta_\tau$ restricts to a bijection between $\R_n^k(\tau)$ and $\D^2_{n,k}$.
Suppose that $(R,F)\in\R_{n+k}(\tau)$ and $\delta_\tau((R,F))=(D_0,D_1)$. Clearly, the bottom $k$ rows of $F$ have length $n+k$ if and only if $D_1$ ends with $k$ south steps.
It remains to show that, in this case, the restriction of $R$ to the bottom $k$ rows of $F$ forms an increasing sequence (call this the {\em $k$-increasing property}) if and only if the path $D_1$ also ends with $k$ south steps.
In the case $\tau = 213$, it follows from the description of the map $R\mapsto F_R$ and its inverse (in the paragraph preceding Theorem~\ref{thm:delta213})
that the $k$-increasing property is equivalent to the fact that the bottom $k$ rows of the minimal Ferrers board $F_R$ have length $n+k$.
In the case $\tau = 321$, the $k$-increasing property guarantees that the sequence $J(R)$ defined in Section~\ref{sec:bij213} ends with $k(k-1)\ldots 0$,
and the converse is also clear given that the bottom $k$ rows of $F$ have length $n+k$.

The composition $\delta_\tau\circ\kappa'$ is a bijection between $\M_n^k(\tau)$ and $\D_{n,k}^2$. An example of the bijection $\delta_{321}\circ\kappa'$ is given in Figure~\ref{fig:5}.
\end{proof}

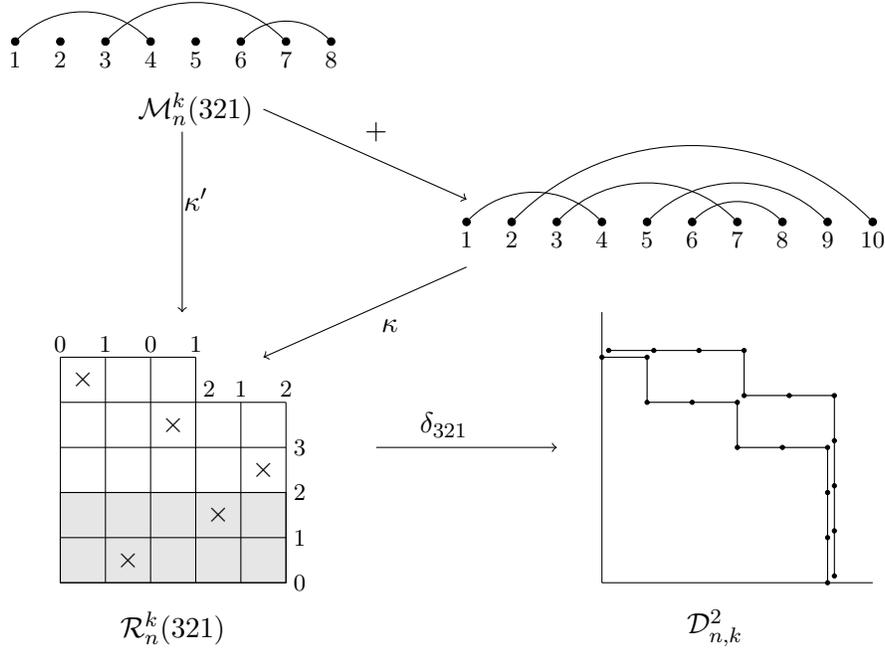
\begin{figure}[ht]
\begin{center}
\begin{tikzpicture}[scale=0.6]

\begin{scope}[shift ={(0,5)}]
	\foreach \i in {0,...,7}
	{
		\node[pnt] at (\i,0)(\i){};
	}
	\foreach \i / \j in {0/1,1/2,2/3, 3/4, 4/5, 5/6, 6/7, 7/8}
	{
		\draw (\i) node[below] {\footnotesize \j};
	}
	
	\draw(0)  to [bend left=45] (3);
	\draw(2)  to [bend left=45] (6);
	\draw(5)  to [bend left=45] (7);

    \node at (4, -1.5) {$\M_n^k(321)$};
    \draw[->] (3.7,-2) -- (3.7,-6);
    \node at (4, -3.5) {$\kappa'$};

    \draw[->] (5.5,-1.5) -- (10,-3.5);
        \node at (8, -2) {$+$};

\end{scope}

\begin{scope}[shift = {(10,1)}]
	\foreach \i in {0,...,9}
	{
		\node[pnt] at (\i,0)(\i){};
	}
	\foreach \i / \j in {0/1,1/2,2/3, 3/4, 4/5, 5/6, 6/7, 7/8,8/9,9/10}
	{
		\draw (\i) node[below] {\footnotesize \j};
	}	
	
	\draw(0)  to [bend left=45] (3);
	\draw(1)  to [bend left=45] (9);
	\draw(2)  to [bend left=45] (6);
	\draw(4)  to [bend left=45] (8);
	\draw(5)  to [bend left=45] (7);
	
	\draw[->] (0,-1) -- (-4.5,-3);
    	\node at (-1.7, -2.3) {$\kappa$};

\end{scope}

\begin{scope}[shift = {(1,-7)}]

	\draw[fill=gray!20] (0,0) rectangle (5,2);
	
	\foreach \i / \c / \r in {0/5/5,1/5/5,2/5/5, 3/5/5, 4/4/5, 5/4/3}
	{
		\draw (\i,0) -- (\i,\c);
		\draw (0,\i) -- (\r,\i);
	}

	\node at (0.5,4.5) {$\times$};
	\node at (1.5,0.5) {$\times$};
	\node at (2.5,3.5) {$\times$};
	\node at (3.5,1.5) {$\times$};
	\node at (4.5,2.5) {$\times$};
	
	\node at (0,5.3) {\footnotesize 0};
	\node at (1,5.3) {\footnotesize 1};
	\node at (2,5.3) {\footnotesize 0};
	\node at (3,5.3) {\footnotesize 1};
	
	\node at (3.3,4.3) {\footnotesize 2};
	\node at (4,   4.3) {\footnotesize 1};
	\node at (5,   4.3) {\footnotesize 2};
	
	\node at (5.3,3) {\footnotesize 3};
	\node at (5.3,2) {\footnotesize 2};
	\node at (5.3,1) {\footnotesize 1};
	\node at (5.3,0) {\footnotesize 0};
	
    	\draw[->] (7,3) -- (11,3);
	\node at (8.5, 3.5) {$\delta_{321}$};
    \node at (2.5, -1) {$\R_n^k(321)$};

\end{scope}

\begin{scope}[shift = {(13,-2)}]
    \node at (2.5, -6) {$\D_{n,k}^2$};
	\def\U{-- ++(1,0) [fill] circle(1.3pt)}
	\def\D{-- ++(0,-1) [fill] circle(1.3pt)}
	\draw (0,-5) -- (6,-5);
	\draw (0,-5) -- (0,1);
	\draw (0.15,0.15) circle(1.3pt) \U\U\U\D\U\U\D\D\D\D;
	\draw (0,0) circle(1.3pt) \U\D\U\U\D\U\U\D\D\D;
	
\end{scope}
\end{tikzpicture}
\caption{An example of the bijection $\delta_{321}\circ\kappa':\M_n^k(321)\rightarrow\D^2_{n,k}$. The shaded rows in the Ferrers board correspond to the two vertices added to the matching.}
\label{fig:5}
\end{center}
\end{figure}

\section{The patterns $231\sim312$}\label{sec:231}

\subsection{Background}\label{sec:231background}

The first proof of the fact that the patterns $231$ and $312$ are shape-Wilf-equivalent was given by Stankova and West~\cite{SW}. Later, Bloom and Saracino~\cite{BloSar} gave a more direct proof, constructing a bijection between $231$-avoiding and $312$-avoiding full rook placements of any given Ferrers board.
As shown in Figure~\ref{fig:arcpatterns}, the pattern $231$ (resp. $312$) in matchings and partitions consists of
two nested arcs crossed by a mutual arc, ending to the right (resp. left) of the nested arcs. In terms of matchings,
Bloom and Saracino's map translates into a bijection between $231$-avoiding and $312$-avoiding matchings that preserves the sequence of openers and closers.

The main ingredient in Bloom and Saracino's construction is a bijection between $231$-avoiding full rook placements of a given Ferrers board $F\in\F_n$ and certain labelings of the vertices on the border of $F$.
Recall that the vertices $V_0V_1\ldots V_{2n}$ are ordered from $(0,n)$ to $(n,0)$.

We define a \emph{labeled} Dyck path of semilength $n$ to be a pair $(D,\alpha)$ where $D\in\D_n$
and $\alpha = \alpha_0\alpha_1\ldots\alpha_{2n}$ is an integer sequence with the following monotonicity property: if $V_i$ is to the left of $V_{i+1}$ then $\alpha_i \leq \alpha_{i+1} \leq \alpha_i +1$, else $\alpha_i\geq \alpha_{i+1} \geq \alpha_i-1$. We think of $\alpha_i$ as the label of vertex $V_i$.

We say that two vertices $V_i=(x_i,y_i)$ and $V_j=(x_j,y_j)$ of $D$ are {\em aligned} if $x_i-x_j=y_i -y_j$ and the line segment connecting the points $V_i$ and $V_j$ lies strictly below $D$ (except for the endpoints of the segment, which are on $D$). We say that a labeled Dyck path $(D,\alpha)$ has the \emph{diagonal property} if for any two aligned vertices $V_i$ and $V_j$ with $i<j$, we have $\alpha_i\geq \alpha_j$.
We say $(D,\alpha)$ satisfies the \emph{0-condition} if $\alpha_0 = 0 = \alpha_{2n}$ and $\alpha$ contains no consecutive zeros (equivalently, for each $i$ one has $\alpha_i=0$ if and only if $V_i$ lies on the diagonal $y=n-x$).
For $F\in \F_n$, we denote by $\L_F$ the set of labelings $(D_F,\alpha)$ of the boundary of $F$ that satisfy both the diagonal property and the 0-condition.  We also let $\L_n = \bigcup_{F\in \F_n} \L_F$.

Bloom and Saracino's bijection~\cite{BloSar} between placements and labeled Dyck paths is the map $\Pi:\R_F(312)\to \L_F$ that sends $(R,F)\in\R_F(312)$ to the pair $(D_F,\alpha)$ where, for $0\le i\le 2n$, the label $\alpha_i$ is the length of the longest increasing sequence in $R\,\cap\, \Gamma(V_i)$. In a slight abuse of notation, we also denote by $\Pi$ the bijection induced by $\Pi$ from $\R_n(312) = \bigcup_{F\in \F_n} \R_F(312)$ to $\L_n = \bigcup_{F\in F_n} \L_F$.

\subsection{$312$-avoiding matchings}\label{sec:312matchings}

In this section we enumerate $312$-avoiding matchings, or equivalently, $231$-avoiding ones. 

\begin{theorem}\label{thm:312matchings}
The generating function for $312$-avoiding matchings is $$\sum_{n\ge 0} |\M_n(312)|z^n=\frac{54z}{1+36z-(1-12z)^{3/2}}.$$
The asymptotic behavior of is coefficients is given by
\beq\label{eq:asym312mat}|\M_n(312)|\sim \frac{3^3}{2^5\sqrt{\pi n^5}}\,12^n.\eeq
\end{theorem}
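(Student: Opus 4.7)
The plan is to use the Bloom--Saracino bijection $\Pi$ from Section~\ref{sec:231background}, composed with $\kappa$, to identify $\M_n(312)$ with the set $\L_n$ of labeled Dyck paths of semilength $n$ satisfying the diagonal property and the 0-condition. Thus it suffices to compute the generating function $F(z) = \sum_{n \geq 0} |\L_n|\, z^n$.

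I would derive a functional equation for $F(z)$ via a first-return decomposition: a nonempty element $(D,\alpha) \in \L_n$ splits uniquely into an \emph{arch} (the portion of the path from $V_0$ until its first return to the main diagonal) followed by a shorter path in $\L$. The 0-condition forces every interior vertex of an arch to carry a positive label, and the labels immediately after the initial east step and immediately before the final south step must equal $1$.

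The main obstacle will be enumerating labeled arches: stripping away the outer east and south steps and shifting the interior labels down by $1$ yields a labeled path whose diagonal property is preserved, but whose 0-condition relative to the inner baseline fails, so arches do not themselves decompose directly into smaller elements of $\L$. To overcome this I would introduce a bivariate refinement $F(z,u)$ with a catalytic variable $u$ tracking an auxiliary statistic, for example the number of vertices at which the labeling of the arch interior attains its minimum value. The resulting polynomial functional equation can then be solved by the kernel method, or by conjecturing and verifying the algebraic equation
\[
(1+4z)^2 F(z)^2 - (1+36z)\, F(z) + 27z = 0,
\]
whose branch satisfying $F(0)=1$ is readily seen to equal the claimed generating function. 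This equivalence becomes especially transparent via the substitution $w = \sqrt{1-12z}$, under which the closed form simplifies to $F(z) = 9(1+w)/\bigl(2(w+2)^2\bigr)$.

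Finally, the asymptotic estimate follows from standard singularity analysis. The function $F(z)$ has a unique dominant singularity at $z=1/12$, and expanding $F$ locally around $w=0$ identifies the leading singular term as a constant multiple of $(1-12z)^{3/2}$. Applying the transfer theorem with $\Gamma(-3/2) = 4\sqrt{\pi}/3$ then extracts the claimed asymptotic growth rate of order $12^n/n^{5/2}$ for $|\M_n(312)|$.
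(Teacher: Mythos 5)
Your overall strategy coincides with the paper's: pass to labeled Dyck paths via $\Pi\circ\kappa$, peel off the first arch, observe that arch interiors retain the diagonal property but lose the 0-condition, enlarge the class of labeled paths accordingly, and introduce a catalytic variable to close the recursion before finishing with singularity analysis. Your conjectured algebraic equation is also correct: since $(1+36z)^2-108z(1+4z)^2=(1-12z)^3$, the quadratic $(1+4z)^2F^2-(1+36z)F+27z=0$ has the claimed generating function as its branch with $F(0)=1$, and the substitution $w=\sqrt{1-12z}$ indeed gives $F=9(1+w)/\bigl(2(2+w)^2\bigr)$.

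The gap is at the central step: you never pin down a catalytic statistic for which the recursion actually closes, and the one you float (the number of vertices at which the arch interior attains its minimum label) is not the right one --- there is no evident way to express that statistic for a concatenation or an elevation in terms of the same statistic on the pieces. The paper's choice is to work in the class $\K_n$ of labeled paths with the diagonal property and \emph{last} label $0$ (first label unconstrained) and to track the \emph{value of the first label} via $u^{\alpha_0}$. This is exactly the statistic compatible with the gluing operation $\oplus$ (which shifts all labels of the first factor up by the first label of the second) and with elevation; a four-case analysis of the labels at the two ends of an arch then yields
\[
K(u,z)=1+zK(u,z)\left(2K(u,z)+uK(u,z)+\frac{K(u,z)-K(0,z)}{u}\right),
\]
after which $L(z)=1/(1-zK(0,z))$. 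Note that this equation is quadratic in the unknown bivariate series, so it is resolved by Tutte's quadratic method rather than the plain kernel method; and ``conjecturing and verifying'' the quadratic for $F(z)$ still presupposes having such a functional equation to verify against, so this step cannot be skipped. Finally, if you carry out your $w$-expansion you will find the singular part $\tfrac{9}{32}(1-12z)^{3/2}$, hence the constant $\tfrac{9}{32}\cdot\tfrac{1}{\Gamma(-3/2)}=\tfrac{3^3}{2^7\sqrt{\pi}}$ (which agrees with the paper's intermediate expression and with the numerical data, though not with the constant $3^3/2^5$ displayed in the theorem); you should record this constant explicitly rather than only the order of growth $12^n n^{-5/2}$.
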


\begin{proof}
We first translate the problem into an enumeration of labeled Dyck paths.
The composition $\Pi\circ\kappa$ is a bijection between $\M_n(312)$ and $\L_n$, so we have
$$L(z) =\sum_{n\ge 0} |\M_n(312)|z^n=\sum_{n\ge 0}|\L_n|z^n.$$
We will find an expression for $L(z)$ using the recursive structure of Dyck paths: every $D\in\D_n$ with $n\ge1$ uniquely decomposes as $eD_1sD_2$ where $e$ is an east step, $s$ is a south step, and $D_1$ and $D_2$ are Dyck paths. Even though this decomposition can be extended to deal with labeled Dyck paths by transferring the label on each vertex of $D$ to the corresponding vertex of $eD_1s$ or $D_2$,
the fact that the labels on $eD_1s$ satisfy the 0-condition does not guarantee that the labels on $D_1$ do,
even if their values are decreased by 1.

To deal with this problem, we relax the 0-condition and consider the larger set $\K_n$ consisting of all labeled Dyck paths $(D,\alpha)$ of semilength $n$ that have the diagonal property and satisfy $\alpha_{2n}=0$. Let
$\K=\bigcup_{n\ge 0}\K_n$, and denote by
$$K(u,z)=\sum_{0 \leq n} \sum_{(D,\alpha)\in\K_n}u^{\alpha_0}z^n$$ the generating function for such paths according to the value of the first label.

To obtain an equation for $K(u,z)$, first consider the following operation: given $(A,\alpha)\in \K_i$,
$(B,\beta)\in \K_j$, let $(A,\alpha)\oplus (B,\beta)\in\K_{i+j}$ be the concatenation of Dyck paths $AB$ with labels $(\alpha_0+\beta_0)(\alpha_1+\beta_0)\ldots(\alpha_{2i}+\beta_0)\beta_1\ldots\beta_{2j}$.
In other words, the labels along $A$ are increased by $\beta_0$, and the labels along $B$ do not change.
Every nonempty $(D,\gamma)\in\K$ can be decomposed uniquely as $(D,\gamma)=(eD_1s,\alpha)\oplus (D_2, \beta)$ where $(eD_1s,\alpha),(D_2, \beta)\in \K$. Whereas $(D_2, \beta)$ is an arbitrary element of $\K$,
the labeling $\alpha$ of the elevated Dyck path $eD_1s$ can be of four different types,
according to whether $\alpha_0=\alpha_1$ and whether $\alpha_{2i-1}=\alpha_{2i}$, where $i$ is the semilength of
$eD_1s$. The four possibilities are shown in Figure~\ref{fig:DecompositionOfLabledPath}. In cases (a) and (b), $(D_1,(\alpha_1-1)\ldots(\alpha_{2i-1}-1))$ is an arbitrary element of $\K_{i-1}$. In case (c), so is $(D_1,\alpha_1\ldots\alpha_{2i-1})$. In case (d), $(D_1,\alpha_1\ldots\alpha_{2i-1})$ is an element of $\K_{i-1}$ with $\alpha_1\ge1$.

\begin{figure}[ht]
\begin{center}
\begin{tikzpicture}[scale=0.4]
\def\U{-- ++(1,0) [fill] circle(1.3pt)}
\def\D{-- ++(0,-1) [fill] circle(1.3pt)}
\begin{scope}[shift = {(0,0)}]
	\draw(0,4) --(4,0);	
	\draw (0,4) circle(1.3pt) \U\U;
	\draw (4,2) circle(1.3pt) \D\D;
	\node at (3.5, 3.5) {$\ddots$};
	\draw [thick,decoration={brace,mirror},decorate] (4.6,1) -- (4.6,4);
	\node at (5.5,2.5) {$D_1$};
	
	\node at (-.2,4.4) {\footnotesize $a-1$};
	\node at (1,4.4) {\footnotesize $a$};
	
	\node at (4.3,0) {\footnotesize 0};
	\node at (4.3,1) {\footnotesize 1};
	\node at (3,-1.5) {(a)};
\end{scope}
\begin{scope}[shift={(9,0)}]	
	\draw(0,4) --(4,0);
	\draw (0,4) circle(1.3pt) \U\U;
	\draw (4,2) circle(1.3pt) \D\D;
	\node at (3.5, 3.5) {$\ddots$};
	\draw [thick,decoration={brace,mirror},decorate] (4.6,1) -- (4.6,4);
	\node at (5.5,2.5) {$D_1$};
	
	\node at (-.2,4.4) {\footnotesize $a$};
	\node at (1,4.4) {\footnotesize $a$};
	
	\node at (4.3,0) {\footnotesize 0};
	\node at (4.3,1) {\footnotesize 1};
	\node at (3,-1.5) {(b)};
\end{scope}
\begin{scope}[shift = {(18,0)}]
	\draw(0,4) --(4,0);
	\def\U{-- ++(1,0) [fill] circle(1.3pt)}
	\def\D{-- ++(0,-1) [fill] circle(1.3pt)}
	\draw (0,4) circle(1.3pt) \U\U;
	\draw (4,2) circle(1.3pt) \D\D;
	\node at (3.5, 3.5) {$\ddots$};
	\draw [thick,decoration={brace,mirror},decorate] (4.6,1) -- (4.6,4);
	\node at (5.5,2.5) {$D_1$};
	
	\node at (-.2,4.4) {\footnotesize $a$};
	\node at (1,4.4) {\footnotesize $a$};
	
	\node at (4.3,0) {\footnotesize 0};
	\node at (4.3,1) {\footnotesize 0};
	\node at (3,-1.5) {(c)};
\end{scope}
\begin{scope}[shift = {(27,0)}]
	\draw(0,4) --(4,0);
	\def\U{-- ++(1,0) [fill] circle(1.3pt)}
	\def\D{-- ++(0,-1) [fill] circle(1.3pt)}
	\draw (0,4) circle(1.3pt) \U\U;
	\draw (4,2) circle(1.3pt) \D\D;
	\node at (3.5, 3.5) {$\ddots$};
	\draw [thick,decoration={brace,mirror},decorate] (4.6,1) -- (4.6,4);
	\node at (5.5,2.5) {$D_1$};
	
	\node at (-.2,4.4) {\footnotesize $a-1$};
	\node at (1,4.4) {\footnotesize $a$};
	
	\node at (4.3,0) {\footnotesize 0};
	\node at (4.3,1) {\footnotesize 0};
	\node at (3,-1.5) {(d)};
\end{scope}
\end{tikzpicture}
\caption{The four possible labelings of the path $eD_1s$.}
\label{fig:DecompositionOfLabledPath}
\end{center}
\end{figure}
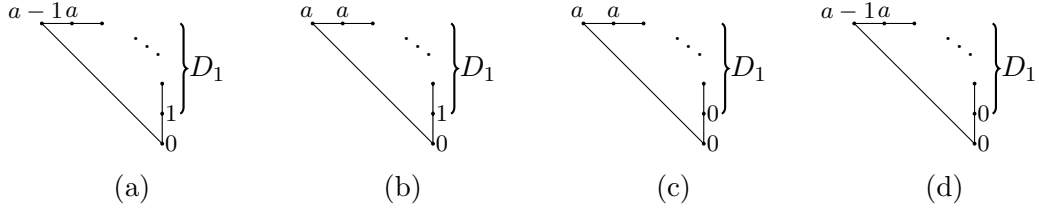

This decomposition translates into the functional equation
\beq\label{eq:Ll} K(u,z)=1+zK(u,z)\left(2K(u,z)+uK(u,z)+\frac{K(u,z)-K(0,z)}{u}\right).\eeq
To solve this equation we use the quadratic method, due to Tutte, as described in~\cite[p.~515]{FS}.
Completing the square in~\eqref{eq:Ll} we get
\beq\label{eq:Ll_square}4(u+1)^4z^2\left(K(u,z)-\frac{u+zK(0,z)}{2(u+1)^2z}\right)^2=(u+zK(0,z))^2-4u(u+1)^2z.\eeq
Let $g(u,z,K(0,z))$ be the right hand side of Eq.~\eqref{eq:Ll_square}.
Let $u(z)$ be a (yet unknown) function of $z$ such that, when substituted for $u$ in the left hand side of~\eqref{eq:Ll_square}, it vanishes, that is,
$$2(u(z)+1)^2zK(u(z),z)-u(z)-zK(0,z)=0.$$ Now, since the left hand side has a double root in $u$, so does the right hand side, so
$g(u(z),z,K(0,z))$ and
$\frac{\partial g}{\partial u}(u(z),z,K(0,z))=0$. Solving this system of two equations for the two unknowns $u(z)$ and $K(0,z)$, we obtain
\beq\label{eq:Ll0z} K(0,z)=-\frac{1-18z-(1-12z)^{3/2}}{54z^2}.\eeq
Substituting in~\eqref{eq:Ll}, we can derive an expression for $K(u,z)$, which is algebraic of degree~4.

Finally, to find $L(z)$, observe that for any $(D,\alpha)\in\L_n$, $D$ can be decomposed uniquely as $D=eA_1seA_2s\dots$, where each $A_j$ is a Dyck path, and if we let $\alpha^{(j)}$ be its sequence of labels decreased by one, then $(A_j,\alpha^{(j)})$ is an arbitrary element of $\K$ with $\alpha^{(j)}_0=0$.
It follows that
$$L(z)=\frac{1}{1-zK(0,z)}=\frac{54z}{1+36z-(1-12z)^{3/2}}=\frac{1+36z+(1-12z)^{3/2}}{2(1+4z)^2}.$$

To find the asymptotic behavior of the coefficients, note that the singularity of $L(z)$ nearest to the origin is a branch point at $z=1/12$. By~\cite[Corollary~VI.1]{FS}, its coefficients satisfy
$$|\M_n(231)|\sim \frac{9}{32}\frac{n^{-5/2}}{\Gamma(-3/2)} 12^n$$ as claimed.
\end{proof}

We remark that expression~\eqref{eq:Ll0z} shows that $K(0,z)$ agrees with the generating function for planar rooted maps given in~\cite[Proposition~VII.11]{FS} and in the original paper by Tutte~\cite{Tutte}. In particular,
the coefficient of $z^n$ in $K(0,z)$ is $$\frac{2\,3^n\,(2n)!}{n!(n+2)!}.$$ This discovery has lead Bloom
to construct a bijection between the set of labeled Dyck
paths $(D,\alpha)\in\K_n$ with $\alpha_0=0$ and the set of planar rooted maps with $n$ edges~\cite{Bloommaps}.

It is interesting to observe that the generating function in Theorem~\ref{thm:312matchings} is algebraic, in contrast with the fact that the generating function for $123$-avoiding (namely, $3$-noncrossing) matchings is D-finite but not algebraic~\cite{Gou,CDDSY}. The first terms of these sequences, along with that of $132$-avoiding matchings, are given in Table~\ref{tab:matchings}.

Stankova~\cite{Stankova} showed that $|\M_F(231)|\le|\M_F(123)|\le|\M_F(132)|$ for any Ferrers board $F$, and she characterized for which boards $F$ the inequalities are strict.
Adding the first inequality over all $F\in\F_n$, we get $|\M_n(231)|\le|\M_n(123)|$ for all $n$, with strict inequality for $n\ge4$. The asymptotic estimate for $|\M_n(312)|$ given in
Theorem~\ref{thm:312matchings} allows us to quantify the magnitude of this inequality for large $n$. While the exponential growth rate of $|\M_n(231)|$ is $12$, as given by~\eqref{eq:asym312mat},
the growth rate of $|\M_n(123)|$ is 16, since
$$|\M_n(123)|=C_{n+2}C_n-C_{n+1}^2\sim\frac{24}{\pi n^5}\,16^n.$$

\subsection{$312$-avoiding partitions}

A refinement of the methods from Section~\ref{sec:312matchings} can be used to enumerate $312$-avoiding partitions, or equivalently, $231$-avoiding ones.

\begin{theorem}\label{thm:312partitions}
The generating function $B(z)=\sum_{n\ge 0}|\P_n(312)|z^n$ for $312$-avoiding partitions is a root of the cubic polynomial
\begin{multline}\label{eq:polHz}(z-1)(5z^2-2z+1)^2 B^3+(-9z^5+54z^4-85z^3+59z^2-14z+3)B^2\\ +(-9z^4+60z^3-64z^2+13z-3)B+(-9z^3+23z^2-4z+1).\end{multline}
The asymptotic behavior of its coefficients is given by
\beq\label{eq:asym312par}|\P_n(312)|\sim \delta n^{-5/2}\,\rho^{n},\eeq
where $$\rho=\frac{3(9+6\sqrt{3})^{1/3}}{2+2(9+6\sqrt{3})^{1/3}-(9+6\sqrt{3})^{2/3}}\approx6.97685$$
and $\delta\approx0.061518$.
\end{theorem}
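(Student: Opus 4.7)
The plan is to extend the enumeration in the proof of Theorem~\ref{thm:312matchings} to track the number of valleys in the matching, and then apply the transformation~\eqref{eq:B} to pass from matchings to partitions. To this end, define the refined generating function
\begin{equation*}
K(u,v,z) = \sum_{(D,\alpha)\in\K} u^{\alpha_0}\,v^{\val(D)}\,z^{|D|/2},
\end{equation*}
which additionally records the number of valleys of the underlying Dyck path. Reusing the decomposition $(D,\gamma) = (eD_1 s,\alpha) \oplus (D_2,\beta)$ from the proof of Theorem~\ref{thm:312matchings}, the only valleys of $D$ not already accounted for by those of $D_1$ and $D_2$ come from the single junction $s\cdot e$ formed when $D_2$ is nonempty (the junctions inside $eD_1 s$ are of type $ee$ or $ss$). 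The four-case analysis of the labeling of $eD_1 s$ then yields the functional equation
\begin{equation*}
K - 1 = z\left(2K + uK + \frac{K - K(0,v,z)}{u}\right)\bigl(1 + v(K-1)\bigr),
\end{equation*}
where $K = K(u,v,z)$; this specializes to~\eqref{eq:Ll} at $v=1$.

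Next, apply the quadratic method as in the proof of Theorem~\ref{thm:312matchings}. The equation above is quadratic in $K$; multiplying through by $u$ and completing the square puts it in the form $(2aK+b)^2 = b^2-4ac$, whose right-hand side is a polynomial in $u$ with coefficients depending on $v$, $z$, and $K(0,v,z)$. Imposing a double root at some $u = u_*(v,z)$ produces two equations in the two unknowns $u_*(v,z)$ and $K(0,v,z)$; eliminating $u_*$ yields an algebraic equation for $K(0,v,z)$, which specializes at $v=1$ to~\eqref{eq:Ll0z}.

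To translate back to matchings, recall that every $(D,\alpha)\in\L_n$ decomposes by returns to the diagonal as $D = eA_1 s\,eA_2 s\cdots eA_k s$ with each $(A_j,\alpha^{(j)})$ an arbitrary element of $\K$ satisfying $\alpha^{(j)}_0 = 0$. This decomposition creates $k-1$ extra valleys at the junctions between consecutive elevated pieces, so
\begin{equation*}
A(v,z) = 1 + \sum_{k\ge1} v^{k-1}\bigl(zK(0,v,z)\bigr)^k = \frac{1 + (1-v)\,zK(0,v,z)}{1 - v\,zK(0,v,z)}.
\end{equation*}
Plugging into~\eqref{eq:B} and simplifying gives
\begin{equation*}
B(z) = \frac{(1-z) - z\kappa}{(1-z)^2 - z\kappa}, \qquad \kappa := K\!\left(0,\tfrac{1}{z},\tfrac{z^2}{(1-z)^2}\right).
\end{equation*}
Solving this linear relation for $\kappa$ in terms of $B$ and $z$, and substituting into the algebraic equation for $K(0,v,z)$ from the previous paragraph (evaluated at $v = 1/z$ with $z$ replaced by $z^2/(1-z)^2$), yields a polynomial equation in $B(z)$ and $z$. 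Simplification---routine but lengthy, and most conveniently performed with a computer algebra system---should give the cubic~\eqref{eq:polHz}.

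For the asymptotics, the dominant singularity $\rho^{-1}$ of $B(z)$ is the smallest positive root of the discriminant (with respect to $B$) of~\eqref{eq:polHz}. The remark following Theorem~\ref{thm:312matchings} identifies $K(0,z)$ with Tutte's generating function for planar rooted maps, which exhibits a cusp-type singularity; a parallel analysis here is expected to produce a Puiseux expansion of the form $B(z) = B_0 + B_1(1-\rho z) + B_{3/2}(1-\rho z)^{3/2} + O\bigl((1-\rho z)^2\bigr)$ near $z = \rho^{-1}$, and the standard transfer theorem~\cite[Corollary VI.1]{FS} then gives $|\P_n(312)| \sim \delta\,n^{-5/2}\rho^n$ with $\delta = B_{3/2}/\Gamma(-3/2)$. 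The main obstacle is the algebraic elimination that produces~\eqref{eq:polHz}: the naive resultant computations generate polynomials of high degree that must be reduced to the compact cubic form stated. A secondary subtlety is confirming that the Puiseux expansion at $z=\rho^{-1}$ has a nonzero $(1-\rho z)^{3/2}$ coefficient (rather than only a square-root term), as this is what distinguishes the $n^{-5/2}$ exponent from an $n^{-3/2}$ one.
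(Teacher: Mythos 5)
Your proposal follows essentially the same route as the paper: the valley-refined functional equation you derive is identical to the paper's Eq.~\eqref{eq:Llv}, the quadratic/kernel method, the return decomposition giving $L(v,z)=1+zK(0,v,z)/(1-vzK(0,v,z))$, the substitution via Eq.~\eqref{eq:B}, the resultant elimination, and the singularity analysis of the resulting cubic all match the paper's argument. The computational steps you defer to a computer algebra system are likewise only summarized in the paper, so there is no substantive gap.
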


\begin{proof}
To apply the argument presented in Section~\ref{sec:partitions}, we need to count $312$-avoiding matchings keeping track of the number of closers immediately followed by an opener, which we called valleys.
Via the bijection $\Pi\circ\kappa:\M_n(312)\rightarrow\L_n$, this is equivalent to counting labeled paths in $\L_n$ with
respect to the number of valleys.  We proceed as in the proof of Theorem~\ref{thm:312matchings}, but additionally keeping track of valleys.
We start by finding a generating function $K(u,v,z)$ for paths in $\K$ that refines $K(u,z)$ by marking the number of valleys with the the variable $v$.
The same argument that led to~\eqref{eq:Ll} now gives
\beq\label{eq:Llv}K(u,v,z)=1+z(vK(u,v,z)-v+1)\left(2K(u,v,z)+uK(u,v,z)+\frac{K(u,v,z)-K(0,v,z)}{u}\right),\eeq
since the number of valleys of $eD_1sD_2$ is the sum of the number of valleys of $D_1$ and $D_2$, plus one unless $D_2$ is empty.
Applying the quadratic method to~\eqref{eq:Llv} we obtain
$$K(0,v,z)=\frac{(4(v-1)^2z-2v-1)z S^2 + (8(z-1)^2z^2-2(2v+3)z+1) S + 4(v-1)^2z-2v-1)z}{4v(v-1)(S+1)z^2-2vz},$$
where $S=S(v,z)$ is a root of the cubic polynomial
$$4(v-1)^2z^2 S^3+(8(v-1)^2z-4v-5)z S^2+(4(v-1)^2z^2-2(2v+1)z+1)S-z.$$
To find the generating function $L(v,z)$ for paths in $\L_n$ that refines $L(z)$ by marking the number of valleys with $v$,
note that in the decomposition $D=eA_1seA_2s\dots$, a valley $se$ is created between each $A_i$ and $A_{i+1}$, so
$$L(v,z)=\frac{1/v}{1-vzK(0,v,z)}-\frac{1}{v}+1.$$
This generating function counts $312$-avoiding matchings with respect to the number of closers immediately followed by an opener.
To construct set partitions from matchings, each closer-opener pair can be independently merged into a transitory vertex or not.
Using Eq.~\eqref{eq:B}, the generating function for $312$-avoiding partitions is
$$B(z)=\frac{1}{1-z}L\left(\frac{1}{z},\frac{z^2}{(1-z)^2}\right)=\frac{c_2(z)R(z)^2+c_1(z)R(z)+c_0(z)}{(1-z)(3-7z)(1-2z+5z^2)^2},$$
where \begin{align*} c_2(z)&=4z^3(12z^3+7z^2-26z+3),\\ c_1(z)&=z^2(48z^4-179z^3+50z^2+81z-12),\\ c_0(z)&=48z^6-211z^5+270z^4-168z^3+73z^2-19z+3,\end{align*} and
$R(z)$ is a root of $4z^2R^3+(3z^2-4z)R^2+(3z^2-6z+1)R-z^2$. By computing the appropriate resultant, it follows that $B(z)$ is a root of the polynomial~\eqref{eq:polHz}.

To describe the asymptotic growth of its coefficients, we use the method described in~\cite[Section VII.7.1]{FS} to compute the singularities of algebraic functions. If $P(z,B)$ is the polynomial in Eq.~\eqref{eq:polHz},
its exceptional set is given by the zeroes of its discriminant $$z^9(z-1)(1-2z+5z^2)^2(27z^3-54z^2+63z-8)^3.$$
The dominant singularity is the exceptional point
$$\xi=-\frac{1}{3}(9+6\sqrt{3})^{1/3}+(9+6\sqrt{3})^{-1/3}+\frac{2}{3}\approx 0.14333,$$
which is the branch point closest of the origin.
For $z$ near $\xi$, the three branches of the cubic~\eqref{eq:polHz} give rise to one branch that is analytic with
value approximately $1.0368$ and a cycle of two conjugate branches with value $\alpha\approx1.2146$ at
$z = \xi$. The expansion of the two conjugate branches is of the singular type,
$$\alpha+\beta(\xi-z)\pm\gamma(\xi-z)^{3/2}+\dots,$$
where $\beta\approx-2.7077$ and $\gamma\approx2.6795$. It follows that
$$|\P_n(231)|\sim \gamma\xi^{1/2}\frac{n^{-5/2}}{\Gamma(-3/2)}\,\xi^{-n}=\frac{3\gamma}{4\sqrt{\pi n^5}}\,\xi^{-n+3/2}.$$
Letting $\rho=\xi^{-1}$ and $\delta=3\gamma\xi^{3/2}/(4\sqrt{\pi})$ we get the expression in the statement.
\end{proof}

Again, the generating function in Theorem~\ref{thm:312partitions} is algebraic, in contrast with the fact that the generating function for $123$-avoiding (namely, $3$-noncrossing) partitions is D-finite but not algebraic~\cite{BMXin}.
The first terms of the sequence are given in Table~\ref{tab:partitions}.

As in the case of matchings, from Stankova's  inequality $|\M_F(231)|\le|\M_F(123)|$ for any Ferrers board $F$ \cite{Stankova}, it follows that
$|\P_n(231)|\le|\P_n(123)|$ for all $n$. We can use Theorem~\ref{thm:312partitions} to see that the exponential growth
rates of $|\P_n(231)|$ are $|\P_n(123)|$ are different. While the growth rate of $|\P_n(231)|$ is $\rho\approx6.97685$, as given by~\eqref{eq:asym312par},
the growth rate of $|\P_n(123)|$ is 9. This was shown by Bousquet-M\'elou and Xin~\cite{BMXin}, who proved that the number of $3$-noncrossing partitions grows like
$$|\P_n(123)|\sim\frac{3^9\,5\sqrt{3}}{2^5\,\pi}\,\frac{9^n}{n^7}.$$

\subsection{An application to $1342$-avoiding permutations}\label{sec:Bona}

The method involving labeled Dyck paths that we have developed to enumerate $312$-avoiding matchings and partitions can be used to derive the following generating function due to B\'ona~\cite{Bona}
for the number of $1342$-avoiding permutations (which, by symmetry, equals the number of $3124$-avoiding ones).

\begin{theorem}[\cite{Bona}]
$$\sum_{n\geq0} |\S_n(1342)| z^n=\frac{32z}{1+20z-8z^2-(1-8z)^{3/2}}.$$
\end{theorem}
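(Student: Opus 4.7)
The plan is to extend the labeled-Dyck-path machinery from the proof of Theorem~\ref{thm:312matchings} to enumerate $1342$-avoiding permutations. The key structural fact, easily verified by case analysis, is that the skew sum $\alpha\ominus\beta$ of two $1342$-avoiding permutations is again $1342$-avoiding: any mixed pattern would require a small value at a late position paired with a large value at an early position, contradicting the skew-sum ordering. Consequently, every $1342$-avoiding permutation decomposes uniquely as a skew sum of skew-indecomposable $1342$-avoiders, so
\begin{equation*}
\sum_{n\ge 0}|\S_n(1342)|z^n = \frac{1}{1-J(z)},
\end{equation*}
where $J(z)$ is the generating function for nonempty skew-indecomposable $1342$-avoiders. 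It thus suffices to compute $J(z)$.

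First I would describe skew-indecomposable $1342$-avoiding permutations via a class of labeled Dyck paths, by adapting the Bloom--Saracino bijection of Section~\ref{sec:231background}. The underlying Dyck path should encode the recursive structure of the permutation (for instance via the positions of left-to-right maxima and the valley pattern they induce), while the labels at each vertex should track the length of the longest increasing subsequence of the induced sub-placement, generalizing the rule used for $312$-avoiding placements. The $1342$-avoidance condition, beyond the $312$-avoidance already encoded by the Bloom--Saracino labeling, is expected to translate into an additional monotonicity requirement on the labels at specific vertices of the Dyck path.

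Next, I would derive a functional equation for the bivariate generating function $\tilde K(u,z)$ of this labeled-path class, marking by $u$ the first label. Using the same $\oplus$-decomposition $(D,\gamma)=(eD_1 s,\alpha)\oplus(D_2,\beta)$ as in Theorem~\ref{thm:312matchings}, with the four cases of Figure~\ref{fig:DecompositionOfLabledPath} revisited and weighted to reflect the new allowed configurations, I expect an equation of the schematic form
\begin{equation*}
\tilde K(u,z) = 1 + z\tilde K(u,z)\biggl(A(u,z)\tilde K(u,z) + B(u,z)\,\frac{\tilde K(u,z) - \tilde K(0,z)}{u} + C(u,z)\biggr)
\end{equation*}
with polynomial coefficients $A,B,C$ differing from those in Eq.~\eqref{eq:Ll}. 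Applying the quadratic method verbatim---complete the square, choose a series $u(z)$ that makes both the right-hand side and its $u$-derivative vanish, and solve the resulting algebraic system---produces an expression for $\tilde K(0,z)$ whose dominant singularity is a branch point of type $(1-8z)^{3/2}$.

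Finally, setting $J(z) = z\tilde K(0,z)$ (or an analogous closed form in terms of $\tilde K(0,z)$) expresses the generating function for skew-indecomposable $1342$-avoiders, and substituting into $1/(1-J(z))$ yields the claimed formula $\frac{32z}{1+20z-8z^2-(1-8z)^{3/2}}$ after simplification. The principal obstacle is identifying the precise refinement of $\K_n$ adapted to $1342$-avoidance: since the pattern has length four rather than three, the extra structure on the labels must be tuned so that the quadratic method outputs the factor $8$ appearing in $(1-8z)^{3/2}$, rather than the factor $12$ from the $312$-avoiding matching version of the argument.
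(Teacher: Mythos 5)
Your outer skeleton matches the paper's proof: the paper also decomposes at the returns of the Dyck path (which is exactly the skew-sum decomposition into skew-indecomposable components, giving a generating function of the form $1/(1-zK^\times(0,z))$), also works with a peak-restricted subclass of the labeled paths $\K_n$ from Theorem~\ref{thm:312matchings}, and also finishes with the quadratic method. But there is a genuine gap at the heart of your argument, and you flag it yourself when you say the ``principal obstacle is identifying the precise refinement of $\K_n$.'' That identification is not a tuning step; it is the main content of the proof, and your proposed route to it --- translating $1342$-avoidance directly into ``an additional monotonicity requirement on the labels'' --- does not work as stated, because the Bloom--Saracino map $\Pi$ is only defined on $312$-avoiding placements, and a general $1342$-avoiding permutation need not be $312$-avoiding (the permutation $312$ itself avoids $1342$). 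So there is no labeling of an arbitrary $1342$-avoider to which one could add conditions.

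The missing idea is a reduction of the length-$4$ pattern to a length-$3$ pattern plus a geometric condition on the board. The paper first replaces $1342$ by the symmetric pattern $3124$, which ends in its largest entry; then Lemma~\ref{lem:patterns and corners} shows that, because $\tau(k)=k$ and $\tau(k-1)\ne k-1$, the map $\chi$ identifies $\S_n(3124)$ with the \emph{board-minimal} placements in $\R_n(312)$ (avoidance now in the Ferrers-board sense, i.e., only on the rectangles $\Gamma(V)$), since the forced rook at each peak supplies the ``$4$'' of any occurrence of $312$ below it. Lemma~\ref{lem:b} then translates board-minimality into the purely local peak property on the labels, which is what changes cases (b) and (c) of the decomposition (they now require $D_1$ nonempty) and produces the functional equation whose solution has the $(1-8z)^{3/2}$ singularity. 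Without these two lemmas your plan cannot be executed; with them, it becomes the paper's proof.
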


B\'ona~\cite{Bona} obtained this formula by constructing a bijection between so-called indecomposable $1342$-avoiding permutations and certain labeled trees, called
$\beta(0,1)$-trees. He then used the fact that the generating function for $\beta(0,1)$-trees had already been found by Tutte~\cite{Tutte}.
Our approach provides a more direct method to enumerate $1342$-avoiding permutations without using $\beta(0,1)$-trees.

We begin with a few straightforward definitions. We say that a rook placement $(R,F) \in \R_n$ is \emph{board minimal} if $F$ is the smallest Ferrers board that contains $R$.  Equivalently, $(R,F)$ is board minimal if and only if for every peak $V$ on $F$ the unit square of $F$ whose north-east vertex is $V$ belongs to $R$, i.e., it contains a rook. Denote by $\R_n^\times$ the set of placements in $\R_n$ that are board minimal, and similarly, for any pattern $\tau$, let
$\R_n(\tau)^\times$ be the the set board minimal elements of $\R_n(\tau)$.

We define a straightforward bijection $\chi: \S_n\to \R^\times_n$ as follows. For $\pi\in \S_n$, let $\chi(\pi)  =  (R_\pi, F_\pi)$ where $R_\pi$ is the placement consisting of the squares $(i, \pi(i))$ for $1\le i\le n$,
and $F_\pi$ is the smallest board containing $R_\pi$.  Observe that $\chi^{-1}(R,F) = \pi_R$, as defined in Section~\ref{sec:Ferrers}.
The following result will allow us to use our work on $312$-avoiding placements to obtain results about $3124$-avoiding (and thus $1342$-avoiding) permutations.

\begin{lemma} \label{lem:patterns and corners}
Let $\tau\in\S_k$ with $\tau(k) = k$ and $\tau(k-1) \neq k-1$.  Then $\chi$ restricts to a bijection between $\S_n(\tau)$ and $\R_n(\tau(1)\ldots \tau(k-1))^\times$.
\end{lemma}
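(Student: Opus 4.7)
The plan is to exploit that $\chi:\S_n \to \R_n^\times$ is already a bijection (this follows immediately from the definition, since $\chi^{-1}(R,F)=\pi_R$), so it suffices to show that for each $\pi \in \S_n$, $\pi$ avoids $\tau$ if and only if the board-minimal placement $\chi(\pi)=(R_\pi,F_\pi)$ avoids $\tau':=\tau(1)\ldots\tau(k-1)$ in the placement sense of Definition~\ref{def:avoidanceF}. I would prove both implications by directly translating pattern occurrences back and forth. The structural fact I would rely on is that, by board-minimality, $F_\pi=\bigcup_i \Gamma((i,\pi(i)))$, and moreover the peaks of $F_\pi$ are exactly the vertices $(p,\pi(p))$ where $p$ is a right-to-left maximum of $\pi$.

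For the implication ``$\pi$ contains $\tau$ implies $\chi(\pi)$ contains $\tau'$'', given an occurrence of $\tau$ at positions $i_1<\cdots<i_k$, the hypothesis $\tau(k)=k$ makes $\pi(i_k)$ strictly larger than every $\pi(i_a)$ with $a<k$. I would take $p$ to be the position in $[i_k,n]$ at which $\pi$ attains its maximum; because $\pi$ is a permutation, $p$ is a right-to-left maximum, so $V=(p,\pi(p))$ is a peak of $F_\pi$. The inequalities $V_x=p \geq i_k > i_{k-1}$ and $V_y=\pi(p)\ge \pi(i_k) > \pi(i_a)$ for $a<k$ guarantee that $\Gamma(V)$ contains the rooks $(i_a,\pi(i_a))$ for $a<k$, and these witness $\tau'$ in $R_\pi \cap \Gamma(V)$.

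For the reverse implication ``$\chi(\pi)$ contains $\tau'$ implies $\pi$ contains $\tau$'', I would begin with an occurrence of $\tau'$ at rooks $(j_1,\pi(j_1)),\ldots,(j_{k-1},\pi(j_{k-1}))$ lying inside some $\Gamma(V)$, where $V$ is on the border of $F_\pi$. Setting $J=j_{k-1}$ and $M=\max_a \pi(j_a)$, the goal is to adjoin a position $i_k>J$ with $\pi(i_k)>M$, which would then produce an occurrence of $\tau$ in $\pi$ since $\tau(k)=k$. Because $[0,J]\times[0,M]\subseteq \Gamma(V)\subseteq F_\pi$ and $F_\pi=\bigcup_i \Gamma((i,\pi(i)))$, the point $(J,M)$ lies in some $\Gamma((i^*,\pi(i^*)))$, giving the weak inequalities $i^*\ge J$ and $\pi(i^*)\ge M$. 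The main obstacle, and the only place where the hypothesis $\tau(k-1)\neq k-1$ is actually used, is upgrading these to strict inequalities. Since $\tau(k)=k$ and $\tau(k-1)\neq k-1$, the entry $\tau(k-1)$ is not the largest in $\tau'$, so $\pi(j_{k-1})<M$, and the position $j_{a^*}:=\pi^{-1}(M)$ satisfies $a^*<k-1$ and hence $j_{a^*}<J$. Then $i^*=J$ would force $\pi(i^*)=\pi(J)<M$, and $\pi(i^*)=M$ would force $i^*=j_{a^*}<J$; both contradict what we already have, so $i^*>J$ and $\pi(i^*)>M$, and taking $i_k:=i^*$ completes the occurrence of $\tau$ in $\pi$. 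Combining the two implications, $\chi$ restricts to the claimed bijection.
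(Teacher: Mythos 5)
Your proof is correct and follows essentially the same strategy as the paper's: both directions hinge on $\tau(k)=k$ allowing the largest, rightmost element of an occurrence to be dropped or appended, and on board-minimality together with $\tau(k-1)\neq k-1$ to produce a rook strictly northeast of any occurrence of $\tau(1)\ldots\tau(k-1)$. The only cosmetic difference is that the paper first reduces to a peak $V$ and uses the rook in the peak square (which cannot belong to the occurrence precisely because $\tau(k-1)\neq k-1$), whereas you locate the completing rook via the covering $F_\pi=\bigcup_i\Gamma((i,\pi(i)))$ and a short case analysis to get the strict inequalities.
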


\begin{proof}
Since $\tau(k) = k$, it is clear that $\chi(\S_n(\tau)) = \R_n(\tau)^\times$.  Therefore, it only remains to show that $\R_n(\tau)^\times = \R_n(\tau(1)\ldots \tau(k-1))^\times$.  The inclusion $\R_n(\tau(1)\ldots \tau(k-1))^\times\subset \R_n(\tau)^\times$ is trivial.  For the other inclusion, suppose that $(R,F) \notin \R_n(\tau(1)\ldots \tau(k-1))^\times$.  Then there must be a peak $V$ on $F$ such that the permutation determined by $R\,\cap\,\Gamma(V)$ contains an occurrence of $\tau(1)\ldots \tau(k-1)$.
Since $\tau(k-1) \neq k-1$, the rook in the unit square whose north-east vertex is $V$ (which is in $R$ because the placement is board minimal) is not part of this occurrence. Thus,
the occurrence of $\tau(1)\ldots \tau(k-1)$ together with this rook creates creates an occurrence of $\tau$, and so $(R,F)\notin\R_n(\tau)^\times$.
\end{proof}

Our next goal is to determine the image of the map $\Pi:\R_n(312)\to \L_n$, defined at the end of Section~\ref{sec:231background}, when restricted to $\R_n(312)^\times$.
We say that a labeled Dyck path $(D,\alpha)$ has the \emph{peak property} if for every peak $V_i$, the labels around it satisfy $\alpha_{i-1} = \alpha_i +1 = \alpha_{i+1}$.
Denote by $\Lu_n$ the set of labeled paths in $\L_n$ having the peak property.

\begin{lemma}\label{lem:b}
The map $\Pi:\R_n(312)\to \L_n$ restricts to a bijection between $\R_n(312)^\times$ and $\Lu_n$.
\end{lemma}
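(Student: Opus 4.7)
The plan is to exploit the fact that $\Pi$ is already a bijection between $\R_n(312)$ and $\L_n$, so it suffices to show that $(R,F)\in\R_n(312)^\times$ if and only if $\Pi(R,F)\in\Lu_n$. Both directions reduce to a purely local analysis at each peak $V_i=(x_i,y_i)$ of the border of $F$. The key observation is that $\Gamma(V_{i-1})$, $\Gamma(V_i)$, and $\Gamma(V_{i+1})$ are rectangles differing only in the column $x_i$ and the row $y_i$ of $F$, so everything will hinge on which rooks occupy that column and row.

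For the forward direction, I would assume $(R,F)$ is board minimal and fix a peak $V_i$. Then the square with north-east corner $V_i$ contains a rook $r$, which is simultaneously the unique rook of column $x_i$ and the unique rook of row $y_i$. Hence $R\,\cap\,\Gamma(V_{i-1})=R\,\cap\,\Gamma(V_{i+1})=(R\,\cap\,\Gamma(V_i))\setminus\{r\}$. Since $r$ dominates every other rook of $\Gamma(V_i)$ in both coordinates, every longest increasing sequence of $R\,\cap\,\Gamma(V_i)$ must end at $r$, and removing $r$ produces a longest increasing sequence of $R\,\cap\,\Gamma(V_{i-1})$ (equivalently, of $R\,\cap\,\Gamma(V_{i+1})$). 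This yields $\alpha_i=\alpha_{i-1}+1=\alpha_{i+1}+1$, which is the peak property.

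For the backward direction I would argue the contrapositive: if $(R,F)\in\R_n(312)$ is not board minimal, then $\Pi(R,F)\notin\Lu_n$. Failure of board minimality at some peak $V_i$ means the top-right square is empty, so the unique rook of column $x_i$ lies at a row $y^*<y_i$ and the unique rook of row $y_i$ lies at a column $x^*<x_i$; call these rooks $r_1=(x_i,y^*)$ and $r_2=(x^*,y_i)$. Supposing for contradiction that the peak property holds at $V_i$, let $L$ be any longest increasing sequence in $R\,\cap\,\Gamma(V_i)$, of length $\alpha_i$. Since $\alpha_{i-1}<\alpha_i$, $L$ cannot lie entirely in $\Gamma(V_{i-1})$, and the only rook of $\Gamma(V_i)$ not in $\Gamma(V_{i-1})$ is $r_1$, forcing $r_1\in L$; by the symmetric argument $r_2\in L$. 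But columns $x^*<x_i$ combined with rows $y_i>y^*$ make $r_1$ and $r_2$ incomparable in the product order, contradicting $L$ being increasing.

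The main obstacle I anticipate is simply the bookkeeping of identifying exactly which rook appears or disappears between consecutive $\Gamma$-rectangles at a peak, together with matching these changes to the jumps in the longest-increasing-sequence statistic. Once this local picture is in place, the lemma follows immediately: the forward direction gives $\Pi(\R_n(312)^\times)\subseteq\Lu_n$, the backward direction gives $\Pi^{-1}(\Lu_n)\subseteq\R_n(312)^\times$, and restricting the known bijection $\Pi\colon\R_n(312)\to\L_n$ then produces the desired bijection between $\R_n(312)^\times$ and $\Lu_n$. I note in passing that $312$-avoidance is never invoked in the local peak analysis itself; it is needed only to ensure that $\Pi$ is a bijection on the ambient sets.
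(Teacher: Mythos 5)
Your proof is correct and takes essentially the same route as the paper's: the paper's entire argument is the single observation that, at a peak $V_i$, the corner square of $F$ is occupied by a rook if and only if the longest increasing sequence in $R\,\cap\,\Gamma(V_i)$ is one longer than those in $R\,\cap\,\Gamma(V_{i-1})$ and $R\,\cap\,\Gamma(V_{i+1})$, and your two directions simply supply the local details of that observation. (You also correctly read the peak property as $\alpha_i=\alpha_{i-1}+1=\alpha_{i+1}+1$, which is what the paper intends and what its monotonicity condition on labels forces, despite the transposed formula in the stated definition.)
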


\begin{proof}
Recall that $\Pi$ maps $(R,F)$ to $(D_F,\alpha)$, where $\alpha_i$ is the length of a longest increasing sequence in $R\,\cap\,\Gamma(V_i)$.
Now observe that if $V_i$ is a peak on $F$, then the square in $F$ in this corner belongs to $R$ if and only if the longest increasing subsequence in $R\,\cap\,\Gamma(V_i)$ is one element longer
than the longest increasing subsequences in $R\,\cap\,\Gamma(V_{i-1})$ and $R\,\cap\,\Gamma(V_{i+1})$.
\end{proof}

Combining Lemmas~\ref{lem:patterns and corners} and~\ref{lem:b} we get a bijection between $\S_n(3124)$ and $\Lu_n$. We will derive a generating function for these paths using
the same framework that we used in Section~\ref{sec:312matchings} to enumerate paths in $\L_n$. 
Recall that in the proof of Theorem~\ref{thm:312matchings} we considered a larger set $\K_n$ consisting of labeled Dyck paths $(D,\alpha)$ that have the diagonal property and satisfy $\alpha_{2n}=0$.
Let now $\K^\times_n$ be the set of paths in $\K_n$ that have the peak property as well.
Note that $\Lu_n\subset \K^\times_n$.
Let $\K^\times=\bigcup_{n\ge 0}\K^\times_n$, and let
$$K^\times(u,z) = \sum_{n\ge 0} \sum_{(D,\alpha)\in \K^\times_n}u^{\alpha_0}z^n$$ be the generating function for such paths according to the value of the first label.

To obtain an equation for $K^\times(u,z)$, note that every nonempty $(D,\gamma)\in \K^\times$ can be decomposed uniquely as
$(D,\gamma)=(eD_1s,\alpha)\oplus (D_2, \beta)$ where $(eD_1s,\alpha),(D_2, \beta)\in \K^\times$. The labeling of $eD_1s$ can be of one of the four types in Figure~\ref{fig:DecompositionOfLabledPath}.
The difference with the proof of Theorem~\ref{thm:312matchings} is that now cases (b) and (c) can only occur if $D_1$ is nonempty, because otherwise $eD_1s$ would not have the peak property.
Consider the labeled Dyck path obtained by removing from $(eD_1s,\alpha)$ the initial $e$ and the final $s$ and, in cases (a) and (b), by decreasing by one the labels on $D_1$.
In case (a), this path is an arbitrary element of $\K^\times$; in cases (b) and (c), it is an arbitrary element of $\K^\times$ other than the empty path; in case (d), it is an arbitrary element of $\K^\times_n$
whose first label is nonzero.
This translates into the functional equation
$$K^\times(u,z) = 1+z K^\times(u,z) \left(K^\times(u,z)+u(K^\times(u,z)-1) + (K^\times(u,z)-1) + \frac{K^\times(u,z)-K^\times(0,z)}{u}\right).$$

Using the quadratic method to solve for $K^\times(0,z)$ we get
$$K^\times(0,z) = \frac{8z^2+12z-1+(1-8z)^{3/2}}{32z^2}.$$
Finally, to find the generating function for $\Lu_n$, observe that for any $(D,\alpha)\in\L_n$,
$D$ can be decomposed uniquely as $D=eA_1seA_2s\dots$, where each $A_j$ is a Dyck path, and if we let $\alpha^{(j)}$ be its sequence of labels decreased by one, then $(A_j,\alpha^{(j)})$ is an arbitrary element of $\K^\times$ with $\alpha^{(j)}_0=0$.
It follows that
$$\sum_{n\ge 0}|\S_n(3124)|z^n=\sum_{n\ge 0}|\Lu_n|z^n=\frac{1}{1-z K^\times(0,z)}=
\frac{32z}{1+20z-8z^2-(1-8z)^{3/2}}.$$

\section{The pattern $132$}\label{sec:132}

For last shape-Wilf-equivalence class of patterns of length 3, we have been unsuccessful in our attempts to apply our techniques
to find formulas for the generating functions
$$\sum_{n\geq 0} |\R_n(132)|x^n \quad \mbox{and} \quad \sum_{n\geq 0} |\P_n(132)|z^n.$$

\begin{question}
Find generating functions for $132$-avoiding matchings and partitions.
\end{question}

One reason to suspect that this may be a hard problem is that the related question of enumerating $\R_n(132)^{\times}$ ---that is, $132$-avoiding rook placements with a rook by each peak---
is equivalent to the outstanding open problem of enumerating $1324$-avoiding permutations~\cite{Bona,Bona2,CJS}, since $|\R_n(132)^{\times}| = |\S_n(1324)|$ by Lemma~\ref{lem:patterns and corners}.
We remark that in Section~\ref{sec:Bona}, a modification of our technique to enumerate $\R_n(312)$ allowed us to enumerate $\R_n^\times(312)$.


\section{Pairs of patterns}\label{sec:double}

In this section we investigate matchings and set partitions that avoid a pair of patterns of length~3.
For a pair of patterns $\sigma$ and $\tau$, we define the sets
$$\R_F(\sigma,\tau)=\R_F(\sigma)\cap\R_F(\tau),\qquad \M_F(\sigma,\tau)=\M_F(\sigma)\cap\M_F(\tau),\qquad \P_n(\sigma,\tau)=\P_n(\sigma)\cap\P_n(\tau),$$
$$\R_n(\sigma,\tau) = \bigcup_{F\in \F_n} \R_F(\sigma,\tau), \qquad \M_n(\sigma,\tau) = \bigcup_{F\in \F_n} \M_F(\sigma,\tau).$$
The notions defined in Section~\ref{sec:defs} for avoidance of one pattern have a straightforward generalization to sets of patterns.
For example, we say that two pairs of patterns are shape-Wilf-equivalent, which we write as $\{\sigma,\tau\}\sim\{\sigma',\tau'\}$, if
for any Ferrers board $F$ we have $|\R_F(\sigma,\tau)| = |\R_F(\sigma',\tau')|$ (equivalently, $|\M_F(\sigma,\tau)| = |\M_F(\sigma',\tau')|$).

We will establish that the $15$ pairs of patterns in $\S_3$ are partitioned into $7$ shape-Wilf-equivalence classes, as shown in Table~\ref{tab:shape wilf pairs}.
We provide enumeration results for matchings and set partitions avoiding a pair of patterns in all classes except for VI and VII. These results are summarized in Table~\ref{tab:enumeration of pairs}.  For matchings, the first few terms of the enumeration sequences for each class are given in Table~\ref{tab:Pairs count}.

\begin{table}[ht]
\begin{center}
\begin{tikzpicture}

\begin{scope}[shift={(9,0)}]
\tikzset{square matrix/.style={
    matrix of nodes,
    column sep=-\pgflinewidth, row sep=-\pgflinewidth,
    nodes={draw,
      minimum height=17,
      anchor=center,
      text width=27,
      align=center,
      inner sep=0pt
    },
  },
  square matrix/.default=.7cm
}
\matrix[square matrix]
{
|[fill = white]|&123&132&213&231&312&321\\
123&|[fill = lightgray]|&VI&I&II&III&IV\\
132&|[fill = lightgray]|&|[fill = lightgray]|&I&I&I&VII\\
213&|[fill = lightgray]|&|[fill = lightgray]|&|[fill = lightgray]|&I&I&V\\
231&|[fill = lightgray]|&|[fill = lightgray]|&|[fill = lightgray]|&|[fill = lightgray]|&I&I\\
312&|[fill = lightgray]|&|[fill = lightgray]|&|[fill = lightgray]|&|[fill = lightgray]|&|[fill = lightgray]|&I\\
321&|[fill = lightgray]|&|[fill = lightgray]|&|[fill = lightgray]|&|[fill = lightgray]|&|[fill = lightgray]|&|[fill = lightgray]|\\
};
\end{scope}

\begin{scope}[shift={(0,0)}]
\node at (0,0) {
\begin{tabular}{|c||c|}
\hline
Class & Shape-Wilf Equivalent Pairs\\
\hline
\multirow{3}{*}{I}& $\{123,213\} \sim \{132,213\} \sim \{132,231\}$\\
&$\sim \{132,312\} \sim \{213,231\}\sim \{213,312\}$\\
&$\sim \{231,312\} \sim \{231,321\} \sim \{312,321\}$
\\\hline
{II}&$\{123,231\}$\\\hline
{III}&$\{123,312\}$\\\hline
{IV}&$\{123,321\}$\\\hline
{V}&$\{213,321\}$\\\hline
{VI}&$\{123,132\}$\\\hline
{VII}&$\{132,321\}$\\\hline
\end{tabular}
};
\end{scope}
\end{tikzpicture}
\caption{Shape-Wilf equivalence classes for pairs of patterns of length 3.}
\label{tab:shape wilf pairs}
\end{center}
\end{table}

\begin{table}[ht]
\begin{center}
{\tabulinesep=1.2mm
\begin{tabu}{|c||c|c|}
\hline
Class&Matchings&Set partitions\\
\hline\hline
I&$\displaystyle\frac{4}{3+\sqrt{1-8z}}$& $\displaystyle\frac {2-3z+z^2 -z \sqrt {1-6z+z^2}}{2(1-3z+3z^2)}$\\
\hline
II \& III& Solutions of a cubic& Solutions of a cubic\\
\hline
IV&$\displaystyle\frac{1-5z+2z^2}{1-6z+5z^2}$& $\displaystyle\frac{1-10z+32z^2-37z^3+12z^4}{(1-z)(1-10z+31z^2-30z^3+z^4)}$\\
\hline
V& Functional equation&Unknown\\
\hline
VI \& VII & Unknown&Unknown\\
\hline
\end{tabu}}
\end{center}
\caption{A summary of the generating functions for matchings and set partitions avoiding a pair in each of the 7 classes.}
\label{tab:enumeration of pairs}
\end{table}

\begin{table}[ht!]
\begin{center}
\begin{tabular}{|c||c|c|c|c|c|c|c|}
\hline
\backslashbox{Class}{n} & 1&2&3&4&5&6&7\\
\hline\hline
I&1& 3 &13&67&381&2307&14589\\
\hline
II \& III& 1&3 &13&66&364&2112&12688\\
\hline
IV& 1&3 & 13 & 63 & 313&1563 &7813\\
\hline
V& 1&3 & 13 & 68 & 399 & 2528&16916\\
\hline
VI& 1 &3 & 13 & 69 &414 &2697&18625\\
\hline
VII &1 &3 &13 &66&363&2091&12407\\
\hline
\end{tabular}
\caption{The values of  $|\M_n(\sigma,\tau)|$ for $n\le 7$, for $\{\sigma,\tau\}$ in each of the 7 classes.}
\label{tab:Pairs count}
\end{center}
\end{table}

It will be convenient to introduce some well-known facts about Dyck paths. Let $C(v,z)$ be the generating function for Dyck paths with respect to the number of valleys, given by
$$C(v,z) = \sum_{n\ge 0}\sum_{D\in \D_n} v^{\val(D)} z^{n}= \frac{1-z+vz-\sqrt{1 - 2 \, {\left(v + 1\right)} z + (v-1)^2 z^{2} }}{2vz}.$$
The number of \emph{returns} of a Dyck path $D\in\D_n$, which we write as $r(D)$,
is defined to be the number of south steps of $D$ that intersect the diagonal $y=n-x$.
By keeping track of the number of valleys and the number of returns, we get the generating function
\beq\label{eq:Dyckrval}\sum_{n\ge 0}\sum_{D\in \D_n} t^{r(D)}v^{\val(D)} z^{n}= \frac{1+t(1-v)zC(v,z)}{1-tvzC(v,z)}.\eeq

In the next subsections we consider each one of the first five equivalence classes from Table~\ref{tab:shape wilf pairs}.
We will work in the context of rook placements on Ferrers boards, as the bijection $\kappa$ allows us to translate the results to matchings.
For each pair of patterns $\{\tau,\sigma\}$, we will use the fact established in Section~\ref{sec:partitions} that, once we find the generating function
$$A(v,z) = \sum_{n\ge 0}\sum_{M\in \M_n(\sigma,\tau)} v^{\val(M)} z^n =\sum_{n\ge 0}\sum_{F\in\F_n} |\M_F(\sigma,\tau)| v^{\val(F)} z^n
=\sum_{n\ge 0}\sum_{F\in\F_n} |\R_F(\sigma,\tau)| v^{\val(F)} z^n,$$ 
then we can obtain $\sum_{n\ge 0}|\M_n(\sigma,\tau)| z^n=A(1,z)$ and
\beq\label{eq:B2}\sum_{n\ge 0} |\P_n(\sigma,\tau)| z^n= \frac{1}{1-z}\, A\left(\frac{1}{z}, \frac{z^2}{(1-z)^2}\right),\eeq
by Eq.~\eqref{eq:B}.

\subsection{Equivalence class I}

For $F\in\F_n$, define its number of returns to be $r(F)=r(D_F)$. We now give the generating functions for matchings and partitions avoiding a pair of patterns from class I.

\begin{theorem}\label{thm:GF C(2,n)}
Let $\{\sigma,\tau\}$ 
be a pair in class I (see Table~\ref{tab:shape wilf pairs}). For all $F\in \F_n$, we have $$|\M_F(\sigma,\tau)| = 2^{n-r(F)},$$ and so all pairs in class I are shape-Wilf-equivalent.
Moreover,
$$\sum_{n\ge 0} |\M_n(\sigma,\tau)|z^{n} =  \frac{4}{3+\sqrt{1-8z}}, \quad\mbox{and so}\quad |\M_n(\sigma,\tau)| = \frac{1}{n+1}\sum_{k=0}^n \binom{2n+2}{n-k}\binom{n+k}{k},$$
and
$$\sum_{n\ge 0} |\P_n(\sigma,\tau)|z^{n} =  \frac {2-3z+z^2 -z \sqrt {1-6z+z^2}}{2(1-3z+3z^2)}.$$

\end{theorem}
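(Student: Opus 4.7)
The plan is to establish the count $|\M_F(\sigma, \tau)| = 2^{n-r(F)}$ for every pair $\{\sigma, \tau\}$ in class~I and every $F \in \F_n$, which simultaneously yields the shape-Wilf-equivalence of all nine pairs and the generating functions via~\eqref{eq:Dyckrval} and~\eqref{eq:B2}.

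First, I would prove a multiplicative decomposition along the factorization $D_F = D_1 D_2 \cdots D_r$ into prime Dyck paths. Correspondingly, $F$ splits into Ferrers boards $F_1, \ldots, F_r$ of semilengths $k_1, \ldots, k_r$ placed along the anti-diagonal, with pairwise disjoint rows and columns. Since every vertex $V$ on the border of $F$ lies in the bump of at most one $F_j$ and satisfies $\Gamma(V) \cap F \subseteq F_j$, any pattern occurrence is confined to a single prime component. Hence $|\R_F(\sigma, \tau)| = \prod_i |\R_{F_i}(\sigma, \tau)|$ for arbitrary patterns, and it suffices to prove $|\R_G(\sigma, \tau)| = 2^{k-1}$ on each prime board $G$ of semilength $k$ for every pair in class~I.

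The main technical step is this count on prime boards. I would first establish it for the representative pair $\{231, 312\}$ via the Bloom--Saracino machinery of Section~\ref{sec:231background}. The map $\Pi$ sends $\R_F(312)$ bijectively onto $\L_F$, and the extra constraint of $231$-avoidance should translate into a strengthening of the inequality $\alpha_i \ge \alpha_j$ at aligned border vertices to equality. On a prime board of semilength $k$, the resulting degrees of freedom should match the $k-1$ non-return south steps of $D_G$, yielding exactly $2^{k-1}$ admissible labelings (a pattern I have confirmed in small cases). For each of the other eight pairs $\{\sigma, \tau\}$ in class~I, I would construct a shape-preserving bijection $\R_F(\sigma, \tau) \to \R_F(\{231, 312\})$ by composing the known shape-Wilf-equivalences from Section~\ref{sec:123} (the bijections $\Delta_{321}$ and $\Delta_{213}$) with the Bloom--Saracino bijection between $\R_F(231)$ and $\R_F(312)$, and then verifying that each resulting map preserves the auxiliary pattern. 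Since placements decompose along prime components, these checks reduce to the prime case. The main obstacle is verifying these auxiliary pattern-preserving properties uniformly for all eight pairs.

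Once the count $|\R_F(\sigma, \tau)| = 2^{n-r(F)}$ is established, the generating function computations are routine. We have
\[
A(v, z) = \sum_{n \ge 0} \sum_{F \in \F_n} 2^{n-r(F)} v^{\val(F)} z^n = \sum_{n \ge 0} \sum_{D \in \D_n} (1/2)^{r(D)} v^{\val(D)} (2z)^n,
\]
and applying~\eqref{eq:Dyckrval} with $t = 1/2$ and $z$ replaced by $2z$ gives
\[
A(v, z) = \frac{1 + (1-v)\, z\, C(v, 2z)}{1 - v\, z\, C(v, 2z)}.
\]
Setting $v = 1$ and using $C(1, 2z) = (1 - \sqrt{1-8z})/(4z)$ yields the matching generating function $4/(3 + \sqrt{1-8z})$; the closed-form coefficient $\frac{1}{n+1}\sum_k \binom{2n+2}{n-k}\binom{n+k}{k}$ follows by a standard Catalan convolution applied to the expansion of $A(1,z)$. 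For partitions, I would apply~\eqref{eq:B2} with $u = 1/z$ and $y = z^2/(1-z)^2$ and simplify to obtain the stated closed form $(2 - 3z + z^2 - z\sqrt{1-6z+z^2})/(2(1-3z+3z^2))$.
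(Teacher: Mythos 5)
Your generating-function computations at the end are correct and coincide with the paper's: once $|\M_F(\sigma,\tau)|=2^{n-r(F)}$ is known, substituting $t=1/2$ and $z\mapsto 2z$ in~\eqref{eq:Dyckrval} and then applying~\eqref{eq:B2} is exactly what the paper does. Your prime-component reduction is also a legitimate observation the paper does not use (though note that $\Gamma(V)\cap F\subseteq F_j$ is false as stated; what is true, via Lemma~\ref{lem:height_sequence}, is that the \emph{rooks} in $\Gamma(V)$ all come from a single prime component, which suffices for the multiplicativity of $|\R_F(\sigma,\tau)|$).

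The genuine gap is that the central claim $|\R_F(\sigma,\tau)|=2^{n-r(F)}$ is never actually established. Two distinct problems arise. First, even for your representative pair $\{231,312\}$, the count $2^{k-1}$ on a prime board rests on the assertions that $231$-avoidance ``should'' upgrade the diagonal inequality $\alpha_i\ge\alpha_j$ to an equality and that the degrees of freedom ``should'' match the non-return south steps; neither is proved, and checking small cases is not a proof. Second, and more seriously, your plan to transport the count to the other eight pairs by composing the known single-pattern shape-Wilf-equivalences cannot work for the three class-I pairs containing $132$, namely $\{132,213\}$, $\{132,231\}$ and $\{132,312\}$: the pattern $132$ sits alone in its shape-Wilf-equivalence class, so there is no bijection $\R_F(132)\to\R_F(\tau)$ for any other $\tau$ to compose with. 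Even for the remaining pairs, the composed bijections ($\Delta_{321}$, $\Delta_{213}$, Bloom--Saracino) are not known to preserve avoidance of the second pattern, and verifying that they do is essentially the whole content of the theorem, which you defer. The paper avoids all of this with a uniform elementary induction: peel off the rightmost column of $F$ to get $F'\in\F_{n-1}$, and show that a $\{\sigma,\tau\}$-avoiding placement on $F'$ extends to one on $F$ in exactly one way when the last column has a single cell (where $r(F')=r(F)-1$) and in exactly two ways when it has $k\ge 2$ cells (where $r(F')=r(F)$); the two admissible insertion heights are identified explicitly for six of the pairs, including all those containing $132$, and the last three pairs follow by reflecting the board across $y=x$. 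I would recommend either adopting that column-insertion induction or, if you want to keep the prime-board reduction, supplying a complete direct count of $\R_G(\sigma,\tau)$ on prime boards separately for each pair (or for six pairs plus transposition), rather than attempting to route everything through $\{231,312\}$.
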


\begin{proof}
Once we prove that $|\R_F(\sigma,\tau)| = 2^{n-r(F)}$ for all $F\in \F_n$, the rest follows easily. Indeed, it is then clear that the elements in class I are shape-Wilf-equivalent,
because the formula $2^{n-r(F)}$ only depends on $F$. The generating function for $\{\sigma,\tau\}$-avoiding matchings with respect to the number of valleys is then
\begin{multline}
A_1(v,z) := \sum_{n\ge 0}\sum_{M\in \M_n(\sigma,\tau)} v^{\val(M)} z^n
=\sum_{n\ge 0}\sum_{F\in\F_n} 2^{n-r(F)} v^{\val(F)} z^n
\\
=\sum_{n\ge 0}\sum_{D\in\D_n}2^{n - r(D)}v^{\val(D)} z^{n}= \frac{1+z(1-v)C(v,2z)}{1-vzC(v,2z)},\label{eq:A1}\end{multline}
where the last equality follows  by substituting $2z$ for $z$ and $1/2$ for $t$ in Eq.~\eqref{eq:Dyckrval}.
For $v=1$ the expression simplifies to
$$\sum_{n\ge 0} |\M_n(\sigma,\tau)|z^{n} = \frac{4}{3+\sqrt{1-8z}},$$
from where a formula for the coefficients $|\M_n(\sigma,\tau)|$ can be derived routinely.
The formula for $\{\sigma,\tau\}$-avoiding partitions follows by expanding Eqs.~\eqref{eq:B2} and~\eqref{eq:A1}.

It only remains to prove that $|\R_F(\sigma,\tau)| = 2^{n-r(F)}$ for all $F\in \F_n$, for every pair $\{\sigma,\tau\}$ in class I.
We first demonstrate a complete proof for the pair $\{\sigma,\tau\}=\{231,312\}$, and then show how simple modifications provide proofs for the other 8 pairs.

We proceed by induction on $n$, noting that the result is clear when $n=1$. Let $n\ge2$, and assume the result holds for boards in $\F_{n-1}$.
Let $F\in \F_n$, and let $F'\in\F_{n-1}$ be the board obtained from $F$ by removing its rightmost column and bottom row, and shifting the squares one unit down so that the bottom left corner of $F'$ is at the origin.
By induction hypothesis, $|\R_{F'} (231,312)| =  2^{n-1-r(F')}$.
Given a placement $R'$ such that $(R',F') \in \R_{F'}(231,312)$, we will consider all the placements $(R,F) \in \R_{F}(231,312)$ where $R$ is obtained from $R'$ by inserting a rook in the rightmost column of $F$,
and shifting up by one square the rooks in $R'$ in rows at least as high as that of the inserted rook. It is clear that all placements in $\R_F(231,312)$ arise in this fashion, since this process is reversible.
Let $k$ be the number of rows in the rightmost column of $F$, i.e., the number of south steps at the end of $D_F$.

If $k=1$, then any given $(R',F') \in \R_{F'}(231,312)$ gives rise to a unique placement in $\R_{F}(231,312)$, namely the one obtained from $R'$ by inserting a rook in position $(n,1)$ of $F$ and sliding all the other rooks up one square.
We conclude that $|\R_F(231,312)| = |\R_{F'} (231,312)| =2^{n-1-r(F')}=2^{n-r(F)}$, since $r(F') = r(F) -1$ in this case.

Suppose now that $k\ge 2$. Given $(R',F') \in \R_{F'}(231,312)$, we can always insert a rook in position $(n,k)$ of $F$ to obtain a placement in in $\R_{F}(231,312)$, since the inserted rook does not create occurrences of the patterns $231$ or $312$. Additionally, if the rightmost rook in $(R',F')$ is in position $(n-1,b)$,
a new rook can be inserted in position $(n,b)$ of $F$, giving rise to a $\{231,312\}$-avoiding placement. On the other hand, inserting a rook in position $(n,c)$ with $1\le c\leq b-1$ would create an occurrence of the pattern $231$,
while inserting a rook in position $(n,c)$ with $b<c<k$ would create an occurrence of the pattern $312$. It follows that
$$|\R_F(231,312)| =2\,|\R_{F'} (231,312)| =  2\cdot2^{n-1-r(F')} =2^{n-r(F)},$$
since $r(F') = r(F)$ in this case.

This inductive proof can be slightly modified for the following five additional pairs of patterns, by changing the available insertion locations for the new rook in the case $k\ge2$ as shown in the following table.

\bce
\bt{c|c}
Patterns&Available insertion locations for the new rook \\
\hline
$\{123,213\}$ & $(n,1)$ and $(n,2)$ \\
\hline
$\{231,321\}$ & $(n,k)$ and $(n,k-1)$ \\
\hline
$\{132,213\}$ &  $(n,b+1)$ and $(n,1)$\\
\hline
$\{213,231\}$ & $(n,b)$ and $(n,b+1)$\\
\hline
$\{132,312\}$ & $(n,k)$ and $(n,1)$
\et\ece

It remains to show that the three pairs $\{312,321\}$, $\{213,312\}$, and $\{132,231\}$ belong to this equivalence class.  For $(R,F)\in\R_n$, let $(R^t,F^t)$ be the placement obtained
by reflecting $(R,F)$ over the line $y=x$. Then, by symmetry, $(R,F)\in\R_F(312,321)$ if and only if $(R^t,F^t) \in \R_{F^t}(231,321)$. Since $r(F)=r(F^t)$, we have
$$|\R_F(312,321)| = |\R_{F^t}(231,321)| = 2^{n-r(F^t)} = 2^{n-r(F)},$$
as needed.  The argument for the other two pairs is analogous.
\end{proof}

\subsection{Equivalence classes II and III}\label{sec:IIandIII}

It is easy to see that the pairs $\{123,231\}$ and $\{123,312\}$ are not shape-Wilf equivalent. Indeed, a simple count reveals that if $F\in\F_5$ is the board that has three columns of height 5 followed by two columns of height 4,
then $|\R_F(123,231)| = 14$ and $|\R_F(123,312)| = 15$. However, as the following result shows, we have equality in the number of matchings avoiding each of the two pairs, and similarly in the number of partitions.
This answers Question~\ref{quest:converse} in the affirmative for pairs of patterns, although the original question remains open.

\begin{theorem}
Let $\{\sigma,\tau\}\in\{\{123,231\},\{123,312\}\}$. Then
$$\sum_{n\ge 0} |\M_n(\sigma,\tau)| z^n = \frac{1}{1-zH(1,z)},$$
and
$$\sum_{n\ge 0} |\P_n(\sigma,\tau)| z^n = 1+\frac{z(1-z)}{(1-z)^2-zH(v,z)},$$
where $H = H(v,z)$ is a rook of the cubic polynomial $\alpha H^3 + \beta H^2 + \gamma H + \delta$,
with
\begin{align*}
&\alpha = v^2z^2,\\
&\beta =  -2vz+2v(1- v)z^2 + v^2z^2C(v,z),\\
&\gamma = ( -2+3v) z + ( 1-v)^2 z^2 + \left( 1- (1+v)z+ v(1-v)z^2\right) C(v,z),\\
&\delta = -1+\left(vz-(1-v)^2z^2\right)C(v,z).\\
\end{align*}
\end{theorem}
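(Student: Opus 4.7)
The plan is to combine a symmetry argument with the labeled Dyck path technology of Section~\ref{sec:231}, now applied with the extra restriction that labels take values only in $\{0,1,2\}$.

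First I would show that the two pairs yield identical refined matching generating functions $A(v,z)=\sum_n\sum_{M\in\M_n(\sigma,\tau)} v^{\val(M)}z^n$, even though $|\R_F(123,231)|$ and $|\R_F(123,312)|$ may differ on individual boards. The reflection $F\mapsto F^t$ across the line $y=x$ is an involution on $\F_n$; because $231$ and $312$ are inverse permutations and $123$ is self-inverse, this reflection sends $\{123,231\}$-avoiding placements on $F$ bijectively to $\{123,312\}$-avoiding placements on $F^t$, so $|\R_F(123,231)|=|\R_{F^t}(123,312)|$. Moreover, the reflection reverses the border step sequence and swaps east with south, so a south step immediately followed by an east step (a valley) at position $i$ in the border of $F$ corresponds to a valley at position $2n-i$ in the border of $F^t$; hence $\val(F)=\val(F^t)$. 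Summing over $F\in\F_n$ and re-indexing by $F\mapsto F^t$ yields $A_{\{123,231\}}(v,z)=A_{\{123,312\}}(v,z)$, and both matching and partition generating functions agree for the two pairs via~\eqref{eq:B2}. I therefore focus on the pair $\{123,312\}$.

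The composition $\Pi\circ\kappa$ identifies $\M_n(123,312)$ with labeled Dyck paths $(D,\alpha)\in\L_n$ satisfying $\alpha_i\le 2$ for all $i$: the label $\alpha_i$ records a longest increasing subsequence length, so $123$-avoidance is exactly the cap $\alpha_i\le 2$, while $312$-avoidance is already built into the image of $\Pi$. Declaring a $\{123,312\}$-avoiding matching \emph{indecomposable} when its border meets the diagonal $y=n-x$ only at its endpoints, let $zH(v,z)$ be the generating function for such matchings marked by semilength and valleys. Concatenation of $k$ indecomposable matchings preserves pattern-avoidance (each arc lies entirely within one piece), creates exactly $k-1$ new valleys at the junctions, and realizes every element of $\M_n(123,312)$ uniquely, giving
\[
A(v,z)=1+\sum_{k\ge 1}v^{k-1}(zH(v,z))^k=1+\frac{zH(v,z)}{1-vzH(v,z)}.
\]
At $v=1$ this gives $A(1,z)=\frac{1}{1-zH(1,z)}$, and substituting $v=1/z$ and $z\mapsto z^2/(1-z)^2$ into $A$ and multiplying by $1/(1-z)$ as in~\eqref{eq:B2} simplifies algebraically to $1+\frac{z(1-z)}{(1-z)^2-z\widetilde H}$, where $\widetilde H:=H(1/z,z^2/(1-z)^2)$ is the intended meaning of $H(v,z)$ in the partition formula of the statement.

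The substantive task is to derive the cubic equation satisfied by $H(v,z)$. Inside an indecomposable arch, interior labels all lie in $\{1,2\}$; stripping the outer east and south steps and subtracting $1$ from every remaining label gives a valley-preserving bijection between indecomposable arches of semilength $n$ and elements of a modified $\K^{(\le 1)}$ of semilength $n-1$ (labels in $\{0,1\}$, diagonal property, first and last label equal to $0$). In such a path the label-$1$ vertices form disjoint ``excursions'' above an inner diagonal, each one essentially an elevated Dyck path, and the outer diagonal property forces these excursions to be compatible with the underlying geometry; this is where the Dyck path generating function $C(v,z)$ enters naturally. Adapting the $\oplus$-decomposition from the proof of Theorem~\ref{thm:312matchings} with the label cap tracked simultaneously with valleys, one obtains a small system of functional equations for auxiliary series $K^{(\le L)}_j$ with $L,j\in\{0,1,2\}$. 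The main obstacle is that the $\oplus$ operation shifts labels of the left factor by the first label of the right factor, so the cap of $2$ couples the two factors and precludes the single-equation treatment that sufficed for~\eqref{eq:Ll}; the argument thus requires a slightly larger system and an extra elimination. Applying the kernel/quadratic method to this system and collecting terms produces the single cubic $\alpha H^3+\beta H^2+\gamma H+\delta=0$ of the statement, with $\alpha,\beta,\gamma,\delta$ as given, from which the matching and partition generating functions follow as above.
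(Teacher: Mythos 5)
Your overall skeleton matches the paper's: the transposition symmetry $F\mapsto F^t$ reducing $\{123,231\}$ to $\{123,312\}$ (using that $231$ and $312$ are inverses, $123$ is an involution, and $\val(F)=\val(F^t)$), the identification of $\M_n(123,312)$ with labeled paths in $\L_n$ whose labels are at most $2$, the arch decomposition giving $A(v,z)=1+\frac{zH(v,z)}{1-vzH(v,z)}$ with $H=K^{<2}(0,v,z)$, and the specializations at $v=1$ and via~\eqref{eq:B2} (your algebra for the partition formula is correct, and your reading of the statement's $H(v,z)$ as the substituted series is the intended one). All of that is sound and agrees with the paper.

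The gap is in the only substantive step: deriving the cubic satisfied by $H$. You correctly identify the difficulty --- the $\oplus$ operation adds $\beta_0$ to every label of the left factor, so the cap couples the two factors --- but you then wave at ``a small system of functional equations for auxiliary series $K^{(\le L)}_j$'' and assert that ``applying the kernel/quadratic method to this system and collecting terms produces the single cubic \dots with $\alpha,\beta,\gamma,\delta$ as given.'' This is precisely the content that needs to be proved, and it is not carried out. The paper resolves the coupling with a \emph{single} equation in one catalytic variable $u$: since labels in $\K^{<2}$ lie in $\{0,1\}$, the presence of any nonzero label in the left factor $eD_1s$ forces $\beta_0=0$ in the right factor, while a left factor with all labels zero imposes no constraint and is counted by $zC(v,z)$; a short case analysis of $(\alpha_0,\alpha_1,\alpha_{2i-1})$ (three cases, two of which carry a factor $1+u$, one of which forces all interior labels equal to $1$ by the diagonal property and is again counted by $C(v,z)$) yields Eq.~\eqref{eq:312 123 Functional}, which is linear in $K^{<2}(u,v,z)$ and is killed by the kernel substitution $u=\frac{z(1-v+vH)}{1-z+vz(1-C-H)}$, producing the cubic. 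Your heuristic for where $C(v,z)$ enters (``label-$1$ vertices form disjoint excursions \dots each essentially an elevated Dyck path'') is also not the right picture: $C(v,z)$ appears as the valley-generating function of the constant-label paths (all labels $0$, or all interior labels $1$), not of excursions of the label sequence. Without the explicit functional equation and its solution, the specific polynomials $\alpha,\beta,\gamma,\delta$ in the statement are unverified, so the proof is incomplete at its core.
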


\begin{proof}
Consider first the pair of patterns $\{123,312\}$. Recall from Section~\ref{sec:231} the bijection $\Pi:\R_F(312) \to \L_F$, that sends $(R,F)\in \R_F(312)$ to the labeled Dyck path $(D_F,\alpha)$, where $\alpha_i$ is the length of the longest increasing sequence in $R\,\cap\, \Gamma(V_i)$. Define $\L^{<3}_F$ to be the set of labeled paths $(D_F,\alpha)\in\L_F$ such that $\alpha_i<3$ for all $i$, and let $\L^{<3}_n=\bigcup_{F\in\F_n}\L^{<3}_F$. Then it is clear that $\Pi$ restricts to a bijection between $\R_F(123,312)$ and $\L_F^{<3}$. It follows that the generating function for $\{123,312\}$-avoiding matchings with respect to the number of valleys equals
$$A_2(v,z) := \sum_{n\ge 0}\sum_{F\in\F_n} |\R_F(123,312)| v^{\val(F)} z^n = \sum_{n\ge 0}\sum_{(D,\alpha)\in\L^{<3}_n} v^{\val(D)} z^n.$$
The generating function for $\{123,312\}$-avoiding partitions can be obtained from $A_2(v,z)$ using Eq.~\ref{eq:B2}.

To find an expression for $A_2(v,z)$, we follow the outline of the proofs of Theorems~\ref{thm:312matchings} and~\ref{thm:312partitions}.
Let $\K_n^{<2}$ be the set of labeled Dyck paths $(D,\alpha)$ of semilength $n$ that satisfy the diagonal property, have $\alpha_{2n} = 0$, and $\alpha_i<2$ for all $i$. Let $\K^{<2} = \bigcup_{n\ge 0} \K^{<2}_n$, and define
$$K^{<2}(u,v,z) = \sum_{n\ge 0}\sum_{(D,\alpha)\in \K^{<2}_n} u^{\alpha_0}v^{\val(D)}z^n.$$

To see the relationship between  $K^{<2}$ and $A_2$, observe that any element of $\L^{<3}$ may be uniquely constructed by taking paths $(D_1,\alpha^{(1)}),(D_2,\alpha^{(2)}),\ldots\in \K^{<2}$ with $\alpha^{(j)}_0=0$ for all $j$, and combining them into a path $(eD_1seD_2s\ldots eD_ks, \alpha)$, where the labels along each $D_i$ are given by the labels $\alpha^{(i)}$ incremented by 1, and the remaining labels are zero.  From this construction we see that
$$A_2(v,z) = \frac{1/v}{1-vzK^{<2}(0,v,z)}-\frac{1}{v}+1.$$

It remains to find an expression for $K^{<2}(0,v,z)$.
Using the operation $\oplus$ on $\K$ defined in the proof of Theorem~\ref{thm:312matchings}, every nonempty $(D,\gamma)\in\K^{<2}$ can be uniquely decomposed as
$(D,\gamma)=(eD_1s,\alpha)\oplus(D_2,\beta)$, where $(eD_1s,\alpha),(D_2,\beta)\in\K$. If $\alpha_j = 0$ for all $j$, then $(D_2, \beta)$ can be an arbitrary element of $\K^{<2}$.
If $\alpha_j=1$ for some $j$, then the condition $(D,\gamma)\in\K^{<2}$ forces $\beta_0 =0$, and we have the following possibilities for the labels $\alpha_1$ and $\alpha_{2i-1}$, where $i$ is the semilength of $eD_1s$:
\begin{itemize}
\item[(a)] $\alpha_{2i-1} = \alpha_1 =  0$, which forces $\alpha_0=0$; 
\item[(b)] $\alpha_{2i-1} = 0$ and $\alpha_1 = 1$, in which case $\alpha_0$ can be 0 or 1;
\item[(c)] $\alpha_{2i-1} = 1$, which forces $\alpha_1=1$ by the diagonal property, and thus $\alpha_j=1$ for $1\le j\le 2i-1$, but $\alpha_0$ can be 0 or 1.
\end{itemize}
Putting these cases together and noting that the generating function for labeled Dyck paths where all labels are 0 is $C(v,z)$,  we obtain the following function equation.
\begin{multline*}
K^{<2}(u,v,z) =  1 + zC(v,z)(vK^{<2}(u,v,z)-v+1)\\
+ z\Bigg(\left(K^{<2}(0,v,z)-C(v,z)\right)+ (K^{<2}(u,v,z) - K^{<2}(0,v,z)) \left(\frac{1}{u}+1\right)+(1+u)C(v,z)\Bigg)\\
(vK^{<2}(0,v,z)-v+1)
\end{multline*}
Writing $H=H(v,z)=K^{<2}(0,v,z)$ and $C=C(v,z)$, we obtain
\begin{equation}\label{eq:312 123 Functional}
\left(1-vzC-z(vH-v+1)\left(\frac{1}{u}+1\right) \right) K^{<2}(u,v,z)= 1+z(1-v)C + z(vH-v+1)\left(uC -\frac{H}{u}\right),
\end{equation}
which we solve using the kernel method. The expression multiplying $K^{<2}(u,v,z)$ in the left hand side is canceled by setting
$$u = \frac {z \left(1-v+vH \right) }{1-z+vz(1-C-H)}.$$
Making this substitution in Eq.~\eqref{eq:312 123 Functional}, we get
$$0 = \alpha H^3 + \beta H^2 + \gamma H + \delta,$$
with $\alpha,\beta,\gamma,\delta$ as in the statement.

Finally, to prove the that the generating functions for $\{123,231\}$-avoiding matchings and partitions are the same as for $\{123,312\}$-avoiding ones, it will suffice, by Eq.~\ref{eq:B2},
to show that
\begin{equation}\label{eq:231 312 Functional Equation Same}
\sum_{n\ge 0}\sum_{F\in\F_n} |\R_F(123,231)| v^{\val(F)} z^n = \sum_{n\ge 0}\sum_{F\in\F_n} |\R_F(123,312)| v^{\val(F)} z^n.
\end{equation}
Since $231$ and $312$ are inverses of each other, while $123$ is an involution, we have that $|\R_F(123,231)|= |\R_{F^t}(123,312)|$
for every $F\in\F_n$, where $F^t$ is the board obtained by reflecting $F$ over the line $y=x$.
Now Eq.~\eqref{eq:231 312 Functional Equation Same} follows using that $\val(F) = \val(F^t)$.
\end{proof}

\subsection{Equivalence class IV}

In order to state our results for the pair $\{123,321\}$, we need a few definitions.
 Let $D\in \D_n$ and let $h_0\ldots h_{2n}$ be its height sequence. We define the \emph{height} of $D$ to be $\max_{i} h_i$.
For $h\ge1$, denote by $\D_n^{<h}$ the set of paths in $\D_n$ whose height is strictly less than $h$, and by $\F_n^{<h}$ the set of Ferrers boards $F\in\F_n$ such that $D_F\in\D_n^{<h}$. Let $\D_n^{<h}=\bigcup_{n\ge0}\D^{<h}$.

The height of a peak $es$ formed by the $k$th and $(k+1)$st steps in $D$ is defined to be $h_k$.
Lastly, let $\eta(D)=|\{i: h_i = 2\}|$, and let $\eta(F)=\eta(D_F)$ for $F\in \F_n$.

\begin{theorem}\label{thm:123 321}
For all $F\in\F_n$ we have
$$|\M_F(123,321)| =\begin{cases}
 2^{\eta(F)} &\textrm{ if } F\in \F^{<5}_n,\\
 0&\textrm{otherwise.}
\end{cases}$$
Moreover,
$$\sum_{n\ge 0} |\M_n(123,321)|z^n =\frac{1-5z+2z^2}{1-6z+5z^2}, \quad\mbox{and so}\quad |\M_n(123,321)| = \frac{5^{n-1}+1}{2},$$
and
$$\sum_{n\leq 0} |\P_n(123,321)|z^n =\frac{1-10z+32z^2-37z^3+12z^4}{(1-z)(1-10z+31z^2-30z^3+z^4)}.$$
\end{theorem}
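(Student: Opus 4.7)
The plan is to establish the per-shape count first and then derive both generating functions by a weighted enumeration of Dyck paths of height at most~$4$.

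For the vanishing case, suppose $F\notin\F^{<5}_n$, so that some border vertex $V_i$ satisfies $h_i\ge 5$. By Lemma~\ref{lem:height_sequence}, every $(R,F)\in\R_F$ satisfies $|R\,\cap\,\Gamma(V_i)|=h_i\ge 5$, so by the Erd\H{o}s--Szekeres theorem the permutation determined by $R\,\cap\,\Gamma(V_i)$ contains either $123$ or $321$, forcing $\R_F(123,321)=\emptyset$. When $F\in\F^{<5}_n$, the key step is to invoke Fomin's growth diagram bijection (as in the proof of Theorem~\ref{thm:delta321}) between $\R_F$ and the set of $F$-oscillating tableaux $\{\lambda^i\}_{0\le i\le 2n}$. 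By Schensted's theorem, avoiding $123$ (respectively $321$) is equivalent to requiring the largest part of each $\lambda^i$ (respectively the number of parts of each $\lambda^i$) to be at most~$2$. Thus $\R_F(123,321)$ is in bijection with those $F$-oscillating tableaux in which each $\lambda^i$ fits inside a $2\times 2$ box. Since $|\lambda^i|=h_i$ and $h_i\le 4$, the shape $\lambda^i$ is uniquely determined whenever $h_i\in\{0,1,3,4\}$ (the only candidates being $\emptyset,(1),(2,1),(2,2)$ respectively), whereas for $h_i=2$ there are exactly two admissible shapes, $(2)$ and $(1,1)$. A direct case check confirms that these choices are mutually independent: both $(2)$ and $(1,1)$ differ by a single cell from the unique shape forced at each neighboring vertex (namely $(1)$ when the neighbor is at height~$1$, and $(2,1)$ when it is at height~$3$), so every combination of choices at the height-$2$ vertices yields a valid oscillating tableau. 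This gives $|\M_F(123,321)|=2^{\eta(F)}$.

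To derive the generating functions, I would sum the per-shape formula over all Ferrers boards to obtain
\[
A(v,z)=\sum_{n\ge 0}\sum_{F\in\F^{<5}_n}2^{\eta(F)}\,v^{\val(F)}z^n,
\]
a weighted enumeration of Dyck paths of height at most~$4$ in which each vertex at height~$2$ carries weight~$2$ and each valley carries weight~$v$. The plan is to compute $A(v,z)$ by the transfer matrix method on the path graph with vertex set $\{0,1,2,3,4\}$, using an enlarged state space that records the current height together with the direction of the last step so that valleys (south followed by east) can be tallied. Setting $v=1$ yields the matching generating function, which simplifies to $(1-5z+2z^2)/((1-z)(1-5z))$; a partial fraction decomposition then recovers $|\M_n(123,321)|=(5^{n-1}+1)/2$. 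The partition generating function follows from Eq.~\eqref{eq:B}: substitute $(v,z)\mapsto(1/z,z^2/(1-z)^2)$ in $A(v,z)$ and multiply by $1/(1-z)$.

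The conceptual content is entirely in the per-shape argument; the main obstacle in the generating function step is the bookkeeping required to enlarge the transfer matrix to track valleys and to simplify the resulting rational function into the stated form.
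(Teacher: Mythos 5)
Your proposal is correct and follows essentially the same route as the paper: the vanishing case via Erd\H{o}s--Szekeres is identical, and your count $2^{\eta(F)}$ via $F$-oscillating tableaux confined to a $2\times2$ box (two independent choices at each height-$2$ vertex) is the same argument the paper packages through $\delta_{321}$ and the set $\E_F^2$ of pairs of Dyck paths, since $\delta_{321}$ is itself defined by the growth-diagram correspondence. The only difference is bookkeeping in the final step --- you propose a transfer matrix on heights $\{0,\dots,4\}$ where the paper iterates the substitution $T(v,z\,T(v,uz\,T(v,uz\,T(v,z))))$ for bounded-height paths --- and both routinely yield the stated rational generating functions.
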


To prove this theorem we use the bijection $\delta_{321}: \R_F(321) \to \D^2_F$ defined in Section~\ref{sec:bij213}.
To describe the image of $\delta_{321}$ when restricted to $\R_F(123,321)$, we need one more definition.
Let $F\in \F_n^{<5}$, and let $h_0\ldots h_{2n}$ be the height sequence of $D_F$.
Define $\E_F^2$ to be the set of pairs $(D_0,D_F)\in \D_F^2$ where the height sequence $j_0\ldots j_{2n}$ of $D_0$ satisfies the following conditions:
if $h_i\in\{1,3\}$, then $j_i = 1$; if $h_i\in\{0,4\}$, then $j_i = 0$; and if $h_i = 2$, then $j_i\in\{0,2\}$.
Let $\E^2_n =  \bigcup_{F\in \F_n^{<5}} \E^2_F$.

\begin{lemma}\label{lem:power of two}
For any $F\in \D_n^{<5}$, $|\E_F^2| = 2^{\eta(F)}$.
\end{lemma}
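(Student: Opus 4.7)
The plan is to show that the $2^{\eta(F)}$ assignments come from independent binary choices at the positions where $h_i = 2$, and that every such choice produces a valid element of $\E_F^2$.

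First, I would observe that the defining conditions of $\E_F^2$ pin $j_i$ down to a single value whenever $h_i \ne 2$, while at the $\eta(F)$ positions where $h_i = 2$ they permit exactly the two values $j_i \in \{0, 2\}$. So there are at most $2^{\eta(F)}$ candidate sequences $j_0 \ldots j_{2n}$, parametrized by an arbitrary function $\{i : h_i = 2\} \to \{0, 2\}$. Conversely, every element of $\E_F^2$ evidently arises this way, so it suffices to show that \emph{each} such assignment yields a valid pair $(D_0, D_F) \in \D_F^2$.

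Next I would verify that any such $j$-sequence satisfies $|j_{i+1} - j_i| = 1$, which is what makes it the height sequence of a lattice path. Because $F \in \F_n^{<5}$, each $h_i$ lies in $\{0, 1, 2, 3, 4\}$, and $h_{i+1} = h_i \pm 1$ gives only the transitions $0\leftrightarrow 1$, $1 \leftrightarrow 2$, $2 \leftrightarrow 3$, $3 \leftrightarrow 4$. A direct inspection of these four cases shows that the prescribed $j$-values differ by exactly $\pm 1$ regardless of which option is taken at an $h = 2$ endpoint: the transitions $0 \leftrightarrow 1$ and $3 \leftrightarrow 4$ are forced to $0 \leftrightarrow 1$ and $1 \leftrightarrow 0$, while $1 \leftrightarrow 2$ and $2 \leftrightarrow 3$ pair $1$ with either $0$ or $2$. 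Moreover, since $h_{i+1} = h_i \pm 1$, two consecutive positions can never simultaneously have $h_i = 2$, so the binary choices at distinct $h = 2$ positions are genuinely independent and never clash through a shared neighbor constraint.

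It then remains to check the boundary and inequality conditions for membership in $\D_F^2$. We have $j_0 = 0 = j_{2n}$ automatically because $h_0 = h_{2n} = 0$; the values $j_i$ lie in $\{0, 1, 2\}$, so $j_i \ge 0$ and $D_0 \in \D_n$; and in every case $j_i \le h_i$ holds by inspection ($0 \le 0$, $1 \le 1$, $\{0,2\} \le 2$, $1 \le 3$, $0 \le 4$), so $D_0$ lies weakly below $D_F$. Combining the three points, the map sending a choice function $\{i : h_i = 2\} \to \{0,2\}$ to the resulting sequence $j$ is a bijection onto $\E_F^2$, proving $|\E_F^2| = 2^{\eta(F)}$.

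The only part that requires any care is the consistency of the transitions, but since $h$ is bounded by $4$ this reduces to the short finite case-check sketched above; there is no genuine obstacle.
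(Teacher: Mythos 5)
Your proof is correct and follows essentially the same route as the paper's: the conditions defining $\E_F^2$ determine $j_i$ whenever $h_i\neq 2$, and at each of the $\eta(F)$ positions with $h_i=2$ the neighboring values are forced to equal $1$, so both choices $j_i\in\{0,2\}$ yield a valid height sequence, giving $2^{\eta(F)}$ sequences in total. Your additional verifications (that consecutive $j$-values differ by $\pm 1$ in every case, that $j_0=j_{2n}=0$, and that $j_i\le h_i$ so $D_0$ lies weakly below $D_F$) are details the paper leaves implicit, but the argument is the same.
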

\begin{proof}
Let $h_0\ldots h_{2n}$ be the height sequence of $D_F$. If $D_0$ is such that $(D_0,D_F)\in\E_F^2$, and $j_0\ldots j_{2n}$ is its height sequence,
then for each $i$ such that $h_i\neq 2$, the value of $j_i$ is determined by $h_i$.
Let now $i$ be such that $h_i=2$. Then $h_{i-1},h_{i+1}\in\{1,3\}$, which forces $j_{i-1}=j_{i+1}=1$, and so both choices of $j_i\in\{0,2\}$ are valid. It follows that
the number of choices of the height sequence $j_0\ldots j_{2n}$ is $2^{\eta(F)}$.
\end{proof}

The next lemma describes the image of the bijection $\delta_{321}$ on the set $\R_F(123,321)$.
\begin{lemma}\label{lem:image set 321 123}
If $F\in\F_n\setminus\F_n^{<5}$, then $\R_F(123,321)=\emptyset$. If $F\in F_n^{<5}$, then
$\delta_{321}$ restricts to a bijection between $\R_F(123,321)$ and $\E_F^2$.
\end{lemma}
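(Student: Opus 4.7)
The plan is to use the fact (Theorem~\ref{thm:delta321}) that $\delta_{321}$ is already a bijection between $\R_F(321)$ and $\D_F^2$, and then to check that under this bijection $123$-avoidance on the placement side corresponds precisely to membership in $\E_F^2$ on the path side, while also ruling out the existence of any $\{123,321\}$-avoiding placement when $F\not\in\F_n^{<5}$.

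First I would dispose of the case $F\in\F_n\setminus\F_n^{<5}$. By definition there exists some $i$ with $h_i\geq 5$, and by Lemma~\ref{lem:height_sequence} we then have $|R\cap\Gamma(V_i)|\geq 5$ for any $(R,F)\in\R_F$. The classical Erd\H{o}s--Szekeres theorem forces any permutation on at least $5$ letters to contain either an increasing or a decreasing subsequence of length $3$, so $R\cap\Gamma(V_i)$ contains a $123$ or a $321$ pattern; thus $\R_F(123,321)=\emptyset$.

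Next, assume $F\in\F_n^{<5}$ and $(R,F)\in\R_F(123,321)$. Because $R\cap\Gamma(V_i)$ avoids $123$, the longest increasing subsequence $\ell_i$ satisfies $\ell_i\leq 2$. I would then read off $j_i=2\ell_i-h_i$ for each value of $h_i\in\{0,1,2,3,4\}$: when $h_i=0$ clearly $\ell_i=0$ and $j_i=0$; when $h_i=1$ trivially $\ell_i=1$ so $j_i=1$; when $h_i=2$ the restriction is either $12$ or $21$, giving $j_i\in\{0,2\}$; when $h_i=3$ the Erd\H{o}s--Szekeres bound combined with $\ell_i\leq 2$ forces $\ell_i=2$, hence $j_i=1$; and when $h_i=4$ the same bound together with $123$- and $321$-avoidance forces both LIS and LDS to equal $2$, so $\ell_i=2$ and $j_i=0$. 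This shows $\delta_{321}(\R_F(123,321))\subseteq\E_F^2$.

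For surjectivity, let $(D_0,D_F)\in\E_F^2$ and let $(R,F)\in\R_F(321)$ be its unique preimage under $\delta_{321}$, which exists by Theorem~\ref{thm:delta321}. Writing $\ell_i=(h_i+j_i)/2$, the defining conditions of $\E_F^2$ together with $F\in\F_n^{<5}$ give $\ell_i\leq 2$ at every vertex $V_i$, so the permutation determined by $R\cap\Gamma(V_i)$ has no increasing subsequence of length~$3$, i.e.\ $(R,F)$ avoids $123$. Since $\delta_{321}$ is already a bijection on $\R_F(321)$, injectivity on the smaller set $\R_F(123,321)$ is automatic, and the argument above shows surjectivity onto $\E_F^2$. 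The main delicate step is the case analysis identifying which values of $\ell_i$ are compatible with simultaneously avoiding $123$ and $321$ at each $\Gamma(V_i)$; everything else is then a direct consequence of Theorem~\ref{thm:delta321}.
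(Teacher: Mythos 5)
Your proposal is correct and follows essentially the same route as the paper: Erd\H{o}s--Szekeres rules out boards of height at least $5$, the identity $j_i=2\ell_i-h_i$ together with $\ell_i\le 2$ places the image inside $\E_F^2$, and inverting via $\ell_i=(h_i+j_i)/2\le 2$ gives the reverse inclusion. The only cosmetic difference is that your case analysis for $h_i\in\{3,4\}$ re-invokes the LIS/LDS bounds directly, where the paper instead reuses the inequality $0\le j_i\le h_i$ already established in Lemma~\ref{lem:2}.
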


\begin{proof}
Let $F\in\F_n$, let $V_0,\ldots,V_{2n}$ be the vertices along the border of $F$, and let $h_0\ldots h_{2n}$ be the height sequence of $D_F$.

Consider first the case that $F\notin\F_n^{<5}$. Suppose for contradiction that $\R_F(123,321)\neq\emptyset$, and let $R\in\R_F(321,123)$. Then, by Lemma~\ref{lem:height_sequence}, $h_i =|R \,\cap\, \Gamma(V_i)|$ for every $i$.
By the Erd\"os-Szekeres theorem, any permutation of length at least $5$ must have either an increasing or decreasing subsequence of length at least $3$.
Thus, if $i$ is such that $h_i\geq 5$, the permutation given by $R\,\cap\,\Gamma(V_i)$ contains an occurrence of $123$ or $321$, contradicting that $R\in R_F(123,321)$.

Now consider the case that $F\in \F_n^{<5}$. Recall from Section~\ref{sec:bij213} that for $(R,F)\in\R_F(123,321)$, the bottom path in $\delta_{321}(R,F)$ is defined as the one with height sequence $j_i = 2\ell_i - h_i$, where $\ell_i$ is the length of a longest increasing sequence in $R \,\cap\, \Gamma(V_i)$.
From this definition of $j_i$ and the fact that $0\leq j_i\leq h_i$ and $\ell_i<3$ for all $i$, it follows easily that  $j_i = 1$ when $h_i\in\{1,3\}$, $j_i = 0$ when $h_i\in\{0,4\}$, and $j_i\in\{0,2\}$ when $h_i = 2$.
This shows that $\delta_{321}(R,F) \in\E_F^2$, proving the inclusion $\delta_{321}(\R_F(123,321))\subseteq\E_F^2$.

To establish the reverse inclusion, let $D_0$ be any path such that $(D_0,D_F) \in \E_F^2$, and let $c_0\ldots c_{2n}$ be its height sequence.
Let $(R_0,F)=\delta_{321}^{-1}(D_0,D_F)\in \R_F(321)$, which exists because $\delta_{321}$ is a bijection between $\R_F(321)$ and $\D^2_F$.
Letting $\ell_i$ be the length of a longest increasing sequence in $R_0 \,\cap\, \Gamma(V_i)$, we know be definition of $\delta_{321}$ that $c_i=2\ell_i-h_i$. Thus, $\ell_i=(c_i+h_i)/2<3$,
where the last inequality follows from the fact that $(D_0,D_F) \in \E_F^2$. We conclude that $(R_0,F)\in \R_F(123,321)$.
\end{proof}

\begin{proof}[Proof of Theorem~\ref{thm:123 321}]
It follows from Lemmas~\ref{lem:power of two} and~\ref{lem:image set 321 123} that
\begin{equation}\label{eqn:321,123}
 |\R_F(123,321)|=|\E^2_F|=\begin{cases}
 2^{\eta(F)} &\textrm{ if } F\in \F^{<5}_n,\\
 0&\textrm{otherwise.}
\end{cases}
\end{equation}
This implies that
$$A_4(v,z) := \sum_{n\ge 0} \sum_{F\in\F_n} |\R_F(123,321)| v^{\val(F)}z^n = \sum_{n\ge 0} \sum_{D\in \D^{<5}_n}2^{\eta(D)}v^{\val(D)}z^n.$$
If we let
$$Q(u,v,z)=\sum_{n\ge 0} \sum_{D\in \D^{<5}_n} u^{\eta(D)}v^{\val(D)}z^n,$$
then $\sum_{n\ge 0} |\M_n(123,321)|z^n =Q(2,1,z)$ and $A_4(v,z)=Q(2,v,z)$, from where
$\sum_{n\leq 0} |\P_n(123,321)|z^n$ can be found using Eq.~\ref{eq:B2}.
Thus, it suffices to find an expression for $Q(u,v,z)$.

In the following generating functions for Dyck paths, $u$ marks the statistic $\eta$, and $v$ marks the statistic $\val$.
Let $$T(v,z)=\frac{1/v}{1-vz}-\frac{1}{v}+1$$ be the generating function for $\D^{<2}$. Since paths in $\D^{<3}$
can be obtained from paths in $\D^{<2}$ by inserting a path in $\D^{<2}$ at each peak, the generating function for $\D^{<3}$ is
$T(v,z\,T(v,uz))$. Similarly, $T(v,z\,T(v,uz\,T(v,uz)))$ is the generating function for $\D^{<4}$, obtained from paths in $\D^{<3}$ by inserting a path in $\D^{<2}$ at each peak (see Figure~\ref{fig:Dyck path height construction}).
Finally, inserting path in $\D^{<2}$ at the peaks of paths in $\D^{<4}$, we have
$$Q(u,v,z)=T(v,z\,T(v,uz\,T(v,uz\,T(v,z)))),$$
from where we obtain the stated generating functions for $\{123,321\}$-avoiding matchings and set partitions.
\end{proof}

\begin{figure}[ht]
\begin{center}
\begin{tikzpicture}[scale=0.4]
	\def\U{-- ++(1,0) [fill] circle(1.3pt)}
	\def\D{-- ++(0,-1) [fill] circle(1.3pt)}
	
\begin{scope}[shift = {(0,0)}]	
	\draw (0,0) -- (0,7);
	\draw (0,0) -- (7,0);
	\draw (0,6) circle(1.3pt) \U\U\D\U\D\D\U\D\U\U\D\D;
	\draw (0,6) -- (6,0);
\end{scope}

\begin{scope}[shift = {(8.5,4.5)}]
	\draw (0,0) -- (0,3);
	\draw (0,0) -- (3,0);
	\draw (0,2) circle(1.3pt) \U\D\U\D;
	\filldraw[fill=gray] (0,2) -- (1,2) -- (1,1) -- (2,1) -- (2,0) -- (0,2);
	\draw[->] (-.1,-.1) -- (-2,-2);
\end{scope}

\begin{scope}[shift = {(14,0)}]
	
	\draw (0,0) -- (0,9);
	\draw (0,0) -- (9,0);
	\draw (0,8) circle(1.3pt) \U\U\D\U\D\D\U\D\U\U\U\D\U\D\D\D;
	\draw (0,8) -- (8,0);
	\filldraw[fill=gray] (6,4) -- (7,4) -- (7,3) -- (8,3) -- (8,2) -- (6,4);
\end{scope}
\end{tikzpicture}
\caption{An example of the construction of a path in $\D^{<4}$ by inserting a path in $\D^{<2}$ at a peak of a path in $\D^{<3}$.}
\label{fig:Dyck path height construction}
\end{center}
\end{figure}
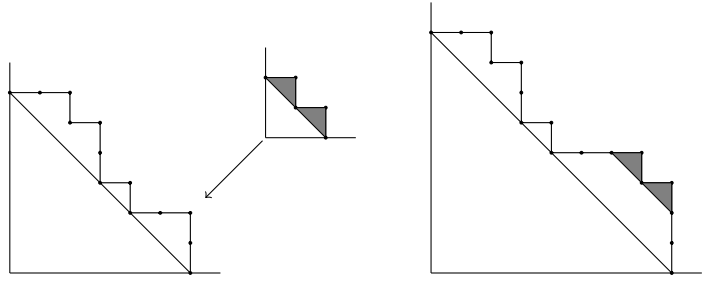

\subsection{Equivalence class V}

In this section we give a functional equation for the generating function for $\{213,321\}$-avoiding matchings, with two additional variables.
Unlike in the previous sections, we are unable to solve this equation to obtain an expression for the generating function.
We also do not give results on the enumeration of $\{213,321\}$-avoiding partitions.

\begin{theorem}\label{thm:213 321}
The generating function for $\{213,321\}$-avoiding matchings is given by $$\sum_{n\ge 0} |\M_n(213,321)|z^{2n}=G(0,0,z),$$ where $G(t,u,z)$ is the solution to the functional equation
\begin{multline*}
G(t,u,z) = 1 + z\Bigg(t\,G(t,u,z) +\frac{G(t,u,z)-G(t,0,z)}{tu}+\frac{G(t,0,z)- G(0,0,z)}{t}
\\
 + \frac{tu}{1-u}\,(G(t,0,z) - G(tu,0,z)) \Bigg).
\end{multline*}
\end{theorem}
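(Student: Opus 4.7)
The plan is to derive the functional equation by scanning each $\{213,321\}$-avoiding matching left to right and appending one vertex at a time, with $G(t,u,z)$ refining the enumeration by two catalytic statistics $t$ and $u$ on the configuration of currently open arcs. Each variable $z$ will correspond to one appended vertex, the constant $1$ on the right-hand side will record the empty matching, and the specialization $G(0,0,z)$ will extract those partial matchings with no open arcs remaining, i.e., the complete matchings counted by $\sum_{n\ge 0}|\M_n(213,321)|z^{2n}$.

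First I would use the $321$-avoidance to observe that at every intermediate stage the set of open arcs decomposes canonically into at most two nesting chains, which I will call the \emph{outer} chain and the \emph{inner} chain. This is the matching analogue of the fact that $321$-avoiding permutations are the union of two increasing sequences, and it is precisely the two-row restriction of the oscillating tableaux underlying the bijection $\delta_{321}$ of Section~\ref{sec:123}. With this in hand I would take $t$ to mark the total number of currently open arcs and $u$ to mark the depth at which the inner chain is nested inside the outer chain, with the convention that $u=0$ means the inner chain is empty. The $213$-avoidance will then constrain which closures are legal and at which depth a fresh inner chain can be started.

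Next I would enumerate the single-vertex extensions that preserve $\{213,321\}$-avoidance and show that they fall into four types, matching the four summands of the right-hand side. Appending an opener that extends the outer chain multiplies the state by $t$ and contributes $ztG(t,u,z)$. Closing the innermost inner arc requires $u\ge 1$ and decreases both $t$ and $u$ by one, contributing $z(G(t,u,z)-G(t,0,z))/(tu)$. Closing the innermost outer arc is only allowed when the inner chain is empty, hence acts only on the slice $u=0$ and decreases $t$ by one, contributing $z(G(t,0,z)-G(0,0,z))/t$. Finally, opening a new inner arc can only be performed from a pure-outer state with $a\ge 1$ open arcs, and $213$-avoidance forces the new arc to be nested inside some chosen suffix of length $j\in\{1,\dots,a\}$ of the outer chain; expanding
\[
\frac{ztu}{1-u}\bigl(G(t,0,z)-G(tu,0,z)\bigr)=z\sum_{a\ge 1}[t^a]G(t,0,z)\sum_{j=1}^{a}t^{a+1}u^{j}
\]
confirms that this is the operator producing the $a$ new states $(t^{a+1},u^{j})$, one for each allowed depth. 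Summing the four contributions with the empty-matching constant yields the claimed functional equation.

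The main obstacle will be the last case. One must show that the only permissible way to open a new inner chain is from a pure-outer state, so that the prefactor is $G(t,0,z)$ rather than $G(t,u,z)$, and that the number of valid such moves from an outer chain of size $a$ is exactly $a$. Both statements should follow from a careful analysis of the forbidden configurations in Figure~\ref{fig:arcpatterns}: a $213$ pattern is created exactly when a new inner arc is nested too shallowly relative to an existing inner arc, and a $321$ pattern is created exactly when three mutually nested openers accumulate, so that from a mixed state every would-be inner-opener move already produces a forbidden pattern upon closure. Once this case analysis is complete, the equation follows, and evaluation at $t=u=0$ gives the stated generating function.
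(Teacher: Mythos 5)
You verify the algebra of the equation correctly (in particular the expansion of $\frac{tu}{1-u}\bigl(G(t,0,z)-G(tu,0,z)\bigr)$ as the operator $t^a\mapsto\sum_{j=1}^a t^{a+1}u^j$), and your four transition types do match the four summands. But the entire content of the theorem is the combinatorial model behind those transitions, and there your argument has a genuine gap. You posit that the prefix of a $\{213,321\}$-avoiding matching determines a state (number of open arcs, ``depth of the inner chain''). First, the decomposition of the open arcs into two nesting chains is not a function of the prefix: whether two open arcs nest or cross is decided by the order of their closers, which has not yet been read, so ``the inner chain'' is undefined at an intermediate stage, and you never supply the ``canonical'' rule you invoke. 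Second, $u$ is used inconsistently: move 4 creates a single inner arc and sets $u=j$ (a depth), while move 2 removes one inner arc and decrements $u$ by one (a cardinality); after a move-4 step with $j\ge 2$ there is one inner arc but $j$ forced inner closures. Third, and concretely, the model already fails at $n=2$ on the crossing $\{(1,3),(2,4)\}$: its two arcs cross, hence lie in different nesting chains, so under your semantics the second opener must be a move-4 step and the state has $u\ge1$; but then your rules only allow closing the inner arc first, whereas in this matching the first-opened arc closes first. The walk count for $n=2$ still comes out to $3$, which shows the walks are right but your dictionary between walks and matchings is not a bijection as described.

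The paper avoids all of this by not scanning the matching at all: it first characterizes $\delta_{213}(\R_F(213,321))$ as the set $\A^2_F$ of pairs $(D_0,D_F)$ in which $D_0$ has no peak strictly southwest of any vertex of $D_F$ (Lemma~\ref{lem:A2}), and then runs the step-by-step recursion on pairs of lattice paths in $\B^2_n$, with $t$ marking the height of the endpoint of $L_1$ and $u$ the vertical gap between the endpoints of $L_1$ and $L_0$; there the four transitions are immediate from the defining conditions of $\B^2_n$. Pulled back to the matching, the fourth move opens an arc that is nested inside \emph{all} currently open arcs and is closed by the very next closer, and the parameter $\ell$ records how many of the old open arcs close before the first arc opened after it does --- a statistic about future closers, which is precisely why no prefix-computable state can carry it. To make your route work you would either need to prove the analogue of Lemma~\ref{lem:A2} and argue on the path pairs (i.e., reconstruct the paper's proof), or design a genuinely different, prefix-computable second catalytic statistic and prove the transition rules for it; neither is done here.
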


To prove this theorem we use the bijection $\delta_{213}:\R_F(213)\rightarrow \D_F^2$ defined in Section~\ref{sec:bij213}. To describe its image when restricted to $\R_F(213,321)$, we define the following set.
For any $F\in \F_n$, denote by $\A^2_F$  the set of all pairs $(D_0,D_F)\in\D_F^2$ such that if $V$ is a vertex on the top path $D_F$,
then $D_0$ has no peak that lies strictly south and strictly west of $V$. As usual, we let $\A_n^2 = \bigcup_{F\in\F_n} \A_F^2$. Figure~\ref{fig:213_321 example} illustrates this definition.

\begin{figure}[h!]
\begin{center}
\begin{tikzpicture}[scale=0.4]

\begin{scope}[shift={(0,0)}]
\draw[fill=gray!20] (0,0) rectangle (2,8);
\draw[fill=gray!20] (2,0) rectangle (5,6);
\draw[fill=gray!20] (5,0) rectangle (8,4);

\draw (0,0) -- (0,8);
\draw (1,0) -- (1,8);
\draw (2,0) -- (2,8);
\draw (3,0) -- (3,8);
\draw (4,0) -- (4,7);
\draw (5,0) -- (5,6);
\draw (6,0) -- (6,5);
\draw (7,0) -- (7,5);
\draw (8,0) -- (8,4);

\draw (0,0) -- (8,0);
\draw (0,1) -- (8,1);
\draw (0,2) -- (8,2);
\draw (0,3) -- (8,3);
\draw (0,4) -- (8,4);
\draw (0,5) -- (7,5);
\draw (0,6) -- (5,6);
\draw (0,7) -- (4,7);
\draw (0,8) -- (3,8);

\def\R{-- ++(1,0) [fill] circle(1.3pt)}
\def\D{-- ++(0,-1) [fill] circle(1.3pt)}
\draw(0,8) circle(1.3pt)\R\R\R\D\R\D\R\D\R\R\D\R\D\D\D\D;
\draw (0,8) circle(1.3pt)\R\R\D\D\R\R\R\D\D\R\R\R\D\D\D\D;

\node at (-2.4,4.5) {$F_0$};
\draw[->](-1.8,4.7) -- (0.5,5.5);

\node at (8.5,6.7) {$F$};
\draw[->](8,6.3) -- (6.5,4.5);
\node at (1.5,7.5){$\times$};
\node at (0.5,6.5){$\times$};
\node at (4.5,5.5){$\times$};
\node at (3.5,4.5){$\times$};
\node at (7.5,3.5){$\times$};
\node at (6.5,2.5){$\times$};
\node at (5.5,1.5){$\times$};
\node at (2.5,0.5){$\times$};
\end{scope}

\begin{scope}[shift={(18,0)}]
\draw[fill=gray!20] (0,0) rectangle (2,8);
\draw[fill=gray!20] (0,0) rectangle (4,5);
\draw[fill=gray!20] (0,0) rectangle (8,4);
\draw[fill=gray!20] (0,0) rectangle (3,6);

\draw (0,0) -- (0,8);
\draw (1,0) -- (1,8);
\draw (2,0) -- (2,8);
\draw (3,0) -- (3,8);
\draw (4,0) -- (4,7);
\draw (5,0) -- (5,6);
\draw (6,0) -- (6,5);
\draw (7,0) -- (7,5);
\draw (8,0) -- (8,4);

\draw (0,0) -- (8,0);
\draw (0,1) -- (8,1);
\draw (0,2) -- (8,2);
\draw (0,3) -- (8,3);
\draw (0,4) -- (8,4);
\draw (0,5) -- (7,5);
\draw (0,6) -- (5,6);
\draw (0,7) -- (4,7);
\draw (0,8) -- (3,8);

\def\R{-- ++(1,0) [fill] circle(1.3pt)}
\def\D{-- ++(0,-1) [fill] circle(1.3pt)}
\draw(0,8) circle(1.3pt)\R\R\R\D\R\D\R\D\R\R\D\R\D\D\D\D;
\draw (0,8) circle(1.3pt)\R\R\D\D\R\D\R\D\R\R\R\R\D\D\D\D;

\node at (-2.4,4.5) {$F'_0$};
\draw[->](-1.8,4.7) -- (0.5,5.5);

\node at (8.5,6.7) {$F$};
\draw[->](8,6.3) -- (6.5,4.5);
\node at (1.5,7.5){$\times$};
\node at (0.5,6.5){$\times$};
\node at (2.5,5.5){$\times$};
\node at (3.5,4.5){$\times$};
\node at (7.5,3.5){$\times$};
\node at (6.5,2.5){$\times$};
\node at (5.5,1.5){$\times$};
\node at (4.5,0.5){$\times$};
\end{scope}
\end{tikzpicture}
\caption{On the left, the two Ferrers boards $F_0$ (shaded) and $F$ satisfy $(D_{F_0}, D_{F})\in \A_F^2$. On the right, $F_0'$ (shaded) and $F$ satisfy $(D_{F'_0}, D_{F})\notin \A_F^2$.   }
\label{fig:213_321 example}
\end{center}
\end{figure}
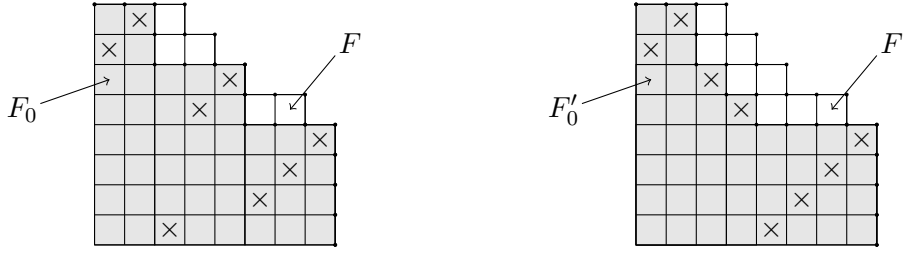

\begin{lemma}\label{lem:A2}
For $F\in\F_n$, the bijection $\delta_{213}$ restricts to a bijection between $\R_{F}(213,321)$ and  $\A^2_F$.
\end{lemma}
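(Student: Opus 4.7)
The plan is to exploit the fact that $\delta_{213}:\R_F(213)\to \D_F^2$ is already known to be a bijection, so the lemma reduces to showing that, for $(R,F)\in\R_F(213)$, the placement $R$ is $321$-avoiding if and only if $(D_{F_R},D_F)\in\A_F^2$. The structural observation that makes everything work is that the peaks of $D_{F_R}$ are exactly the points $(c,\pi_R(c))$ where $c$ is a right-to-left maximum of $\pi_R$ (i.e., $\pi_R(c)>\pi_R(c')$ for all $c'>c$); this is immediate from the description of $F_R$ as the minimal Ferrers board containing $R$, which forces column $c$ of $F_R$ to have height $\max_{c'\ge c}\pi_R(c')$.

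For the ``if'' direction I would argue by contrapositive: suppose some peak $(c,\pi_R(c))$ of $D_{F_R}$ lies strictly SW of a vertex $V'=(a',b')$ of $D_F$, so $c<a'$ and $\pi_R(c)<b'$, and I will exhibit a $321$ pattern inside $R\cap\Gamma(V')$. The right-to-left maximum property guarantees that every column $c'\in(c,a']$ contains a rook at some row $<\pi_R(c)\le b'$, and dually every row $r\in(\pi_R(c),b']$ has its rook at some column $<c$ (else that rook would have row $>\pi_R(c)$ at a column $>c$, violating the right-to-left maximum at $c$). Picking any such rook $(c_1,r_1)$ with $r_1>\pi_R(c)$ and any such rook $(c_3,r_3)$ with $c_3>c$—both of which exist because $b'>\pi_R(c)$ and $a'>c$—the triple $(c_1,r_1),(c,\pi_R(c)),(c_3,r_3)$ is a $321$ pattern lying entirely in $\Gamma(V')$.

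For the converse, assume $R\cap\Gamma(V')$ contains a $321$ pattern $(c_1,r_1),(c_2,r_2),(c_3,r_3)$ with $c_1<c_2<c_3\le a'$ and $r_3<r_2<r_1\le b'$, and I need to produce a peak of $D_{F_R}$ strictly SW of $V'$. The idea is to shift the middle element of the $321$ pattern to the right as far as possible: let $c^*\in[c_2,c_3)$ maximize $\pi_R(c^*)$ subject to $\pi_R(c^*)\ge r_2$ (the set is nonempty as it contains $c_2$). Then $213$-avoidance applied to the triple $(c_1,c_2,c^*)$ forces $\pi_R(c^*)<r_1$, and I claim $c^*$ is a right-to-left maximum of $\pi_R$: for $c'\in(c^*,c_3)$ this follows from the maximal choice of $c^*$; for $c'=c_3$ from $\pi_R(c_3)=r_3<r_2\le\pi_R(c^*)$; and for $c'>c_3$, any $\pi_R(c')>\pi_R(c^*)$ would make $(c^*,c_3,c')$ a $213$ pattern (values $\pi_R(c^*),r_3,\pi_R(c')$ in the relative order $2,1,3$), a contradiction. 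Hence $(c^*,\pi_R(c^*))$ is a peak of $D_{F_R}$, and it lies strictly SW of $V'$ since $c^*<c_3\le a'$ and $\pi_R(c^*)<r_1\le b'$.

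The main subtlety is the converse direction: the ``obvious'' middle rook $(c_2,r_2)$ of the $321$ pattern need not itself correspond to a peak of $D_{F_R}$, so the right peak has to be identified. The shifting argument above pinpoints it, and it is precisely here that $213$-avoidance plays its crucial dual role—bounding $\pi_R(c^*)$ away from $r_1$ on the one hand, and forbidding any larger values to the right of $c_3$ on the other.
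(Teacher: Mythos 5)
Your proposal is correct. The overall architecture coincides with the paper's: both reduce the lemma, via the known bijection $\delta_{213}:\R_F(213)\to\D_F^2$, to showing that $R$ contains a $321$ inside some $\Gamma(V)$ if and only if the minimal board $F_R$ has a peak strictly southwest of some vertex of $D_F$, and both rest on the observation that the peaks of $F_R$ carry rooks (your phrasing: they are exactly the points $(c,\pi_R(c))$ with $c$ a right-to-left maximum of $\pi_R$). Your argument for the direction ``a peak strictly SW of $V'$ forces a $321$ in $\Gamma(V')$'' is essentially the paper's: the paper takes the specific rooks in the top row and rightmost column of $\Gamma(V')$, where you allow any rook above the peak and any rook to its right. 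Where you genuinely diverge is the converse. The paper proves it directly: if $(D_{F_R},D_F)\in\A_F^2$, then at most two peaks of $F_R$ lie in any $\Gamma(V)$, so $R\cap\Gamma(V)$ is a union of two increasing sequences (each peak's subrectangle is increasing by $213$-avoidance) and therefore cannot contain $321$. You argue the contrapositive, extracting from a $321$ occurrence an explicit right-to-left maximum $c^*$ by an extremal choice and two applications of $213$-avoidance. The paper's version is shorter and exposes the ``union of two chains'' structure; yours has the merit of locating the offending peak constructively, and the extremal choice of $c^*$ is exactly the right fix for the genuine subtlety you identify (the middle rook of the $321$ need not sit at a peak). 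One point worth making explicit in a final write-up: your two uses of $213$-avoidance are applied to the permutation $\pi_R$ rather than to the placement; this is legitimate because $213$ ends in its largest entry, so, as the paper notes before Theorem~\ref{thm:delta213}, $(R,F)\in\R_F(213)$ already implies $\pi_R\in\S_n(213)$.
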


\begin{proof}
Let $(R,F)\in\R_F(213)$ and $\delta_{213}(R,F)=(D_0,D_F)\in\D^2_F$. Let $F_0\in \F_n$ be such that $D_{F_0}= D_0$.  Figure~\ref{fig:213_321 example} gives an example of this bijection both when $(D_0,D_F)\in \A^2_F$ (left) and when $(D_0,D_F)\notin \A^2_F$ (right).
Recall that, by definition of $\delta_{213}$, $F_0$ is the smallest Ferrers board that contains $R$. Consequently, if $(x,y)$ is a peak on $F_0$, then the unit square whose northeast vertex is $(x,y)$ contains a rook of $R$.  Furthermore, $R\, \cap\, \Gamma(x-1,y-1)$ is strictly increasing since $(R,F)\in\R_F(213)$.
We will show that $(R,F)\in\R_F(213,321)$ if and only if $(D_0,D_F)\in\A^2_F$.

First suppose that $(D_0,D_F)\notin \A^2_F$. Then there exists a vertex $V=(a,b)$ on the border of $F$ and a peak $(x,y)$ on $F_0$ such that $x<a$ and $y<b$.
The rook at this peak (i.e., in row $x-1$ and column $y-1$), together with the rook in row $b-1$ and the rook in column $a-1$, creates an occurrence of $321$ in $R\,\cap\,\Gamma(V)$,
and so $(R,F)\notin \R_F(213,321)$.

Now suppose that $(D_0,D_F)\in\A^2_F$. Then, for any vertex  $V=(a,b)$ on the border of $F$, the board $F_0$ has at most two peaks in the region $x\leq a$ and $y\leq b$.
It follows that the placement $R\,\cap\,\Gamma(V)$ is the union of two increasing sequence, so it avoids $321$. Therefore, $(R,F)\in\R_F(213,321)$.
\end{proof}

\begin{proof}[Proof of Theorem~\ref{thm:213 321}]
By Lemma~\ref{lem:A2},
$$\sum_{n\ge 0} |\M_n(213,321)|z^n = \sum_{n\geq 0} |\R_n(213,321)|z^n = \sum_{n\geq 0} |\A^2_n|z^n.$$
To enumerate $\A_n^2$, we will consider an larger set where we do not require paths to end on the diagonal.
It will also be convenient to shift the paths down by $n$ units.
More precisely, define $\B^2_n$ to be the set of pairs of lattice paths $(L_0,L_1)$ with steps south and east that start at the origin, remain above the line $y=-x$, and satisfy the following conditions:
\begin{itemize}
\item $L_1$ has a total of $n$ steps,
\item $L_0$ is weakly below $L_1$,
\item both paths end at the same $x$-coordinate,
\item $L_0$ has no peak strictly southwest of any vertex on $L_1$,
\item $L_0$ ends with a south step only if $L_1$ and $L_0$ end at the same $y$-coordinate.
\end{itemize}
Two examples of pairs in $\B^2_n$ are given in Figure~\ref{fig:marked board}.
Note that one may think of $\A^2_n$ as a subset of $ \B^2_{2n}$ by sliding the paths in $\A^2_n$ down by $n$ units.

\begin{figure}[htb]
\begin{center}
\begin{tikzpicture}[scale=0.4]
\begin{scope}[shift={(0,0)}]
\def\R{-- ++(1,0) [fill] circle(1.3pt)}
\def\D{-- ++(0,-1) [fill] circle(1.3pt)}
\draw(0.1,8.1) circle(1.3pt)\R\R\R\D\R\D\R\D\R\R\D\R\D\D;
\draw (0,8) circle(1.3pt)\R\R\D\D\R\R\R\D\D\R\R\D\D\D\R;
\end{scope}

\begin{scope}[shift={(14,0)}]
\def\R{-- ++(1,0) [fill] circle(1.3pt)}
\def\D{-- ++(0,-1) [fill] circle(1.3pt)}
\draw(0.1,8.1) circle(1.3pt)\R\R\R\D\R\D\R\D\R\R\D\R\D;
\draw (0,8) circle(1.3pt)      \R\R\D\D\R\R\R\D\D\R\R\R\D;
\end{scope}
\end{tikzpicture}
\caption{The example on the left is a pair in $\B_n^2$ where the bottom path ends with an east step.  In the example on the right the bottom path ends in a south step.}
\label{fig:marked board}
\end{center}
\end{figure}
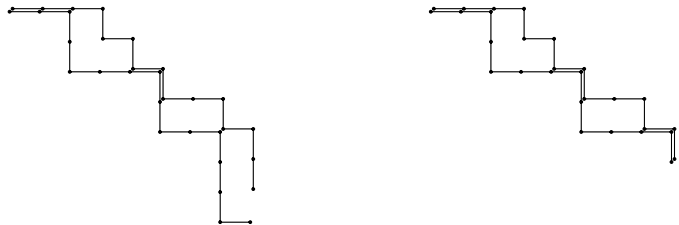

Define the following statistics on pairs $(L_0,L_1)\in \B^2_n$. If $V=(a,b)$ and $W=(a,d)$ are final vertices of $L_1$ and $L_0$, respectively, let $\hgt(L_1)= a+b$ and $\epsilon(L_0,L_1) = b-d$.
Consider the generating function
$$G(t,u,z) = \sum_{n\geq 0} \sum_{(L_0,L_1)\in\B^2_n} t^{\hgt(L_1)} u^{\epsilon(L_0,L_1)} z^n   = \sum_{i,j\geq 0}t^i u^jG_{i,j}(z),$$
and note that $G(0,0,z) = \sum_{n\geq 0} |\A^2_n|z^{2n}$.

To find a functional equation for $G$, consider, given $(L_0,L_1)\in \B^2_n$, all the ways that we may append steps at the end of $L_0$ and $L_1$ to obtain a pair in $\B^2_{n+1}$.  It follows from the definitions that we have the following options:
\begin{itemize}
\item[(a)] Add an east step to both $L_0$ and $L_1$.
\item[(b)] If $\epsilon(L_0,L_1)>0$, we may also add a south step to $L_1$ and leave $L_0$ unchanged.
\item[(c)] If $\epsilon(L_0,L_1)=0$ and $\hgt(L_1)>0$, we may also add either a south step to both $L_0$ and $L_1$, or one east step to $L_1$ and $\ell$ south steps followed by one east step to $L_0$, where $1\leq \ell\leq \hgt(L_1)$.
\end{itemize}
A routine calculation translates these rules into the following functional equation:
\begin{multline*}
G(t,u,z) = 1 + z\Bigg(t\,G(t,u,z) +\frac{G(t,u,z)-G(t,0,z)}{tu}+\frac{G(t,0,z)- G(0,0,z)}{t}\\
+ t\,\sum_{i>0} t^i(u+u^2+\ldots + u^i)G_{i,0}(z) \Bigg),
\end{multline*}
which yields the one in the statement after simplifying.
\end{proof}

\end{document}